\documentclass{amsart}
\usepackage{amsfonts}
\usepackage{hyperref}

\setcounter{MaxMatrixCols}{10}

\newtheorem{theorem}{Theorem}
\theoremstyle{plain}

\newtheorem{conjecture}{Conjecture}
\newtheorem{corollary}{Corollary}

\newtheorem{lemma}{Lemma}

\newtheorem{proposition}{Proposition}
\newtheorem{remark}{Remark}

\numberwithin{equation}{section}
\input{tcilatex}

\begin{document}
\title[Accurate approximations for the elliptic integral of the second kind]{%
Very accurate approximations for the elliptic integrals of the second kind
in terms of Stolarsky means}
\author{Zhen-Hang Yang}
\email{yzhkm@163.com}
\address{Customer Service Center, ZPEPC Electric Power Research Institute,
Hangzhou, Zhejiang, China, 310009}
\date{May 10, 2015}
\subjclass[2010]{Primary 33E05, 26D15; Secondary 26E60, 26A48}
\keywords{Complete elliptic integrals for the second kind, Stolarsky means,
monotonicity, inequality}
\dedicatory{Dedicated to the my mother Ru-Yi Jiang}
\thanks{This paper is in final form and no version of it will be submitted
for publication elsewhere.}

\begin{abstract}
For $a,b>0$ with $a\neq b$, the Stolarsky means are defined by%
\begin{equation*}
S_{p,q}\left( a,b\right) =\left( {\dfrac{q(a^{p}-b^{p})}{p(a^{q}-b^{q})}}%
\right) ^{1/(p-q)}\text{ if }pq\left( p-q\right) \neq 0
\end{equation*}%
and $S_{p,q}\left( a,b\right) $ is defined as its limits at $p=0$ or $q=0$
or $p=q$ if $pq\left( p-q\right) =0$. The complete elliptic integrals of the
second kind $E$ is defined on $\left( 0,1\right) $ by%
\begin{equation*}
E\left( r\right) =\int_{0}^{\pi /2}\sqrt{1-r^{2}\sin ^{2}t}dt.
\end{equation*}%
We prove that the functions%
\begin{equation*}
F\left( r\right) =\frac{1-\left( 2/\pi \right) E\left( r\right) }{%
1-S_{11/4,7/4}\left( 1,r^{\prime }\right) }\text{ and }G\left( r\right) =%
\frac{1-\left( 2/\pi \right) E\left( r\right) }{1-S_{5/2,2}\left(
1,r^{\prime }\right) }
\end{equation*}%
are strictly decreasing and increasing on $\left( 0,1\right) $,
respectively, where $r^{\prime }=\sqrt{1-r^{2}}$. These yield some very
accurate approximations for the complete elliptic integrals of the second
kind, which greatly improve some known results.
\end{abstract}

\maketitle

\section{Introduction}

For $r\in \left( 0,1\right) $, the well-known complete elliptic integrals of
the first and second kinds \cite{Bowman-IEFA-NY-1961}, \cite%
{Byrd-HEIES-NY-1971} are defined by%
\begin{equation}
K\left( r\right) =\int_{0}^{\pi /2}\frac{dt}{\sqrt{1-r^{2}\sin ^{2}t}}\text{%
, \ }K\left( 0^{+}\right) =\frac{\pi }{2}\text{, \ }K\left( 1^{-}\right)
=\infty  \label{K}
\end{equation}%
and%
\begin{equation}
E\left( r\right) =\int_{0}^{\pi /2}\sqrt{1-r^{2}\sin ^{2}t}dt\text{, \ }%
E\left( 0^{+}\right) =\frac{\pi }{2}\text{, \ }E\left( 1^{-}\right) =1,
\label{E}
\end{equation}%
respectively. These integrals can be expressed exactly in terms of Gaussian
hypergeometric function%
\begin{equation*}
_{2}F_{1}(a,b;c;z)=\sum_{n=0}^{\infty }\frac{\left( a\right) _{n}\left(
b\right) _{n}\left( a_{3}\right) _{n}}{\left( c\right) _{n}}\frac{z^{n}}{n!}%
,|z|<1,
\end{equation*}%
where $a,b,c\in \mathbb{R}$ with $c\neq 0,-1,-2,...$, $\left( a\right) _{n}$
is defined by $\left( a\right) _{0}=1$ for $a\neq 0$ and%
\begin{equation*}
\left( a\right) _{n}=a\left( a+1\right) \cdot \cdot \cdot \left(
a+n-1\right) ,a\neq 0.
\end{equation*}%
Indeed, we have%
\begin{eqnarray}
K &=&K\left( r\right) =\frac{\pi }{2}F\left( \frac{1}{2},\frac{1}{2}%
;1;r^{2}\right) =\frac{\pi }{2}\sum_{n=0}^{\infty }\frac{\left( \tfrac{1}{2}%
\right) _{n}^{2}}{n!n!}r^{2n},  \label{Ke} \\
E &=&E\left( r\right) =\frac{\pi }{2}F\left( -\frac{1}{2},\frac{1}{2}%
;1;r^{2}\right) =\frac{\pi }{2}\sum_{n=0}^{\infty }\frac{\left( -\frac{1}{2}%
\right) _{n}\left( \frac{1}{2}\right) _{n}}{n!n!}r^{2n}.  \label{Ee}
\end{eqnarray}

It is well known that the complete elliptic integrals have many important
applications in physics, engineering, geometric function theory,
quasiconformal analysis, theory of mean values, number theory and other
related fields \cite{Anderson-CA-14-1998}, \cite{Anderson-PJM-192-2000}, 
\cite{Anderson-IJM-62-1988}, \cite{Baricz-LNM-1994}, \cite{Baricz-EM-2011}, 
\cite{Barnard-JMA-31-2000}, \cite{Barnard-JMA-32-2000}, \cite%
{Barnard-JMAA-260-2001}, \cite{Qiu-ACS-43-2000}, \cite{Qiu-JIA-4-1999}, \cite%
{Vuorinen-SM-121-1996}.

Let $l(1,r)$ be the arc length of an ellipse with semiaxis $1$ and $r\in
(0,1)$. Then%
\begin{equation*}
l\left( 1,r\right) =4E\left( r^{\prime }\right) ,
\end{equation*}%
where and in what follows $r^{\prime }=\sqrt{1-r^{2}}$. In 1883, Muir \cite%
{Muir-MM-12-1883} presented a simple approximation for $l\left( 1,r\right) $
by $2\pi A_{3/2}\left( 1,r\right) $, where%
\begin{equation*}
A_{p}\left( a,b\right) =\left( \frac{a^{p}+b^{p}}{2}\right) ^{1/p}\text{ if }%
p\neq 0\text{ and }A_{0}\left( a,b\right) =\sqrt{ab}
\end{equation*}%
is the classical power mean of positive numbers $a$ and $b$. This mean
contains some simple ones, such as $A_{-1}\left( a,b\right) =H\left(
a,b\right) $ --harmonic mean, $A_{1}\left( a,b\right) =A\left( a,b\right) $
--arithmetic mean, $A_{0}\left( a,b\right) =G\left( a,b\right) $ --geometric
mean, and $A_{2}\left( a,b\right) =S\left( a,b\right) $ --root-square mean,
etc.

In 1996, Vuorinen \cite{Vuorinen-PFDE-1988} conjectured that the following
inequality%
\begin{equation}
E\left( r\right) \geq \frac{\pi }{2}A_{3/2}\left( 1,r^{\prime }\right) =:%
\frac{\pi }{2}\mathcal{A}_{1}\left( r^{\prime }\right)  \label{E>A_3/2}
\end{equation}%
holds for $0\leq r\leq 1$. This conjecture was proved in 1997 in \cite[%
Theorem 2]{Qiu-JHIEE-3-1997} by Qiu and Shen (see also \cite[Theorem 1.1]%
{Barnard-JMA-31-2000}). Barnard et al. \cite{Barnard-JMA-32-2000} discovered
an upper bound $\left( \pi /2\right) A_{2}\left( 1,r^{\prime }\right) $ for $%
E\left( r\right) $, that is,%
\begin{equation}
E\left( r\right) \leq \frac{\pi }{2}A_{2}\left( 1,r^{\prime }\right) \text{
\ }\left( 0\leq r\leq 1\right) .  \label{E<A_2}
\end{equation}%
An improvement of (\ref{E<A_2}) was presented by Qiu in \cite[Corollary (1)]%
{Qiu-JHIEE-20(1)-2000} (see also \cite[Theorem 22.]{Alzer-JCAM-17(2)-2004}),
which states that the inequality%
\begin{equation}
E\left( r\right) \leq \frac{\pi }{2}A_{q_{0}}\left( 1,r^{\prime }\right)
\label{E<A_q0}
\end{equation}%
is valid for all $r\in \left( 0,1\right) $ with the best constant $q_{0}=\ln
2/\ln \left( \pi /2\right) $.

Motivated by the inequalities (\ref{E>A_3/2})--(\ref{E<A_q0}), some new
approximations for $E\left( r\right) $ in terms of bivariate means were
presented. For example, Chu and Wang \cite[Corollary 3.2]{Chu-RM-61-2012}
proved the inequality%
\begin{equation}
E\left( r\right) <\frac{\pi }{2}\mathcal{L}_{1/4}\left( 1,r^{\prime }\right)
=:\frac{\pi }{2}\mathcal{A}_{6}\left( r^{\prime }\right) ,  \label{E<L_1/4}
\end{equation}%
holds for $r\in \left( 0,1\right) $ with the best constant $1/4$, where $%
\mathcal{L}_{p}\left( a,b\right) $ is the Lehmer mean of positive $a$ and $b$
defined by%
\begin{equation*}
\mathcal{L}_{p}\left( a,b\right) =\frac{a^{p+1}+b^{p+1}}{a^{p}+b^{p}}.
\end{equation*}%
Soon afterwards, Chu et al. \cite[Theorem 3.1]{Chu-CMA-63-2012} gave a much
better upper bound for $E\left( r\right) $. They proved the double inequality%
\begin{eqnarray}
&&\tfrac{16-3\sqrt{2}\pi }{2\left( 2-\sqrt{2}\right) \pi }A\left(
1,r^{\prime }\right) -\tfrac{8-\pi -\sqrt{2}\pi }{2\left( 2-\sqrt{2}\right)
\pi }G\left( 1,r^{\prime }\right) +\tfrac{3\pi -8}{2\left( 2-\sqrt{2}\right)
\pi }S\left( 1,r^{\prime }\right)  \notag \\
&<&\frac{2}{\pi }E\left( r\right) <\frac{9}{8}A\left( 1,r^{\prime }\right) -%
\frac{5}{16}G\left( 1,r^{\prime }\right) +\frac{3}{16}S\left( 1,r^{\prime
}\right) =:\mathcal{A}_{7}\left( r^{\prime }\right)  \label{Ch-2}
\end{eqnarray}%
holds for $r\in \left( 0,1\right) $.

Wang et al. \cite[Theorem 2.4]{Wang-JAT-164-2012} established a sharp double
inequality for $E\left( r\right) $:%
\begin{equation}
\mathcal{A}_{2}\left( r^{\prime }\right) :=\frac{23}{16}A\left( 1,r^{\prime
}\right) -\frac{5}{16}H\left( 1,r^{\prime }\right) -\frac{1}{8}S\left(
1,r^{\prime }\right) <\frac{2}{\pi }E\left( r\right)  \label{Wang-1}
\end{equation}%
\begin{equation*}
<\tfrac{24-5\sqrt{2}\pi }{2\pi \left( 3-2\sqrt{2}\right) }A\left(
1,r^{\prime }\right) -\tfrac{8-\pi -\sqrt{2}\pi }{2\pi \left( 3-2\sqrt{2}%
\right) }H\left( 1,r^{\prime }\right) -\tfrac{\left( 16-5\pi \right) }{2\pi
\left( 3-2\sqrt{2}\right) }S\left( 1,r^{\prime }\right) ,
\end{equation*}%
and pointed out that the lower and upper bounds in (\ref{Wang-1}) are
stronger than ones in (\ref{E>A_3/2}) and (\ref{E<A_2}), respectively. In
2013, Wang and Chu \cite[Corollary 3.1]{Wang-JMAA-402-2013} presented
another improvement of (\ref{E>A_3/2}) and (\ref{E<A_2}), which states that%
\begin{equation}
\mathcal{A}_{3}\left( r^{\prime }\right) :=\frac{1}{128}\frac{\left(
9r^{\prime }{}^{2}+14r^{\prime }+9\right) ^{2}}{\left( r^{\prime }+1\right)
^{3}}<\frac{2}{\pi }E\left( r\right) <\frac{1}{\pi }\sqrt{4r^{\prime
}{}^{2}+\left( \pi ^{2}-8\right) r^{\prime }+4}.  \label{Wang-2}
\end{equation}

Very recently, Hua and Qi showed in \cite[Theorem 1.3.]{Hua-F-28(4)-2014}
that the double inequality%
\begin{equation}
\mathcal{A}_{4}\left( r^{\prime }\right) :=\frac{1+r^{\prime }+r^{\prime 2}}{%
2\left( 1+r^{\prime }\right) }+\frac{1+r^{\prime }}{8}<\frac{2}{\pi }E\left(
r\right) <\left( \frac{8}{\pi }-2\right) \frac{1+r^{\prime }+r^{\prime 2}}{%
1+r^{\prime }}+\left( 2-\frac{6}{\pi }\right) \left( 1+r^{\prime }\right)
\label{HQ}
\end{equation}%
is valid for $r\in \left( 0,1\right) $.

Other various approximations for $E\left( r\right) $ can be found in \cite%
{Barnard-JMAA-260-2001}, \cite{Kazi-JAT-146-2007}, \cite%
{Alzer-JCAM-17(2)-2004}, \cite{Barnard-JMA-32-2000}, \cite{Guo-MIA-14-2011}, 
\cite{Wang-AML-24-2011}, \cite{Chu-PIASMS-121-2011}, \cite{Chu-JMAA0395-2012}%
, \cite{Chu-AAA-11-2012}, \cite{Chu-JMI-7(2)-2013}, \cite{Song-JMI-7(4)-2013}%
, \cite{Li-JFSA-2013-394194}, \cite{Hua-PIAS(MS)-124(4)-2014}, \cite%
{Yin-AME-14-2014} and references therein.

For $a,b>0$ with $a\neq b$, the Stolarsky means $S_{p,q}(a,b)$ are defined
in \cite{Stolarsky-MM-48-1975} by%
\begin{equation}
S_{p,q}(a,b)=\left\{ 
\begin{array}{ll}
\left( {\dfrac{q(a^{p}-b^{p})}{p(a^{q}-b^{q})}}\right) ^{1/(p-q)} & \text{if 
}p\neq q,pq\neq 0, \\ 
\left( {\dfrac{a^{p}-b^{p}}{p(\ln a-\ln b)}}\right) ^{1/p} & \text{if }p\neq
0,q=0, \\ 
\left( {\dfrac{a^{q}-b^{q}}{q(\ln a-\ln b)}}\right) ^{1/q} & \text{if }%
p=0,q\neq 0, \\ 
\exp \left( \dfrac{a^{p}\ln a-b^{p}\ln b}{a^{p}-b^{p}}-\dfrac{1}{p}\right) & 
\text{if }{p=q\neq 0,} \\ 
\sqrt{ab} & \text{if }{p=q=0,}%
\end{array}%
\right.  \label{S_p,q}
\end{equation}%
and, $S_{p,q}(a,a)=a$. This family of means contains many famous means, for
example, $S_{1,0}(a,b)=L(a,b)$ --the logarithmic mean, $S_{1,1}(a,b)=I(a,b)$
--the identric (exponential) mean, $S_{2,1}(a,b)=A(a,b)$ --arithmetic mean, $%
S_{3/2,1/2}(a,b)=He(a,b)$ --Heronian mean, $%
S_{2p,p}(a,b)=A^{1/p}(a^{p},b^{p})=A_{p}$ --the $p$-order power mean, $%
S_{3p/2,p/2}(a,b)=He^{1/p}(a^{p},b^{p})=He_{p}$ --the $p$-order Heronian
mean, $S_{p,0}(a,b)=L^{1/p}(a^{p},b^{p})=L_{p}$ --the $p$-order logarithmic
mean, $S_{p,p}(a,b)=I^{1/p}(a^{p},b^{p})=I_{p}$ --the $p$-order identric
(exponential) mean, etc.

Stolarsky means have many well properties, which can follow directly from
the defining formula (\ref{S_p,q}) and be found in \cite%
{Stolarsky-MM-48-1975}, \cite{Leach-AMM-85-1978}, \cite{Pales-JMAA-131-1988d}%
, \cite{Qi-PAMS-130(6)-2002}, \cite{Yang-JIA-149286-2008}, \cite%
{Yang-IJMA-4(34)-2010}, \cite{Losonczi-PMD-75(1-2)-2009}, \cite%
{Yang-JISF-1(1)-2010}, \cite{Yang-IJMMS-2012-540710}, \cite%
{Yang-NKMS-49(1)-2012}, \cite{Yang-arxiv-1408.2252}. For later use, we
mentioned the following properties:

\begin{itemize}
\item[(P1)] For all $a,b>0$ and $p,q\in \mathbb{R}$, $S_{p,q}(a,b)$ are
increasing with both $p$ and $q$, or with both $a$ and $b$ (see \cite[%
(2.22), (3.12)]{Leach-AMM-85-1978}).

\item[(P2)] For fixed $c>0$, $S_{p,2c-p}\left( a,b\right) $ is increasing in 
$p$ on $(-\infty ,c]$ and decreasing on $[c,\infty )$ (see \cite[(3.14)]%
{Leach-AMM-85-1978}, \cite[Corollary 1.1]{Yang-NKMS-49(1)-2012}).

\item[(P3)] For fixed $c>0$ and $p\in \left( 0,2c\right) $, $\left( 1/\theta
_{p}\right) S_{p,2c-p}\left( a,b\right) $ is decreasing in $p$ on $(0,c)$
and increasing on $(c,2c)$, where $\theta _{p}$ is defined by%
\begin{equation}
\theta _{p}=\left( \frac{2c-p}{p}\right) ^{1/\left( 2p-2c\right) }\text{ if }%
p\neq c\text{ and }\theta _{c}=e^{-1/c}  \label{theta_p(c)}
\end{equation}%
(see \cite[Corollary 1.2]{Yang-NKMS-49(1)-2012}).
\end{itemize}

\begin{remark}
\label{P3-C}Taking $a=b$ in (P3) yields that $\left( 1/\theta _{p}\right)
S_{p,2c-p}\left( a,a\right) $ is decreasing in $p$ on $(0,c)$ and increasing
on $(c,2c)$. Since $S_{p,2c-p}\left( a,a\right) =a$, it follows that $\theta
_{p}$ strictly increasing in $p$ on $\left( 0,c\right) $ and decreasing on $%
\left( c,2c\right) $.
\end{remark}

Now we intend to estimate for the complete elliptic integrals of the second
kind $E\left( r\right) $ by the Stolarsky means of $1$ and $r^{\prime }$,
i.e. $S_{p,q}\left( 1,r^{\prime }\right) $. Expanding in power series gives%
\begin{eqnarray*}
\frac{2}{\pi }E\left( r\right) -S_{p,q}\left( 1,r^{\prime }\right) &=&-\frac{%
p+q-9/2}{96}r^{4}-\frac{p+q-9/2}{128}r^{6} \\
&&+\frac{8\left( p+q\right) \left( 2p^{2}+2q^{2}-5p-5q-550\right) +19\,845}{%
45\times 2^{14}}r^{8}+O\left( r^{10}\right) .
\end{eqnarray*}%
In order to increase accuracy of estimate for $E\left( r\right) $, we let $%
p+q-9/2=0$, or $q=9/2-p$. Then we get%
\begin{equation}
\frac{2}{\pi }E\left( r\right) -S_{9/2-p,p}\left( 1,r^{\prime }\right) =%
\frac{\left( 4p-7\right) \left( 4p-11\right) }{5\times 2^{14}}r^{8}+O\left(
r^{10}\right) .  \label{E-S_p}
\end{equation}

Further, taking $p=7/4$ or $11/4$ yields%
\begin{equation}
S_{11/4,7/4}\left( 1,r^{\prime }\right) =\frac{7}{11}\frac{1-r^{\prime 11/4}%
}{1-r^{\prime 7/4}}:=\mathcal{A}_{5}\left( r^{\prime }\right) ,
\label{S_11/4,7/4}
\end{equation}%
and%
\begin{eqnarray*}
\frac{2}{\pi }E\left( r\right) -S_{11/4,7/4}\left( 1,r^{\prime }\right) &=&%
\frac{1}{7\times 2^{21}}r^{12}+O\left( r^{14}\right) , \\
\lim_{r\rightarrow 1^{-}}\left( \frac{2}{\pi }E\left( r\right)
-S_{11/4,7/4}\left( 1,r^{\prime }\right) \right) &=&\frac{2}{\pi }-\frac{7}{%
11}\approx 0.00025614.
\end{eqnarray*}%
Letting $p=2$ or $5/2$ yields%
\begin{equation}
S_{5/2,2}\left( 1,r^{\prime }\right) =\left( \frac{4}{5}\frac{1-r^{\prime
5/2}}{1-r^{\prime 2}}\right) ^{2}:=\mathcal{A}_{8}\left( r^{\prime }\right)
\label{S_5/2,2}
\end{equation}%
and%
\begin{eqnarray*}
\frac{2}{\pi }E\left( r\right) -S_{5/2,2}\left( 1,r^{\prime }\right) &=&-%
\frac{3}{5\times 2^{14}}r^{8}+O\left( r^{10}\right) , \\
\lim_{r\rightarrow 1^{-}}\left( \frac{2}{\pi }E\left( r\right)
-S_{5/2,2}\left( 1,r^{\prime }\right) \right) &=&\frac{2}{\pi }-\frac{16}{25}%
\approx -0.0033802.
\end{eqnarray*}

These show that $S_{11/4,7/4}\left( 1,r^{\prime }\right) $ and $%
S_{5/2,2}\left( 1,r^{\prime }\right) $ may be excellent approximations for
the complete elliptic integrals of the second kind. The purpose of the paper
is to prove this assertion. Our main results are contained in the following
theorems.

\begin{theorem}
\label{MT-7/4}The function%
\begin{equation*}
F\left( r\right) =\frac{1-\left( 2/\pi \right) E\left( r\right) }{%
1-S_{11/4,7/4}\left( 1,r^{\prime }\right) }
\end{equation*}%
is strictly decreasing from $\left( 0,1\right) $ onto $\left( 11\left( \pi
-2\right) /\left( 4\pi \right) ,1\right) $. Therefore, the double inequality%
\begin{equation}
1-\mu +\mu S_{11/4,7/4}\left( 1,r^{\prime }\right) <\frac{2}{\pi }E\left(
r\right) <1-\lambda +\lambda S_{11/4,7/4}\left( 1,r^{\prime }\right)
\label{MI-7/4-1}
\end{equation}%
holds if and only if $\mu \geq 1$ and $\lambda \leq 11\left( \pi -2\right)
/\left( 4\pi \right) \approx 0.9993$.

In particular, we have%
\begin{equation}
S_{11/4,7/4}\left( 1,r^{\prime }\right) <\tfrac{2}{\pi }E\left( r\right) <%
\tfrac{22-7\pi }{4\pi }+\tfrac{11\left( \pi -2\right) }{4\pi }%
S_{11/4,7/4}\left( 1,r^{\prime }\right) <\tfrac{22}{7\pi }S_{11/4,7/4}\left(
1,r^{\prime }\right) ,  \label{MI-7/4-2}
\end{equation}%
where the coefficients $1$ and $22/\left( 7\pi \right) \approx 1.\,0004$ are
the best constants.
\end{theorem}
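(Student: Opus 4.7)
The plan splits into three parts: boundary limits of $F$, monotonicity of $F$ on $(0,1)$, and derivation of the inequalities.

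First I would compute the endpoints. At $r \to 1^-$, direct substitution using $E(1^-) = 1$ and $\lim_{x \to 0^+} S_{11/4,7/4}(1, x) = 7/11$ (read off from (\ref{S_11/4,7/4})) gives $F(1^-) = (1 - 2/\pi)/(1 - 7/11) = 11(\pi-2)/(4\pi)$. At $r \to 0^+$, both $N(r) := 1 - (2/\pi)E(r)$ and $D(r) := 1 - S_{11/4,7/4}(1, r')$ vanish as $r^2/4 + O(r^4)$. The expansion (\ref{E-S_p}) specialized to $p = 7/4$ strengthens this to $N - D = O(r^{12})$, so $N$ and $D$ share the same Taylor polynomial of degree $10$ in $r$, yielding $F(0^+) = 1$.

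The main part is the monotonicity of $F$. My plan is to expand both $N$ and $D$ as power series in $r^2$: from (\ref{Ee}) together with the identity $(-1/2)_n = -(1/2)_n/(2n-1)$,
\begin{equation*}
N(r) = \sum_{n \geq 1} c_n r^{2n}, \qquad c_n = \frac{(1/2)_n^2}{(2n-1)(n!)^2} > 0,
\end{equation*}
and $D(r) = \sum_{n \geq 1} d_n r^{2n}$ with the $d_n$ computable from the Newton binomial expansions of $r'^{7/4} = (1-r^2)^{7/8}$ and $r'^{11/4} = (1-r^2)^{11/8}$ followed by formal division. By (\ref{E-S_p}), $c_n = d_n$ for $1 \leq n \leq 5$ and $d_6 - c_6 = 1/(7 \cdot 2^{21}) > 0$. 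I would then invoke the Biernacki--Krzy\.z type ratio lemma: if $c_n, d_n > 0$ and the sequence $\{c_n/d_n\}$ is non-increasing in $n$, then $F = N/D$ is non-increasing in $r^2$ on $(0,1)$, and the strict drop at $n = 6$ promotes this to strict decrease.

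The hard part will be verifying $d_n > 0$ for all $n$ and $c_n d_{n+1} \geq c_{n+1} d_n$ for $n \geq 5$ (the inequality being equality for smaller $n$). A clean closed form for $d_n$ is unlikely; the most promising route is a three-term recursion for $d_n$ derived from the binomial expansion, combined with an inductive estimate. A fallback is the monotone form of L'H\^opital's rule: reduce monotonicity of $F$ to that of $N'/D'$, where $N'(r) = (2/\pi r)(K(r) - E(r))$ and $D'$ is explicitly rational in $r$, $r'$ via (\ref{S_11/4,7/4}), iterating the rule if needed until a tractable sign question is reached.

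Once $F$ is shown to be strictly decreasing with range $(11(\pi-2)/(4\pi),\, 1)$, the double inequality (\ref{MI-7/4-1}) with sharp constants $\mu = 1$, $\lambda = 11(\pi-2)/(4\pi)$ is immediate. For the chain (\ref{MI-7/4-2}), the first two inequalities are the endpoints of (\ref{MI-7/4-1}); the third reduces, after clearing denominators, to $S_{11/4,7/4}(1, r') > 7/11$ on $(0,1)$, which holds by property (P1) since the Stolarsky mean is strictly increasing in its second argument with value $7/11$ at $r' = 0$. Optimality of the constants $1$ and $22/(7\pi)$ is confirmed by evaluating at $r \to 0^+$ and $r \to 1^-$.
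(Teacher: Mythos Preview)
Your boundary computations and the derivation of (\ref{MI-7/4-1})--(\ref{MI-7/4-2}) from the monotonicity of $F$ are fine, and they match the paper's argument. The substantive issue is the monotonicity itself.

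Your primary plan --- applying the Biernacki--Krzy\.z lemma directly to $N/D$ --- is exactly how the paper handles the companion Theorem~\ref{MT-2} for $S_{5/2,2}$, where the denominator expands cleanly because $S_{5/2,2}(1,r')$ is a perfect square and no formal division is needed. For $S_{11/4,7/4}$, however, the denominator $D(r)$ involves the \emph{quotient} $(1-r'^{11/4})/(1-r'^{7/4})$, so your coefficients $d_n$ are determined by a full convolution identity $11\,D(r)\bigl(1-(1-r^2)^{7/8}\bigr)=4-11(1-r^2)^{7/8}+7(1-r^2)^{11/8}$, not a short recursion. The ``three-term recursion'' you invoke does not obviously exist, and without it the inductive estimate on $c_n d_{n+1}-c_{n+1}d_n$ has no starting point. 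The paper in fact abandons this route for Theorem~\ref{MT-7/4}.

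Your fallback --- the monotone L'H\^opital rule --- is precisely the paper's method, but you greatly underestimate what ``iterating the rule until a tractable sign question is reached'' entails. The paper applies the rule twice, arriving at a sign condition $f_7(r)>0$ where $f_7$ is a combination of $K$, $E$, and $(1-r^2)^{-7/8}$. Establishing this positivity is the bulk of the proof: it requires expanding $f_7$ as a power series with a Cauchy product, verifying the first eleven coefficients by hand, and then proving $d_{n+1}>\tfrac{7}{8}d_n$ for $n\ge 10$ via Wendel's gamma-ratio bounds (Lemma~\ref{L-g(x+a)/g(x+1)>}), a closed-form evaluation of a harmonic-type sum in terms of $\psi(n+1)$, Batir's inequality $\psi(x+1)>\ln(x+\tfrac12)$, and a polynomial sign analysis using Lemma~\ref{L-P-zp}. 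None of this machinery is visible in your outline. So the gap is not in strategy but in execution: the ``tractable sign question'' you anticipate is itself a seven-step argument occupying the entire Section~3.
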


\begin{theorem}
\label{MT-2}The function%
\begin{equation*}
G\left( r\right) =\frac{1-\left( 2/\pi \right) E\left( r\right) }{%
1-S_{5/2,2}\left( 1,r^{\prime }\right) }
\end{equation*}%
is strictly increasing from $\left( 0,1\right) $ onto $\left( 1,25\left( \pi
-2\right) /\left( 9\pi \right) \right) $. Consequently, the double inequality%
\begin{equation}
1-\xi +\xi S_{5/2,2}\left( 1,r^{\prime }\right) <\frac{2}{\pi }E\left(
r\right) <1-\eta +\eta S_{5/2,2}\left( 1,r^{\prime }\right)  \label{MI-2-1}
\end{equation}%
holds if and only if $\xi \geq 25\left( \pi -2\right) /\left( 9\pi \right)
\approx 1.\,0094$ and $\eta \leq 1$.

Particularly, it holds that%
\begin{equation}
\tfrac{25}{8\pi }S_{5/2,2}\left( 1,r^{\prime }\right) <-\tfrac{2\left( 8\pi
-25\right) }{9\pi }+\tfrac{25\left( \pi -2\right) }{9\pi }S_{5/2,2}\left(
1,r^{\prime }\right) <\tfrac{2}{\pi }E\left( r\right) <S_{5/2,2}\left(
1,r^{\prime }\right) ,  \label{MI-2-2}
\end{equation}%
where the coefficients $25/\left( 8\pi \right) \approx 0.99472$ and $1$ are
the best constants.
\end{theorem}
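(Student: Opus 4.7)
The plan is to first compute the two boundary values $G(0^+)$ and $G(1^-)$, and then to establish strict monotonicity of $G$ on $(0,1)$; the double inequalities in the statement then follow by rearrangement with $\eta = G(0^+) = 1$ and $\xi = G(1^-) = 25(\pi-2)/(9\pi)$, since $G$ strictly increasing means $\eta \le G(r) < \xi$ forces the claimed bounds.

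For the endpoints, the value at $r \to 1^-$ is immediate: $r' \to 0^+$ yields $S_{5/2,2}(1,r') \to (4/5)^2 = 16/25$ and $E(r) \to 1$, so $G(1^-) = (1-2/\pi)/(1-16/25) = 25(\pi-2)/(9\pi)$. The left endpoint is a $0/0$ indeterminate form, which I would resolve using the asymptotic identity
\[
\frac{2}{\pi}E(r) - S_{5/2,2}(1,r') = -\frac{3}{5\cdot 2^{14}}\,r^{8} + O(r^{10})
\]
already established in the paper, together with the common leading expansion $1-(2/\pi)E(r) = \tfrac{r^2}{4} + \tfrac{3r^4}{64} + O(r^6) = 1-S_{5/2,2}(1,r')$ of the two denominators. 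The latter is obtained from the closed form $S_{5/2,2}(1,r')^{1/2} = \tfrac{4}{5}(1-(1-r^2)^{5/4})/r^2$ and a Newton binomial expansion in $r^2$. Dividing then yields $G(r) = 1 + O(r^6)$, hence $G(0^+) = 1$.

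For the monotonicity, I would write $G = f/g$ with $f(r) = 1 - (2/\pi)E(r)$ and $g(r) = 1 - S_{5/2,2}(1,r')$, both vanishing at $0$ and positive on $(0,1)$ (positivity being standard since $S_{5/2,2}(1,r') \in (r',1)$). By the monotone form of L'Hopital's rule, it suffices to show $f'/g'$ is strictly increasing on $(0,1)$. The classical identity $E'(r)=(E-K)/r$ gives $f'(r)=2(K(r)-E(r))/(\pi r) > 0$. For $g'$, I would use the substitution $t = r'^{1/2}$, under which
\[
S_{5/2,2}(1,r') = \left(\frac{4}{5}\cdot\frac{1+t+t^2+t^3+t^4}{1+t+t^2+t^3}\right)^{2},
\]
and the polynomial factorizations $4t^{5} - 5t^{4} + 1 = (1-t)^2(4t^3+3t^2+2t+1)$ and $9 - 5t^{4} - 4t^{5} = (1-t)(4t^{4}+9t^{3}+9t^{2}+9t+9)$ collapse $g'$ into a manageable algebraic form, with positive sign guaranteed on $(0,1)$.

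The principal obstacle is proving strict monotonicity of $f'/g'$ itself. This ratio couples the transcendental combination $K-E$ against an algebraic function of $r'$, and is unlikely to yield to one more differentiation. My plan would be a further reduction: either a second application of the L'Hopital-type monotonicity rule after clearing common factors, or---more tractably---expressing $f'(r)\cdot r$ as a power series in $r^2$ via the hypergeometric expansions for $K$ and $E$, rewriting $g'(r)$ as a power series in $r^2$ via the $t=r'^{1/2}$ substitution (with $t^2 = r'$ expanded in $r^2$), and invoking the Biernacki-Krzyz criterion for ratios of power series whose coefficient quotients are monotone. Once monotonicity of $G$ is in hand, the sharpness of the constants $1$ and $25(\pi-2)/(9\pi)$ in (MI-2-1) is immediate, and the specialization (MI-2-2) follows since $25/(8\pi)\cdot S_{5/2,2}(1,r')$ and $-\frac{2(8\pi-25)}{9\pi}+\frac{25(\pi-2)}{9\pi}S_{5/2,2}(1,r')$ both equal $2/\pi$ at $r=1$ while remaining below $(2/\pi)E(r)$ throughout, the sharp constant $25/(8\pi)$ being forced by this endpoint equality.
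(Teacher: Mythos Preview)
Your endpoint computations are correct and your eventual instinct to use the Biernacki--Krzy\.z criterion is the right one, but the L'H\^opital detour is unnecessary and the proof remains incomplete at the decisive step. Since both $f(r)=1-(2/\pi)E(r)$ and $g(r)=1-S_{5/2,2}(1,r')$ already expand as power series in $r^2$ beginning with the $r^2$ term, the paper applies the power-series criterion directly to $G=f/g$, with no passage through $f'/g'$. (In fact the two routes coincide: after cancelling a common factor of $r$, the coefficient ratios of $f'/g'$ are the same $v_n/u_n$ as those of $f/g$.) The real work, which your plan does not carry out, is to obtain the closed forms
\[
v_n=\frac{1}{2\,n!\,n!}\left(\tfrac12\right)_{n-1}\left(\tfrac12\right)_n,\qquad
u_n=\frac{6}{5(n+2)!}\left(\tfrac12\right)_{n-1}+\frac{2}{5(n+2)!}\left(\tfrac34\right)_n
\]
from the binomial expansions of $(1-r^2)^{5/2}$ and $(1-r^2)^{5/4}$, and then to verify that $(v_{n+1}/v_n)u_n-u_{n+1}>0$: this quantity splits into a positive piece plus a term with factor $n^2-11n-6$, so it is positive for $n\ge 12$, and the cases $1\le n\le 11$ are checked individually. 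Your substitution $t=r'^{1/2}$ and the polynomial factorizations are correct but lead away from, rather than toward, these coefficient formulas.

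Your treatment of (MI-2-2) also has a gap. The second and third inequalities there follow from the range of $G$, but the leftmost one, comparing $\tfrac{25}{8\pi}S_{5/2,2}(1,r')$ with $-\tfrac{2(8\pi-25)}{9\pi}+\tfrac{25(\pi-2)}{9\pi}S_{5/2,2}(1,r')$, does not follow from knowing that both lie below $(2/\pi)E(r)$; that says nothing about their relative order. The paper handles it by the algebraic identity
\[
-\tfrac{2(8\pi-25)}{9\pi}+\tfrac{25(\pi-2)}{9\pi}S_{5/2,2}(1,r')-\tfrac{25}{8\pi}S_{5/2,2}(1,r')
=\tfrac{25(8\pi-25)}{72\pi}\bigl(S_{5/2,2}(1,r')-\tfrac{16}{25}\bigr),
\]
which is positive because $S_{5/2,2}(1,r')>S_{5/2,2}(1,0)=16/25$ by monotonicity of the Stolarsky mean in its arguments.
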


The paper is organized as follows. Some lemmas used to prove main results
are presented in Section 2. The proof of Theorem 1 is complicated and
longer, so it is independently arranged in Sections 3; while the proof of
Theorem 2 is placed in Section 4. In Section 5, some interesting and applied
corollaries involving the monotonicity of difference and ratio between $%
\left( 2/\pi \right) E\left( r\right) $ and $S_{9/2-p,p}\left( 1,r^{\prime
}\right) $ are deduced. In the last section, it is shown that our
approximations $S_{11/4,7/4}\left( 1,r^{\prime }\right) $ and $%
S_{2,5/2}\left( 1,r^{\prime }\right) $ are indeed excellent by accuracy of
comparing some known approximations with ours.

\section{Lemmas}

In order to prove our main results, we need some lemmas.

The first lemma is the "L'Hospital Monotone Rule" (see \cite%
{Pinelis-JIPAM-3(1)-2002-5}, \cite{Anderson-AMM-113(9)-2006}), which has
been widely used very effectively in the study of some areas. A new and
natural way to prove this class of rules can refer to \cite%
{Yang-arxiv-1409.6408}, which is easily understood and used.

\begin{lemma}[{\protect\cite[Proposition 1.1]{Pinelis-JIPAM-3(1)-2002-5}, 
\protect\cite[Theorem 2]{Anderson-AMM-113(9)-2006}}]
\label{L-f/g}For $-\infty <a<b<\infty $, let $f,g:[a,b]\rightarrow \mathbb{R}
$ be continuous functions that are differentiable on $\left( a,b\right) $,
with $f\left( a\right) =g\left( a\right) =0$ or $f\left( b\right) =g\left(
b\right) =0$. Assume that $g^{\prime }(x)\neq 0$ for each $x$ in $(a,b)$. If 
$f^{\prime }/g^{\prime }$ is increasing (decreasing) on $(a,b)$ then so is $%
f/g$.
\end{lemma}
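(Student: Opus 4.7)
The plan is to apply the L'Hospital Monotone Rule (Lemma \ref{L-f/g}) to $F=f/g$ with $f(r)=1-(2/\pi)E(r)$ and $g(r)=1-S_{11/4,7/4}(1,r^{\prime})$, both of which vanish at $r=0$. Once strict monotonicity of $F$ on $(0,1)$ is verified, the two endpoint limits pin down the range, and the sharp double inequality (\ref{MI-7/4-1}) follows at once; the corollary chain (\ref{MI-7/4-2}) is then a short algebraic consequence.

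For the endpoint values, the explicit form (\ref{S_11/4,7/4}) gives $S_{11/4,7/4}(1,0^{+})=7/11$, whence $F(1^{-})=(1-2/\pi)/(1-7/11)=11(\pi-2)/(4\pi)$. For $F(0^{+})$, the standard series $\tfrac{2}{\pi}E(r)=1-r^{2}/4+O(r^{4})$ together with the expansion already recorded in (\ref{E-S_p}), which shows $(2/\pi)E(r)-S_{11/4,7/4}(1,r^{\prime})=O(r^{12})$, implies that both $f$ and $g$ vanish like $r^{2}/4$ at $0$, so $F(0^{+})=1$.

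The heart of the argument is strict monotonicity. Lemma \ref{L-f/g} reduces the task to showing that $f^{\prime}/g^{\prime}$ is strictly decreasing on $(0,1)$. The classical identity $E^{\prime}(r)=(E(r)-K(r))/r$ yields $f^{\prime}(r)=\tfrac{2}{\pi r}(K(r)-E(r))>0$, while the chain rule with $dr^{\prime}/dr=-r/r^{\prime}$ and the closed form of $S_{11/4,7/4}$ give $g^{\prime}(r)=(r/r^{\prime})\,h(r^{\prime})$ for an explicit algebraic $h$. The substitution $u=r^{\prime 1/4}\in(0,1)$ is attractive because it turns the Stolarsky mean into the rational function $\tfrac{7}{11}(1+u+\cdots+u^{10})/(1+u+\cdots+u^{6})$, so that $f^{\prime}/g^{\prime}$ takes the form $\bigl[(K-E)/r^{2}\bigr]\cdot H(u)$ for an algebraic factor $H$. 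I would attempt to prove the required monotonicity either by factoring into two monotone pieces, or by a second invocation of Lemma \ref{L-f/g} after rewriting so as to separate the elliptic part from the algebraic one.

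The main obstacle is exactly this monotonicity verification: the range $(11(\pi-2)/(4\pi),1)\approx(0.9993,1)$ is numerically extremely narrow, which forecloses any naive convexity or leading-term estimate. A plausible route is to differentiate $f^{\prime}/g^{\prime}$, clear denominators, and reduce to the positivity of an expression of the form $A(u)(K-E)-B(u)E$ with $A,B$ algebraic in $u$; such statements typically succumb to one more differentiation and a polynomial inequality in $u$, using the standard formulas for $K^{\prime},E^{\prime}$. Once monotonicity is in hand, (\ref{MI-7/4-1}) is immediate from the endpoint values. For (\ref{MI-7/4-2}), the leftmost bound $S_{11/4,7/4}(1,r^{\prime})<(2/\pi)E(r)$ is the case $\mu=1$, while the rightmost bound $(22/(7\pi))S_{11/4,7/4}(1,r^{\prime})$ dominates the middle expression $1-\lambda+\lambda S$ with $\lambda=11(\pi-2)/(4\pi)$: subtracting reduces the desired inequality, after using $22-7\pi>0$, to $S_{11/4,7/4}(1,r^{\prime})>7/11$, which holds by property (P1) since $S_{11/4,7/4}(1,r^{\prime})>S_{11/4,7/4}(1,0)=7/11$ for all $r\in(0,1)$; the sharpness of the constants $1$ and $22/(7\pi)$ follows by letting $r\to 0^{+}$ and $r\to 1^{-}$ respectively.
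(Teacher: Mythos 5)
Your proposal does not address the statement at all. The statement to be proved is the L'Hospital Monotone Rule itself (Lemma \ref{L-f/g}): that if $f(a)=g(a)=0$ or $f(b)=g(b)=0$, $g'\neq 0$ on $(a,b)$, and $f'/g'$ is increasing (decreasing), then $f/g$ is increasing (decreasing). What you have written is instead a plan for \emph{applying} this lemma to the paper's Theorem \ref{MT-7/4}, taking $f(r)=1-(2/\pi)E(r)$ and $g(r)=1-S_{11/4,7/4}(1,r')$. Nowhere in your text is there an argument for why monotonicity of $f'/g'$ transfers to $f/g$; that implication is precisely what is being asked for, and you have assumed it as a black box. (For what it is worth, the paper does not prove this lemma either — it cites it from Pinelis and from Anderson, Vamanamurthy and Vuorinen — but your submission still proves the wrong statement.)

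A correct proof is short. Suppose $f(a)=g(a)=0$ and $f'/g'$ is increasing; since $g'$ is continuous and nonvanishing it has constant sign, say $g'>0$, so $g>0$ on $(a,b)$. Fix $x\in(a,b)$. By the Cauchy mean value theorem there is $\xi\in(a,x)$ with
\begin{equation*}
\frac{f(x)}{g(x)}=\frac{f(x)-f(a)}{g(x)-g(a)}=\frac{f'(\xi)}{g'(\xi)}\leq \frac{f'(x)}{g'(x)},
\end{equation*}
the last inequality because $f'/g'$ is increasing and $\xi<x$. Hence $f'(x)g(x)-f(x)g'(x)\geq 0$, so $\left(f/g\right)'=\bigl(f'g-fg'\bigr)/g^{2}\geq 0$ and $f/g$ is increasing; if $f'/g'$ is strictly increasing, then the set where $(f/g)'=0$ has empty interior and $f/g$ is strictly increasing. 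The cases $g'<0$, the decreasing case, and the case $f(b)=g(b)=0$ (where the Cauchy mean value theorem is applied on $(x,b)$ and the inequality reverses in the right way) are handled identically. You should supply an argument of this kind rather than a sketch of the theorem that uses the lemma.
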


The second lemma is a monotonicity criterion for the ratio of power series,
which will be used to prove Theorem 2.

\begin{lemma}[\textbf{\protect\cite{Biernacki-AUMC-S-9-1955}}]
\label{L-ps-A/B}Let $A\left( t\right) =\sum_{k=0}^{\infty }a_{k}t^{k}$ and $%
B\left( t\right) =\sum_{k=0}^{\infty }b_{k}t^{k}$ be two real power series
converging on $\left( -r,r\right) $ ($r>0$) with $b_{k}>0$ for all $k$. If
the sequence $\{a_{k}/b_{k}\}\ $is increasing (decreasing) for all $k$, then
the function $t\mapsto A\left( t\right) /B\left( t\right) $ is also
increasing (decreasing) on $\left( 0,r\right) $.
\end{lemma}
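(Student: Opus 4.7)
The plan is to reduce the monotonicity of $A/B$ to the sign of the Wronskian-like quantity $H(t) := A'(t)B(t) - A(t)B'(t)$, and then to expand $H$ as a power series with uniformly signed coefficients. Since $b_k > 0$ for all $k$, we have $B(t) > 0$ on $[0,r)$, so $(A/B)' = H/B^{2}$ has the same sign as $H$, and the whole claim reduces to showing $H \geq 0$ on $(0,r)$, strictly in the strict case.

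To compute $H$, I would form the Cauchy products of $A'B$ and $AB'$ separately and then reindex both sums over the common range $0 \leq k \leq n+1$ (the boundary terms vanishing automatically). This gives
\begin{equation*}
H(t) \;=\; \sum_{n=0}^{\infty} d_n\, t^n, \qquad d_n \;=\; \sum_{k=0}^{n+1}(2k - n - 1)\, a_k\, b_{n+1-k}.
\end{equation*}
The crux is then a symmetric pairing. Setting $m = n+1$ and pairing the terms at indices $k$ and $m-k$ (the middle index $k = m/2$, present only when $m$ is even, contributes zero because $2k - m = 0$), each surviving pair equals
\begin{equation*}
(2k - m)\, b_k b_{m-k} \left( \frac{a_k}{b_k} - \frac{a_{m-k}}{b_{m-k}} \right).
\end{equation*}
Under the hypothesis that $\{a_k/b_k\}$ is increasing, for $k < m/2$ both factors $(2k-m)$ and $(a_k/b_k - a_{m-k}/b_{m-k})$ are $\leq 0$, so their product, multiplied by the positive quantity $b_k b_{m-k}$, is $\geq 0$. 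Hence $d_n \geq 0$ for every $n$; the $n = 0$ pair reads $d_0 = b_0 b_1 (a_1/b_1 - a_0/b_0)$, which is strictly positive in the strict case and already forces $H(t) \geq d_0 > 0$ on $[0,r)$.

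The monotonicity of $A/B$ on $(0,r)$ then follows, and the decreasing variant is obtained by applying the same argument to $-A$ in place of $A$. The only real obstacle is the bookkeeping that consolidates the two Cauchy products into a single expression amenable to pairing; once the coefficient $d_n = \sum_k (2k-m)\, a_k b_{m-k}$ is in hand, the sign analysis is essentially forced, and no further estimates or auxiliary results are needed.
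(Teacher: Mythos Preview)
Your argument is correct. The paper does not supply a proof of this lemma at all: it is simply quoted from Biernacki and Krzy\.{z} (1955) as a known tool, so there is no ``paper's proof'' to compare against. What you have written is essentially the standard proof of this classical result---reduce to the sign of the Wronskian $H=A'B-AB'$, expand as a Cauchy product to get $d_n=\sum_{k=0}^{n+1}(2k-n-1)a_k b_{n+1-k}$, and pair terms $k\leftrightarrow (n+1)-k$ to see each pair is nonnegative when $\{a_k/b_k\}$ is increasing. The bookkeeping (extending both sums to $0\le k\le n+1$ via the vanishing boundary terms) and the strict case via $d_0=b_0 b_1(a_1/b_1-a_0/b_0)>0$ are handled cleanly. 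Nothing further is needed.
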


A more general monotonicity criterion for the ratio of power series has been
established in \cite{Yanga-JMAA-428-2014} recently.

For $x>0$ the classical Euler's gamma function $\Gamma $ and psi (digamma)
function $\psi $ are defined by%
\begin{equation}
\Gamma \left( x\right) =\int_{0}^{\infty }t^{x-1}e^{-t}dt,\text{ \ \ \ \ }%
\psi \left( x\right) =\frac{\Gamma ^{\prime }\left( x\right) }{\Gamma \left(
x\right) },  \label{Gamma}
\end{equation}%
respectively. In the proof of Theorem 1, we will use several inequalities
for the gamma and psi functions, in which Lemma \ref{L-g(x+a)/g(x+1)>} is
very crucial.

\begin{lemma}[{\protect\cite{Wendel-AMM-55-1948}, \protect\cite[(2.8)]%
{Qi-JIA-2010-493058}}]
\label{L-g(x+a)/g(x+1)>}For all $x>0$ and all $a\in \left( 0,1\right) $, it
holds that%
\begin{equation*}
\left( \frac{x}{x+a}\right) ^{1-a}<\frac{\Gamma \left( x+a\right) }{%
x^{a}\Gamma \left( x\right) }<1,
\end{equation*}%
or equivalently,%
\begin{equation}
\frac{1}{\left( x+a\right) ^{1-a}}<\frac{\Gamma \left( x+a\right) }{\Gamma
\left( x+1\right) }<\frac{1}{x^{1-a}}.  \label{L-gi><}
\end{equation}
\end{lemma}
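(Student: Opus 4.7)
The plan is to prove the upper bound first via a direct application of H\"older's inequality to the integral representation of $\Gamma$, and then to obtain the lower bound as a corollary of the upper bound by shifting the parameters.

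First I would write the integrand of $\Gamma(x+a)$ as a product factored according to the H\"older exponents $1/(1-a)$ and $1/a$, namely
\begin{equation*}
\Gamma(x+a)=\int_{0}^{\infty}t^{x+a-1}e^{-t}\,dt=\int_{0}^{\infty}\bigl(t^{x-1}e^{-t}\bigr)^{1-a}\bigl(t^{x}e^{-t}\bigr)^{a}\,dt,
\end{equation*}
so H\"older's inequality yields $\Gamma(x+a)\le \Gamma(x)^{1-a}\Gamma(x+1)^{a}=x^{a}\Gamma(x)$, using the functional equation $\Gamma(x+1)=x\Gamma(x)$. Strictness follows since the two factors $t^{x-1}e^{-t}$ and $t^{x}e^{-t}$ are not proportional. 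This gives the right inequality $\Gamma(x+a)/(x^{a}\Gamma(x))<1$.

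For the left inequality, I would apply the just-proved upper bound with $x$ replaced by $x+a$ and $a$ replaced by $1-a\in(0,1)$. This produces
\begin{equation*}
\Gamma(x+1)=\Gamma\bigl((x+a)+(1-a)\bigr)<(x+a)^{1-a}\,\Gamma(x+a),
\end{equation*}
and using $\Gamma(x+1)=x\Gamma(x)$ and dividing by $x^{a}\Gamma(x)$ gives exactly $(x/(x+a))^{1-a}<\Gamma(x+a)/(x^{a}\Gamma(x))$. The equivalent form (\ref{L-gi><}) is then obtained by multiplying the displayed chain by $x^{a-1}=1/x^{1-a}$ and again invoking $\Gamma(x+1)=x\Gamma(x)$.

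The main (though still very mild) obstacle is recognising the correct H\"older splitting of the exponent $x+a-1=(1-a)(x-1)+a\cdot x$ so that the two resulting integrals become $\Gamma(x)$ and $\Gamma(x+1)$; after that the algebraic rearrangements are routine and the strictness is automatic. An alternative route is to invoke the Bohr--Mollerup characterisation and derive both bounds from the log-convexity of $\Gamma$, but the H\"older argument above is shorter and gives the strict inequalities directly.
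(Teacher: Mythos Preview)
Your proof is correct and is essentially Wendel's original 1948 argument: split $t^{x+a-1}e^{-t}=(t^{x-1}e^{-t})^{1-a}(t^{x}e^{-t})^{a}$, apply H\"older with conjugate exponents $1/(1-a)$ and $1/a$ to obtain the upper bound, then shift $(x,a)\mapsto(x+a,1-a)$ to deduce the lower bound. The strictness check and the passage to the equivalent form via $\Gamma(x+1)=x\Gamma(x)$ are handled correctly.

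There is nothing to compare with in the paper itself: this lemma is quoted without proof, with a citation to Wendel and to Qi's survey. Your H\"older argument is exactly the proof those references give, so you have reconstructed the intended source proof rather than a different route.
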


\begin{lemma}[\protect\cite{Ivady-JMI-3(2)-2009}]
\label{L-g(x)<}For $x\in \left( 0,1\right) $, it holds that%
\begin{equation*}
\frac{x^{2}+1}{x+1}<\Gamma \left( x+1\right) <\frac{x^{2}+2}{x+2},x\in
\left( 0,1\right) ,
\end{equation*}%
or equivalently,%
\begin{equation}
\frac{x^{2}+1}{\left( x+1\right) x}<\Gamma \left( x\right) <\frac{x^{2}+2}{%
\left( x+2\right) x},x\in \left( 0,1\right) .  \label{L-gi<>}
\end{equation}
\end{lemma}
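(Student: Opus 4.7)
The plan is to prove each half of the double inequality separately by reducing it to the positivity on $(0,1)$ of an auxiliary function that vanishes at both endpoints. Set
\[
u(x) = (x+1)\Gamma(x+1) - (x^{2}+1), \qquad v(x) = (x^{2}+2) - (x+2)\Gamma(x+1);
\]
since $\Gamma(1) = \Gamma(2) = 1$, we have $u(0) = u(1) = 0$ and $v(0) = v(1) = 0$, and the asserted double inequality is equivalent to $u > 0$ and $v > 0$ on $(0,1)$.

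First I would check the boundary behaviour using $\Gamma'(1) = -\gamma$ and $\Gamma'(2) = 1 - \gamma$. Short computations give
\[
u'(0) = 1 - \gamma, \quad u'(1) = 1 - 2\gamma, \quad v'(0) = 2\gamma - 1, \quad v'(1) = 3\gamma - 2,
\]
so $u'(0) > 0 > u'(1)$ and $v'(0) > 0 > v'(1)$; hence both $u$ and $v$ are strictly positive in one-sided neighbourhoods of $0$ and of $1$. The aim is to promote this local information to a global statement.

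The bulk of the work is ruling out interior sign changes. The most natural strategy is to show that $u$ and $v$ are unimodal on $(0,1)$ by proving $u'$ and $v'$ each have a unique zero there. Differentiating once more expresses $u''$ and $v''$ in terms of $\Gamma(x+1)$, $\psi(x+1)$, and $\psi'(x+1)$; one would then exploit the log-convexity of $\Gamma$ (positivity and monotonicity of the trigamma series $\psi'(x+1) = \sum_{n\geq 0}(n+x+1)^{-2}$) together with classical bounds on $\Gamma$ and $\psi$ on $[1,2]$---for instance the Wendel bounds in Lemma \ref{L-g(x+a)/g(x+1)>}---to show that each of $u''$ and $v''$ has at most one sign change on $(0,1)$. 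Combined with the boundary slopes above, this forces the required unimodality and hence $u,v>0$ on $(0,1)$.

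The main obstacle is exactly this sign analysis of $u''$ and $v''$: because $\Gamma$ has no elementary closed form, one must combine several transcendental estimates (for $\Gamma$, $\psi$, and $\psi'$ on $[1,2]$) against the polynomial derivatives of $x^{2}+1$, $x^{2}+2$, $x+1$, $x+2$ in a way that yields a single-signed expression on $(0,1)$. A cleaner alternative worth trying is the integral representation $\Gamma(x+1) = \int_{0}^{\infty} t^{x} e^{-t}\, dt$, which recasts each inequality as the positivity of an explicit one-parameter integral, which can sometimes be settled by writing the integrand as a manifestly positive combination; this trades the digamma bookkeeping for an integrand-positivity problem in a single variable.
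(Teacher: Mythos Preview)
The paper does not prove this lemma at all: it is quoted from Iv\'{a}dy's article \cite{Ivady-JMI-3(2)-2009} and stated without argument, so there is no ``paper's own proof'' to compare your approach against.

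As for your proposal itself, it is not a proof but an outline of a strategy, and the gap you yourself flag is a real one. The boundary computations for $u$ and $v$ are fine, and the idea of deducing positivity from unimodality is sound in principle. But the step on which everything rests---showing that each of $u''$ and $v''$ changes sign at most once on $(0,1)$---is never carried out, only described (``one would then exploit \ldots'', ``a cleaner alternative worth trying is \ldots''). Writing out $u''(x)$ gives
\[
u''(x)=\Gamma(x+1)\Bigl(2\psi(x+1)+(x+1)\bigl(\psi'(x+1)+\psi(x+1)^{2}\bigr)\Bigr)-2,
\]
and the analogous expression for $v''$ is similar; controlling the sign of these mixtures of $\Gamma$, $\psi$, and $\psi'$ on $(0,1)$ is precisely where the content lies, and neither the Wendel bounds of Lemma~\ref{L-g(x+a)/g(x+1)>} nor generic log-convexity of $\Gamma$ obviously suffice without further work. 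Until that step is actually executed (or replaced by a different argument that is carried through), the proposal remains a plausible plan rather than a proof.
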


\begin{lemma}[{\protect\cite[Lemma 1.7]{Batir-AM-91-2008}}]
\label{L-p(x+1)>}Let $x$ be a positive real number. Then we have 
\begin{equation*}
\psi (x+1)>\ln (x+\frac{1}{2}).
\end{equation*}
\end{lemma}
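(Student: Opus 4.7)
The plan is to prove the equivalent statement that $g(x):=\psi(x+1)-\ln(x+1/2)$ is strictly positive on $(0,\infty)$, by showing that $g$ is strictly decreasing with $\lim_{x\to\infty} g(x)=0$.

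First I would pass to the derivative. Using the standard series representation $\psi'(z)=\sum_{n=0}^{\infty}(z+n)^{-2}$, we have $\psi'(x+1)=\sum_{n=1}^{\infty}(x+n)^{-2}$. On the other hand, splitting $[x+1/2,\infty)$ into intervals of unit length gives
\begin{equation*}
\frac{1}{x+1/2}\;=\;\int_{x+1/2}^{\infty}\frac{du}{u^2}\;=\;\sum_{n=1}^{\infty}\int_{n-1}^{n}\frac{dt}{(x+t+1/2)^2}.
\end{equation*}
Subtracting,
\begin{equation*}
g'(x)\;=\;\psi'(x+1)-\frac{1}{x+1/2}\;=\;\sum_{n=1}^{\infty}\left[\frac{1}{(x+n)^2}-\int_{n-1}^{n}\frac{dt}{(x+t+1/2)^2}\right].
\end{equation*}

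The next step is a termwise convexity comparison. For fixed $x>0$ the map $t\mapsto(x+t+1/2)^{-2}$ is strictly convex on each interval $[n-1,n]$, whose midpoint $t=n-1/2$ gives the integrand value $(x+n)^{-2}$. The strict midpoint inequality for convex functions then yields
\begin{equation*}
\int_{n-1}^{n}\frac{dt}{(x+t+1/2)^2}\;>\;\frac{1}{(x+n)^2}\qquad\text{for every }n\ge 1,
\end{equation*}
so each summand in the series for $g'(x)$ is strictly negative and hence $g'(x)<0$ on $(0,\infty)$.

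Finally, from the classical asymptotic expansion $\psi(z)=\ln z-\tfrac{1}{2z}+O(z^{-2})$ one finds
\begin{equation*}
g(x)\;=\;\ln(x+1)-\ln(x+1/2)-\frac{1}{2(x+1)}+O(x^{-2})\;=\;O(x^{-2}),
\end{equation*}
since the two leading $1/(2x)$ contributions cancel; in particular $g(x)\to 0$ as $x\to\infty$. Combined with strict monotonicity of $g$, this forces $g(x)>0$ on $(0,\infty)$, which is the desired inequality. The main technical point is justifying the termwise differentiation of the psi-series and the unit-interval decomposition of $1/(x+1/2)$, both of which follow from uniform convergence on compact subsets of $(0,\infty)$; the actual \emph{content} of the argument is the strict midpoint inequality for the strictly convex integrand $t\mapsto(x+t+1/2)^{-2}$.
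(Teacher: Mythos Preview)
Your proof is correct. The key steps --- the series representation of $\psi'$, the unit-interval decomposition of $1/(x+1/2)$ as an integral, the strict Hermite--Hadamard midpoint inequality applied to the strictly convex integrand $t\mapsto (x+t+1/2)^{-2}$, and the asymptotic cancellation giving $g(x)\to 0$ --- are all valid and fit together cleanly.

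Note, however, that the paper does not supply its own proof of this lemma: it is quoted directly from \cite[Lemma~1.7]{Batir-AM-91-2008} and used as a black box. So there is no in-paper argument to compare against. Your self-contained proof via the monotone-decreasing-to-zero strategy for $g(x)=\psi(x+1)-\ln(x+\tfrac12)$ is a perfectly reasonable way to fill in the citation; Batir's original argument proceeds along similar lines (comparing $\psi'$ with $1/(x+1/2)$), so your approach is essentially in the same spirit as the cited source even though the paper itself omits the details.
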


\begin{lemma}[{\protect\cite[Lemma 7]{Yang-AAA-702718-AAA}, \protect\cite[%
Lemma 1]{Yang-arxiv-1409.6408}}]
\label{L-P-zp}Let $n\in \mathbb{N}$ and $m\in \mathbb{N}\cup \{0\}$ with $%
n>m $ and let $P_{n}\left( t\right) $ be an $n$ degrees polynomial defined
by 
\begin{equation}
P_{n}\left( t\right) =\sum_{i=m+1}^{n}a_{i}t^{i}-\sum_{i=0}^{m}a_{i}t^{i},
\label{2.4}
\end{equation}%
where $a_{n},a_{m}>0$, $a_{i}\geq 0$ for $0\leq i\leq n-1$ with $i\neq m$.
Then there is a unique number $t_{m+1}\in \left( 0,\infty \right) $ to
satisfy $P_{n}\left( t\right) =0$ such that $P_{n}\left( t\right) <0$ for $%
t\in \left( 0,t_{m+1}\right) $ and $P_{n}\left( t\right) >0$ for $t\in
\left( t_{m+1},\infty \right) $.
\end{lemma}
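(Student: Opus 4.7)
The plan is to reduce the claim to the monotonicity of the auxiliary function $f(t)=P_{n}(t)/t^{m}$ on $(0,\infty )$ (with the convention $f=P_{n}$ when $m=0$). Since $t^{m}>0$ for $t>0$, the signs of $P_{n}(t)$ and $f(t)$ agree on $(0,\infty )$, so it suffices to prove that $f$ is continuous and strictly increasing on $(0,\infty )$ with $f(0^{+})<0$ and $f(t)\to +\infty $ as $t\to \infty $. Once this is done, the unique zero of $f$ provided by the intermediate value theorem supplies the required $t_{m+1}$, and the sign pattern propagates to $P_{n}$.

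Expanding,
\[
f(t)=\sum_{i=m+1}^{n}a_{i}t^{i-m}-a_{m}-\sum_{i=0}^{m-1}a_{i}t^{i-m},
\]
so term-by-term differentiation gives
\[
f^{\prime }(t)=\sum_{i=m+1}^{n}(i-m)a_{i}t^{i-m-1}+\sum_{i=0}^{m-1}(m-i)a_{i}t^{i-m-1}.
\]
Every summand is non-negative on $(0,\infty )$, because $a_{i}\ge 0$ and the coefficients $i-m$ (respectively $m-i$) are strictly positive in the relevant index ranges. Moreover the term corresponding to $i=n$, namely $(n-m)a_{n}t^{n-m-1}$, is strictly positive since $a_{n}>0$ and $n>m$. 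Hence $f^{\prime }(t)>0$ for all $t>0$, and $f$ is strictly increasing on $(0,\infty )$.

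For the endpoint behavior, the leading term $a_{n}t^{n-m}$ forces $f(t)\to +\infty $ as $t\to \infty $. As $t\to 0^{+}$, either some $a_{i}>0$ with $i<m$ and the corresponding term with negative exponent $i-m<0$ drives $f(t)\to -\infty $, or all $a_{i}$ with $i<m$ vanish (which includes the case $m=0$, where the negative-exponent sum is empty) and then $f(0^{+})=-a_{m}<0$. In either situation $f(0^{+})<0<\lim_{t\to \infty }f(t)$, so strict monotonicity and continuity yield a unique zero $t_{m+1}\in (0,\infty )$, with $f<0$ to its left and $f>0$ to its right; multiplication by $t^{m}>0$ transfers the same sign pattern to $P_{n}$.

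The lemma is essentially structural rather than deep, so the only real work is the bookkeeping around the cases $m=0$ and possibly vanishing lower coefficients $a_{i}$ for $i<m$. Both variants are handled uniformly once one notices that the strict positivity of $f^{\prime }$ relies solely on the two hypotheses $a_{n}>0$ and $n>m$, while the sign of $f(0^{+})$ relies solely on the hypothesis $a_{m}>0$ (plus the non-negativity of the remaining coefficients to guarantee that no positive term can compete at $t=0$).
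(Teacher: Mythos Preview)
Your argument is correct. The paper does not supply its own proof of this lemma; it merely cites \cite[Lemma 7]{Yang-AAA-702718-AAA} and \cite[Lemma 1]{Yang-arxiv-1409.6408}, so there is nothing in the present paper to compare against. Your approach via the auxiliary function $f(t)=P_{n}(t)/t^{m}$ is clean and complete: the strict positivity of $f'$ follows from $a_{n}>0$ and $n>m$, the limit $f(0^{+})<0$ (finite or $-\infty$) follows from $a_{m}>0$ and the nonnegativity of the lower coefficients, and $f(t)\to+\infty$ is immediate from the leading term. Your handling of the edge cases $m=0$ and $a_{i}=0$ for $i<m$ is also sound.
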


\section{Proof of Theorem \protect\ref{MT-7/4}}

We are in a position to prove Theorem \ref{MT-7/4}.

\begin{proof}[Proof of Theorem \protect\ref{MT-7/4}]
Let us consider the ratio%
\begin{equation*}
F\left( r\right) =\frac{1-\left( 2/\pi \right) E\left( r\right) }{%
1-S_{11/4,7/4}\left( 1,r^{\prime }\right) }=\frac{11\left( 1-r^{\prime
7/4}\right) \left( 1-\left( 2/\pi \right) E\right) }{11\left( 1-r^{\prime
7/4}\right) -7\left( 1-r^{\prime 11/4}\right) }:=\frac{f_{1}\left( r\right) 
}{f_{2}\left( r\right) }.
\end{equation*}%
Clearly, $f_{1}\left( 0^{+}\right) =f_{2}\left( 0^{+}\right) =0$.

Differentiations by the formulas%
\begin{equation}
\frac{dK}{dr}=\frac{E-r^{\prime ^{2}}K}{rr^{\prime ^{2}}}\text{, \ }\frac{dE%
}{dr}=\frac{E-K}{r}\text{, \ }\frac{d\left( K-E\right) }{dr}=\frac{rE}{%
r^{\prime ^{2}}}  \label{dK-dE}
\end{equation}%
yield%
\begin{eqnarray*}
\frac{f_{1}^{\prime }\left( r\right) }{f_{2}^{\prime }\left( r\right) } &=&%
\frac{1-\left( 2/\pi \right) E+8\left( r^{\prime 1/4}-r^{\prime 2}\right)
\left( K-E\right) /\left( 7\pi r^{2}\right) }{1-r^{\prime }}:=\frac{%
f_{3}\left( r\right) }{f_{4}\left( r\right) }, \\
f_{3}\left( 0^{+}\right) &=&f_{4}\left( 0^{+}\right) =0,
\end{eqnarray*}%
\begin{eqnarray*}
\frac{7\pi }{2}\frac{f_{3}^{\prime }\left( r\right) }{f_{4}^{\prime }\left(
r\right) } &=&\frac{r^{\prime }}{r}\left( 4\frac{r^{\prime 1/4}-r^{\prime 2}%
}{rr^{\prime 2}}E-\frac{\left( 8-7r^{2}\right) r^{\prime 1/4}-8r^{\prime 2}}{%
r^{3}r^{\prime 2}}\left( K-E\right) -7\frac{E-K}{r}\right) \\
&=&\frac{\left( 8-3r^{2}\right) E-\left( 8-7r^{2}\right) K}{r^{4}r^{\prime
3/4}}+\left( \left( 7r^{2}+8\right) K-\left( 11r^{2}+8\right) E\right) \frac{%
r^{\prime }}{r^{4}},
\end{eqnarray*}%
\begin{eqnarray*}
\frac{7\pi }{2}\left( \frac{f_{3}^{\prime }\left( r\right) }{f_{4}^{\prime
}\left( r\right) }\right) ^{\prime } &=&-\frac{%
32K-32E-14r^{2}K-3r^{4}K-2r^{2}E}{r^{5}r^{\prime }} \\
&&+\frac{1}{4}\frac{128K-128E-224r^{2}K+93r^{4}K+160r^{2}E-21r^{4}E}{%
r^{5}r^{\prime 11/4}} \\
&:&=-\frac{f_{5}\left( r\right) -r^{\prime -7/4}f_{6}\left( r\right) }{%
r^{5}r^{\prime }}=:-\frac{f_{7}\left( r\right) }{r^{5}r^{\prime }},
\end{eqnarray*}%
where%
\begin{eqnarray*}
f_{5}\left( r\right) &=&32K-32E-14r^{2}K-3r^{4}K-2r^{2}E, \\
f_{6}\left( r\right) &=&\frac{1}{4}\left(
128K-128E-224r^{2}K+93r^{4}K+160r^{2}E-21r^{4}E\right) .
\end{eqnarray*}

If we can prove that $f_{7}\left( r\right) >0$ for $r\in \left( 0,1\right) $%
, that is, the function $f_{3}^{\prime }/f_{4}^{\prime }$ is decreasing on $%
\left( 0,1\right) $, then so is $f_{1}^{\prime }/f_{2}^{\prime }$ by Lemma %
\ref{L-f/g}, which in turn implies that $f_{1}/f_{2}$ by using Lemma \ref%
{L-f/g} again, that is, the function $F$ is strictly decreasing on $\left(
0,1\right) $. Next we prove that $f_{7}\left( r\right) >0$ for $r\in \left(
0,1\right) $ stepwise.

\textbf{Step 1}: Expanding $f_{7}\left( r\right) $ in power series. By the
expansions (\ref{Ke}) and (\ref{Ee}), we have%
\begin{eqnarray*}
f_{5}\left( r\right) &=&32K-32E-14Kr^{2}-3Kr^{4}-2r^{2}E \\
&=&\frac{\pi }{2}\left( 32\sum_{n=0}^{\infty }\frac{\left( \frac{1}{2}%
\right) _{n}^{2}}{n!n!}r^{2n}-32\sum_{n=0}^{\infty }\frac{\left( -\frac{1}{2}%
\right) _{n}\left( \frac{1}{2}\right) _{n}}{n!n!}r^{2n}\right. \\
&&\left. -14\sum_{n=1}^{\infty }\tfrac{\left( \frac{1}{2}\right) _{n-1}^{2}}{%
\left( n-1\right) !\left( n-1\right) !}r^{2n}-3\sum_{n=2}^{\infty }\tfrac{%
\left( \frac{1}{2}\right) _{n-2}^{2}}{\left( n-2\right) !\left( n-2\right) !}%
r^{2n}-2\sum_{n=1}^{\infty }\tfrac{\left( -\frac{1}{2}\right) _{n-1}\left( 
\frac{1}{2}\right) _{n-1}}{\left( n-1\right) !\left( n-1\right) !}%
r^{2n}\right) \\
&=&\frac{3\pi }{2}\sum_{n=3}^{\infty }\frac{n\left( 5n-6\right) \left(
n-1\right) \left( n-2\right) \left( \frac{1}{2}\right) _{n-2}^{2}}{n!n!}%
r^{2n}:=\frac{3\pi }{2}r^{6}\sum_{n=0}^{\infty }a_{n}r^{2n},
\end{eqnarray*}%
where%
\begin{equation}
a_{n}=\frac{\left( 5n+9\right) \left( \frac{1}{2}\right) _{n+1}^{2}}{%
n!\left( n+3\right) !}=\frac{5n+9}{n!\left( n+3\right) !}\left( \frac{\Gamma
\left( n+3/2\right) }{\Gamma \left( 1/2\right) }\right) ^{2};  \label{an}
\end{equation}%
\begin{eqnarray*}
f_{6}\left( r\right) &=&\frac{1}{4}\left(
128K-128E-224r^{2}K+93r^{4}K+160r^{2}E-21r^{4}E\right) \\
&=&\frac{\pi }{8}\left( 128\sum_{n=0}^{\infty }\frac{\left( \frac{1}{2}%
\right) _{n}^{2}}{n!n!}r^{2n}-128\sum_{n=0}^{\infty }\frac{\left( -\frac{1}{2%
}\right) _{n}\left( \frac{1}{2}\right) _{n}}{n!n!}r^{2n}\right. \\
&&-224\sum_{n=1}^{\infty }\frac{\left( \frac{1}{2}\right) _{n-1}^{2}}{\left(
n-1\right) !\left( n-1\right) !}r^{2n}+93\sum_{n=2}^{\infty }\frac{\left( 
\frac{1}{2}\right) _{n-2}^{2}}{\left( n-2\right) !\left( n-2\right) !}r^{2n}
\\
&&\left. +160\sum_{n=1}^{\infty }\frac{\left( -\frac{1}{2}\right)
_{n-1}\left( \frac{1}{2}\right) _{n-1}}{\left( n-1\right) !\left( n-1\right)
!}r^{2n}-21\sum_{n=2}^{\infty }\frac{\left( -\frac{1}{2}\right) _{n-2}\left( 
\frac{1}{2}\right) _{n-2}}{\left( n-2\right) !\left( n-2\right) !}%
r^{2n}\right) \\
&=&-\frac{3\pi }{16}\sum_{n=3}^{\infty }\frac{n\left( n-1\right) \left(
n-2\right) \left( n-4\right) \left( n+15\right) \left( 2n-5\right) \left( 
\frac{1}{2}\right) _{n-3}^{2}}{n!n!}r^{2n} \\
&:&=-\frac{3\pi }{16}r^{6}\sum_{n=0}^{\infty }c_{n}r^{2n},
\end{eqnarray*}%
where%
\begin{equation}
c_{n}=\frac{\left( n-1\right) \left( n+18\right) \left( 2n+1\right) \left( 
\frac{1}{2}\right) _{n}^{2}}{n!\left( n+3\right) !}=\frac{\left( n-1\right)
\left( n+18\right) \left( 2n+1\right) }{n!\left( n+3\right) !}\left( \frac{%
\Gamma \left( n+1/2\right) }{\Gamma \left( 1/2\right) }\right) ^{2}.
\label{cn}
\end{equation}%
Also, using binomial series we have%
\begin{equation}
r^{\prime -7/4}=\left( 1-r^{2}\right) ^{-7/8}=\sum_{n=0}^{\infty }\frac{%
\left( \frac{7}{8}\right) _{n}}{n!}r^{2n}:=\sum_{n=0}^{\infty }b_{n}r^{2n}.
\label{bn}
\end{equation}

Then making use of the Cauchy product gives%
\begin{eqnarray*}
f_{7}\left( r\right) &=&f_{5}\left( r\right) -\left( 1-r^{2}\right)
^{-7/8}f_{6}\left( r\right) =\frac{3\pi }{2}r^{6}\sum_{n=0}^{\infty
}a_{n}r^{2n}+\frac{3\pi }{16}r^{6}\sum_{n=0}^{\infty
}c_{n}r^{2n}\sum_{n=0}^{\infty }b_{n}r^{2n} \\
&=&\frac{3\pi }{16}r^{6}\sum_{n=0}^{\infty }\left(
8a_{n}+\sum_{k=0}^{n}b_{n-k}c_{k}\right) r^{2n}:=\frac{3\pi }{16}%
r^{6}\sum_{n=0}^{\infty }d_{n}r^{2n}.
\end{eqnarray*}

\textbf{Step 2}: A simple verification yields $d_{0}=d_{1}=d_{2}=d_{3}=0$
and $d_{4}$, $d_{5}$, $d_{6}$, $d_{7}$, $d_{8}$, $d_{9}$, $d_{10}$ are equal
to%
\begin{equation*}
\tfrac{35}{32\,768},\tfrac{903}{262\,144},\tfrac{7343}{1048\,576},\tfrac{%
193\,225}{16\,777\,216},\tfrac{36\,001\,035}{2147\,483\,648},\tfrac{%
387\,471\,275}{17\,179\,869\,184},\tfrac{7897\,834\,945}{274\,877\,906\,944},
\end{equation*}%
respectively.

\textbf{Step 3}: We prove that for $n\geq 10$,%
\begin{equation}
D_{n}:=\frac{8}{7}d_{n+1}-d_{n}>g\left( n\right) ,  \label{en>gn}
\end{equation}%
where%
\begin{equation}
g\left( n\right) =\frac{4}{\pi }\frac{n\left( 2n+1\right) }{\left(
n+1\right) \left( n+2\right) \left( n+3\right) }-\frac{3}{7\Gamma \left(
7/8\right) }\frac{n^{7/8}}{n+1}+\frac{128}{7\pi }\frac{n^{7/8}}{64n-9}%
g_{1}\left( n\right) ,  \label{gn}
\end{equation}%
here%
\begin{equation}
g_{1}\left( n\right) =\sum_{k=2}^{n}\frac{n-k}{n-k+1}\frac{\left( k-1\right)
\left( k+18\right) }{\left( k+1\right) \left( k+2\right) \left( k+3\right) }%
:=\sum_{k=2}^{n}\alpha _{n-k}\beta _{k}.  \label{g1n}
\end{equation}

To this end, we note that for $k\geq 0$,%
\begin{eqnarray*}
a_{k+1} &=&\frac{1}{4}\frac{\left( 2k+3\right) ^{2}\left( 5k+14\right) }{%
\left( 5k+9\right) \left( k+1\right) \left( k+4\right) }a_{k}, \\
b_{k+1} &=&\frac{k+7/8}{k+1}b_{k}, \\
c_{k+1} &=&\frac{1}{4}\frac{k\left( 2k+1\right) \left( 2k+3\right) \left(
k+19\right) }{\left( k-1\right) \left( k+1\right) \left( k+4\right) \left(
k+18\right) }c_{k}\text{ (}k\geq 2\text{)}.
\end{eqnarray*}%
Also, it is seen that $c_{0}=-3$, $c_{1}=0$, $c_{k}>0$ for $k\geq 2$, and $%
a_{k},b_{k}>0$ for $k\geq 0$. Then the sequence $D_{n}$ can be expressed as%
\begin{eqnarray*}
D_{n} &=&\frac{8}{7}d_{n+1}-d_{n}=\frac{8}{7}\left(
8a_{n+1}+b_{n+1}c_{0}+\sum_{k=2}^{n+1}b_{n+1-k}c_{k}\right) -\left(
8a_{n}+b_{n}c_{0}+\sum_{k=2}^{n}b_{n-k}c_{k}\right) \\
&=&8\left( \frac{8}{7}a_{n+1}-a_{n}\right) -3\left( \frac{8}{7}%
b_{n+1}-b_{n}\right) +\frac{8}{7}b_{0}c_{n+1}+\sum_{k=2}^{n}\left( \frac{8}{7%
}b_{n+1-k}-b_{n-k}\right) c_{k}
\end{eqnarray*}%
\begin{eqnarray*}
&=&8\left( \frac{8}{7}\frac{1}{4}\frac{\left( 2n+3\right) ^{2}\left(
5n+14\right) }{\left( 5n+9\right) \left( n+1\right) \left( n+4\right) }%
-1\right) a_{n}-3\left( \frac{8}{7}\frac{n+7/8}{n+1}-1\right) b_{n} \\
&&+\frac{8}{7}\frac{1}{4}\frac{n\left( 2n+1\right) \left( 2n+3\right) \left(
n+19\right) }{\left( n-1\right) \left( n+1\right) \left( n+4\right) \left(
n+18\right) }c_{n}+\sum_{k=2}^{n}\left( \frac{8}{7}\frac{n-k+7/8}{n-k+1}%
-1\right) b_{n-k}c_{k}
\end{eqnarray*}%
\begin{eqnarray*}
&=&\frac{8}{7}\frac{n\left( 5n^{2}-6n-29\right) }{\left( 5n+9\right) \left(
n+1\right) \left( n+4\right) }a_{n}-\frac{3}{7}\frac{n}{n+1}b_{n} \\
&&+\frac{2}{7}\frac{n\left( 2n+1\right) \left( 2n+3\right) \left(
n+19\right) }{\left( n-1\right) \left( n+1\right) \left( n+4\right) \left(
n+18\right) }c_{n}+\frac{1}{7}\sum_{k=2}^{n}\frac{n-k}{n-k+1}b_{n-k}c_{k}.
\end{eqnarray*}

Notice that coefficient of $a_{n}$ is positive due to $5n^{2}-6n-29>0$ for $%
n\geq 4$, by the double inequality (\ref{L-gi><}) in Lemma \ref%
{L-g(x+a)/g(x+1)>} it is derived that for $k\geq 0$,%
\begin{eqnarray*}
a_{k} &>&\frac{1}{2\pi }\frac{\left( 2k+1\right) \left( 5k+9\right) }{\left(
k+1\right) \left( k+2\right) \left( k+3\right) }, \\
\frac{1}{\left( k+7/8\right) ^{1/8}} &<&b_{k}<\frac{1}{k^{1/8}\Gamma \left(
7/8\right) }, \\
c_{k} &>&\frac{2}{\pi }\frac{\left( k-1\right) \left( k+18\right) }{\left(
k+1\right) \left( k+2\right) \left( k+3\right) }\text{ (}k\geq 2\text{).}
\end{eqnarray*}%
Applying these inequalities to the expression of $D_{n}$ yields%
\begin{eqnarray}
D_{n} &>&\frac{8}{7}\frac{n\left( 5n^{2}-6n-29\right) }{\left( 5n+9\right)
\left( n+1\right) \left( n+4\right) }\frac{1}{2\pi }\frac{\left( 2n+1\right)
\left( 5n+9\right) }{\left( n+1\right) \left( n+2\right) \left( n+3\right) }-%
\frac{3}{7}\frac{n}{n+1}\frac{1}{n^{1/8}\Gamma \left( 7/8\right) }  \notag \\
&&+\frac{2}{7}\frac{n\left( 2n+1\right) \left( 2n+3\right) \left(
n+19\right) }{\left( n-1\right) \left( n+1\right) \left( n+4\right) \left(
n+18\right) }\frac{2}{\pi }\frac{\left( n-1\right) \left( n+18\right) }{%
\left( n+1\right) \left( n+2\right) \left( n+3\right) }  \notag \\
&&+\frac{1}{7}\sum_{k=2}^{n}\left( \frac{n-k}{n-k+1}\frac{1}{\left(
n-k+7/8\right) ^{1/8}}\frac{2}{\pi }\frac{\left( k-1\right) \left(
k+18\right) }{\left( k+1\right) \left( k+2\right) \left( k+3\right) }\right)
.  \label{en>}
\end{eqnarray}

By the known inequality%
\begin{equation*}
\left( 1-x\right) ^{p}<1-px\text{, \ }p,x\in \left( 0,1\right)
\end{equation*}%
it is deduced that for $2\leq k\leq n$,%
\begin{eqnarray*}
\frac{1}{\left( n-k+7/8\right) ^{1/8}} &\geq &\frac{1}{\left( n-2+7/8\right)
^{1/8}}=\frac{1}{n^{1/8}\left( 1-9/\left( 8n\right) \right) ^{1/8}} \\
&>&\frac{1}{n^{1/8}\left( 1-9/\left( 64n\right) \right) }=64\frac{n^{7/8}}{%
64n-9},
\end{eqnarray*}%
which is used to the last member of the right hand side in (\ref{en>}) and
factoring gives%
\begin{eqnarray*}
D_{n} &>&\frac{4}{\pi }\frac{n\left( 2n+1\right) }{\left( n+1\right) \left(
n+2\right) \left( n+3\right) }-\frac{3}{7\Gamma \left( 7/8\right) }\frac{%
n^{7/8}}{n+1} \\
&&+\frac{128}{7\pi }\frac{n^{7/8}}{64n-9}\sum_{k=2}^{n}\left( \frac{n-k}{%
n-k+1}\frac{\left( k-1\right) \left( k+18\right) }{\left( k+1\right) \left(
k+2\right) \left( k+3\right) }\right) ,
\end{eqnarray*}%
which proves the step.

\textbf{Step 4}: The sequence $g_{1}\left( n\right) :=\sum_{k=2}^{n}\alpha
_{n-k}\beta _{k}$ defined by (\ref{g1n}) can be expressed as%
\begin{equation}
g_{1}\left( n\right) =\tfrac{n^{3}+7n^{2}-12n+24}{\left( n+2\right) \left(
n+3\right) \left( n+4\right) }\left( \psi \left( n+1\right) +\gamma \right) -%
\tfrac{n\left( 11n^{2}+8n+21\right) }{\left( n+1\right) \left( n+2\right)
\left( n+3\right) \left( n+4\right) },  \label{g1n-e}
\end{equation}%
where $\psi \left( t\right) $ denotes the psi function, $\gamma $ is the
Euler's constant.

In fact, decomposing rational function $\alpha _{n-k}\beta _{k}$ into
partial fractions gives%
\begin{eqnarray*}
\alpha _{n-k}\beta _{k} &=&\frac{n-k}{n-k+1}\frac{\left( k-1\right) \left(
k+18\right) }{\left( k+1\right) \left( k+2\right) \left( k+3\right) } \\
&=&-17\frac{n+1}{n+2}\times \frac{1}{k+1}+48\frac{n+2}{n+3}\times \frac{1}{%
k+2} \\
&&-30\frac{n+3}{n+4}\times \frac{1}{k+3}-\frac{n\left( n+19\right) }{\left(
n+2\right) \left( n+3\right) \left( n+4\right) }\times \frac{1}{n-k+1}.
\end{eqnarray*}%
Hence, we get%
\begin{eqnarray*}
\sum_{k=2}^{n}\alpha _{n-k}\beta _{k} &=&-17\frac{n+1}{n+2}\left(
\sum_{k=1}^{n}\frac{1}{k}+\frac{1}{n+1}-\frac{3}{2}\right) \\
&&+48\frac{n+2}{n+3}\left( \sum_{k=1}^{n}\frac{1}{k}+\frac{1}{n+1}+\frac{1}{%
n+2}-\frac{11}{6}\right) \\
&&-30\frac{n+3}{n+4}\left( \sum_{k=1}^{n}\frac{1}{k}+\frac{1}{n+1}+\frac{1}{%
n+2}+\frac{1}{n+3}-\frac{25}{12}\right) \\
&&-\frac{n\left( n+19\right) }{\left( n+2\right) \left( n+3\right) \left(
n+4\right) }\left( \sum_{k=1}^{n}\frac{1}{k}-\frac{1}{n}\right) \\
&=&\frac{n^{3}+7n^{2}-12n+24}{\left( n+2\right) \left( n+3\right) \left(
n+4\right) }\sum_{k=1}^{n}\frac{1}{k}-\frac{n\left( 11n^{2}+8n+21\right) }{%
\left( n+1\right) \left( n+2\right) \left( n+3\right) \left( n+4\right) },
\end{eqnarray*}%
which by the identity $\sum_{k=1}^{n}\frac{1}{k}=\psi \left( n+1\right)
+\gamma $ proves the step.

\textbf{Step 5}: We show that the sequence $g\left( n\right) $ defined by (%
\ref{gn}) satisfies the inequality%
\begin{equation}
g\left( n\right) >\frac{128}{7\pi }\frac{n^{7/8}}{64n-9}\frac{%
n^{3}+7n^{2}-12n+24}{\left( n+2\right) \left( n+3\right) \left( n+4\right) }%
g_{2}\left( n\right) ,  \label{gn>}
\end{equation}%
for $n\geq 4$, where%
\begin{eqnarray*}
g_{2}\left( n\right) &=&\ln \left( n+1/2\right) +\gamma -\frac{n\left(
11n^{2}+8n+21\right) }{\left( n+1\right) \left( n^{3}+7n^{2}-12n+24\right) }
\\
&&+\frac{7}{32}\sqrt[8]{n}\tfrac{\left( n+4\right) \left( 2n+1\right) \left(
64n-9\right) }{\left( n+1\right) \left( n^{3}+7n^{2}-12n+24\right) }-\frac{%
3\pi }{128\Gamma \left( 7/8\right) }\tfrac{\left( 64n-9\right) \left(
n+2\right) \left( n+3\right) \left( n+4\right) }{\left( n+1\right) \left(
n^{3}+7n^{2}-12n+24\right) }.
\end{eqnarray*}

The inequality (\ref{gn>}) follows by the Lemma \ref{L-p(x+1)>}.

\textbf{Step 6}: We show that $g_{2}\left( x\right) >0$ for $x\geq 10$.

Differentiation yields%
\begin{eqnarray*}
g_{2}^{\prime }\left( x\right) &=&-\frac{7}{256}\tfrac{\left(
896x^{7}+7346x^{6}+41\,033x^{5}-3438x^{4}-149\,813x^{3}-227\,064x^{2}-40%
\,824x+864\right) }{x^{7/8}\left( x^{4}+8x^{3}-5x^{2}+12x+24\right) ^{2}} \\
&&+\frac{3\pi }{128\Gamma \left( 7/8\right) }\tfrac{\left(
55x^{6}+3806x^{5}+17\,101x^{4}+216x^{3}-71\,514x^{2}-73\,824x-33\,840\right) 
}{\left( x^{4}+8x^{3}-5x^{2}+12x+24\right) ^{2}} \\
&&+\frac{2}{2x+1}+\tfrac{11x^{6}+16x^{5}+182x^{4}+72x^{3}-993x^{2}-384x-504}{%
\left( x^{4}+8x^{3}-5x^{2}+12x+24\right) ^{2}}.
\end{eqnarray*}

Application of the second inequality of (\ref{L-gi<>}) in Lemma \ref{L-g(x)<}
gives%
\begin{equation*}
\Gamma \left( \frac{7}{8}\right) <\left[ \frac{x^{2}+2}{\left( x+2\right) x}%
\right] _{x=7/8}=\frac{177}{161}<\frac{6}{5}
\end{equation*}%
and since $\pi >3$, it is acquired that%
\begin{eqnarray*}
g_{2}^{\prime }\left( x\right) &>&-\frac{7}{256}\tfrac{\left(
896x^{7}+7346x^{6}+41\,033x^{5}-3438x^{4}-149\,813x^{3}-227\,064x^{2}-40%
\,824x+864\right) }{x^{7/8}\left( x^{4}+8x^{3}-5x^{2}+12x+24\right) ^{2}} \\
&&+\frac{3\times 3}{128\times 6/5}\tfrac{\left(
55x^{6}+3806x^{5}+17\,101x^{4}+216x^{3}-71\,514x^{2}-73\,824x-33\,840\right) 
}{\left( x^{4}+8x^{3}-5x^{2}+12x+24\right) ^{2}} \\
&&+\frac{2}{2x+1}+\tfrac{11x^{6}+16x^{5}+182x^{4}+72x^{3}-993x^{2}-384x-504}{%
\left( x^{4}+8x^{3}-5x^{2}+12x+24\right) ^{2}} \\
&=&\tfrac{1}{256\left( x^{4}+8x^{3}-5x^{2}+12x+24\right) ^{2}}\left( \tfrac{%
g_{4}\left( x\right) }{\left( 2x+1\right) }-7\tfrac{g_{3}\left( x\right) }{%
x^{7/8}}\right) ,
\end{eqnarray*}%
where%
\begin{eqnarray*}
g_{3}\left( x\right) &=&896x^{7}+7346x^{6}+41\,033x^{5}-3438x^{4} \\
&&-149\,813x^{3}-227\,064x^{2}-40\,824x+864,
\end{eqnarray*}%
\begin{eqnarray*}
g_{4}\left( x\right)
&=&512x^{8}+15\,474x^{7}+153\,661x^{6}+638\,728x^{5}+482\,131x^{4} \\
&&-2496\,996x^{3}-3787\,398x^{2}-2184\,000x-341\,712.
\end{eqnarray*}

Making a change of variable $t=x-3\geq 7$ yields%
\begin{equation*}
\begin{array}{c}
\bigskip g_{3}\left( x\right)
=896t^{7}+26\,162t^{6}+342\,605t^{5}+2450\,487t^{4}+10\,008\,901t^{3} \\ 
+22\,815\,555t^{2}+26\,081\,658t+10\,797\,192>0,%
\end{array}%
\end{equation*}%
\begin{equation*}
\begin{array}{r}
\bigskip g_{4}\left( x\right)
=512t^{8}+27\,762t^{7}+607\,639t^{6}+7103\,356t^{5}+48\,333\,256t^{4} \\ 
+194\,587\,122t^{3}+448\,344\,153t^{2}+530\,387\,220t+235\,084\,068>0.%
\end{array}%
\end{equation*}%
Thus, to prove that $g_{2}^{\prime }\left( x\right) >0$ for $x\geq 10$, it
suffices to prove that%
\begin{equation*}
g_{5}\left( x\right) :=\ln \frac{g_{4}\left( x\right) }{2x+1}-\ln \left( 7%
\frac{g_{3}\left( x\right) }{x^{7/8}}\right) >0.
\end{equation*}

Differentiation again leads to%
\begin{eqnarray*}
g_{5}^{\prime }\left( x\right) &=&\frac{g_{4}^{\prime }\left( x\right) }{%
g_{4}\left( x\right) }-\frac{2}{2x+1}-\frac{g_{3}^{\prime }\left( x\right) }{%
g_{3}\left( x\right) }+\frac{7}{8}\frac{1}{x} \\
&=&\frac{1}{8}\frac{x^{4}+8x^{3}-5x^{2}+12x+24}{x\left( 2x+1\right) }\frac{%
g_{6}\left( x\right) }{g_{3}\left( x\right) g_{4}\left( x\right) },
\end{eqnarray*}%
where%
\begin{equation*}
\begin{array}{l}
g_{6}\left( x\right)
=6422\,528x^{12}+40\,606\,208x^{11}-29\,604\,936x^{10}-195\,118\,044x^{9}%
\bigskip \\ 
\multicolumn{1}{r}{-8157\,468\,886x^{8}-54\,727\,744\,833x^{7}-28\,074\,816%
\,632x^{6}\bigskip} \\ 
\multicolumn{1}{r}{-33\,746\,602\,635x^{5}-132\,036\,870\,576x^{4}-76\,358%
\,742\,474x^{3}\bigskip} \\ 
\multicolumn{1}{r}{-16\,962\,249\,960x^{2}-1692\,953\,136x-86\,111\,424.}%
\end{array}%
\end{equation*}%
Lemma \ref{L-P-zp} implies that the polynomial $g_{6}\left( x\right) $ has a
unique zero point $x_{0}\in \left( 0,\infty \right) $ such that $g_{6}\left(
x\right) <0$ for $x\in \left( 0,x_{0}\right) $ and $g_{6}\left( x\right) >0$
for $x\in \left( x_{0},\infty \right) $. This in combination with $%
g_{6}\left( 7\right) =56\,640\,373\,211\,408\,308>0$ indicates that $%
g_{6}\left( x\right) >0$ for $x\geq 7$. Therefore, $g_{5}^{\prime }\left(
x\right) >0$ for $x\geq 7$, and so%
\begin{equation*}
g_{5}\left( x\right) \geq g_{5}\left( 7\right) =\ln 44\,608\,161\,668-\frac{1%
}{8}\ln 7-\ln 15-\ln 2220\,734\,176=0.048\,79>0,
\end{equation*}%
which reveals that $g_{2}^{\prime }\left( x\right) >0$ for $x\geq 10$. It
follows that 
\begin{equation*}
g_{2}\left( x\right) \geq g_{2}\left( 10\right) =\gamma +\ln \tfrac{21}{2}-%
\tfrac{516\,789}{282\,304}\tfrac{\pi }{\Gamma \left( 7/8\right) }+\tfrac{%
649\,299}{282\,304}\sqrt[8]{10}-\tfrac{6005}{8822}\approx 0.037141>0.
\end{equation*}

\textbf{Step 7}: Steps 6 and 5 show that $g\left( n\right) >0$ for $n\geq 10$%
, which in conjunction with Step 3 yield%
\begin{equation*}
D_{n}=\frac{8}{7}d_{n+1}-d_{n}>g\left( n\right) >0,
\end{equation*}%
that is, $d_{n+1}>\left( 7/8\right) d_{n}$ for $n\geq 10$. Taking into
account Step 2, we conclude that $d_{n}=0$ for $n=0,1,2,3$ and $d_{n}>0$ for 
$n\geq 4$, and therefore, $f_{7}\left( r\right) =f_{5}\left( r\right)
-r^{\prime -7/4}f_{6}\left( r\right) >0$ for $r\in \left( 0,1\right) $. Thus
the function $f_{1}/f_{2}$, that is, $F$, is strictly decreasing on $\left(
0,1\right) $.

It follows from the decreasing property of the function $F$ on $\left(
0,1\right) $ that%
\begin{equation*}
\frac{11\left( \pi -2\right) }{4\pi }=\lim_{r\rightarrow 1^{-}}F\left(
r\right) <F\left( r\right) <\lim_{r\rightarrow 0^{+}}F\left( r\right) =1,
\end{equation*}%
which implies inequalities (\ref{MI-7/4-1}) and the first and second ones in
(\ref{MI-7/4-2}). The third one in (\ref{MI-7/4-2}) is equivalent to%
\begin{equation*}
\tfrac{22-7\pi }{4\pi }+\tfrac{11\left( \pi -2\right) }{4\pi }%
S_{11/4,7/4}\left( 1,r^{\prime }\right) -\tfrac{22}{7\pi }S_{11/4,7/4}\left(
1,r^{\prime }\right) =-\tfrac{11\left( 22-7\pi \right) }{28\pi }\left(
S_{11/4,7/4}\left( 1,r^{\prime }\right) -\tfrac{7}{11}\right) <0,
\end{equation*}%
where the inequality holds due to the increasing property of Stolarsky means
in their variables (P1) and $r^{\prime }\in \left( 0,1\right) $.

Since%
\begin{equation*}
\lim_{r\rightarrow 0^{+}}\frac{\left( 2/\pi \right) E\left( r\right) }{%
S_{11/4,7/4}\left( 1,r^{\prime }\right) }=1\text{ \ and \ }%
\lim_{r\rightarrow 1^{-}}\frac{\left( 2/\pi \right) E\left( r\right) }{%
S_{11/4,7/4}\left( 1,r^{\prime }\right) }=\frac{22}{7\pi },
\end{equation*}%
the coefficients $1$ and $22/\left( 7\pi \right) $ are the best.

Thus we complete the proof.
\end{proof}

\begin{remark}
From the Step 7 in previous proof, we have $d_{n+1}>\left( 7/8\right) d_{n}$
for $n\geq 10$. This is also valid for $4\leq n\leq 9$ by an easy
verification. It is derived that $d_{n}>\left( 7/8\right) ^{n-4}d_{4}$ for $%
n\geq 5$, and so we get%
\begin{equation*}
\sum_{n=0}^{\infty }d_{n}r^{2n}=\sum_{n=4}^{\infty
}d_{n}r^{2n}>d_{4}\sum_{n=4}^{\infty }\left( \frac{7}{8}\right) ^{n-4}r^{2n}=%
\frac{35}{4096}\frac{r^{8}}{8-7r^{2}}.
\end{equation*}%
Thus we obtain that%
\begin{equation*}
f_{7}\left( r\right) =f_{5}\left( r\right) -r^{\prime -7/4}f_{6}\left(
r\right) =\frac{3\pi }{16}r^{6}\sum_{n=0}^{\infty }d_{n}r^{2n}>\frac{105\pi 
}{2^{16}}\frac{r^{14}}{8-7r^{2}}.
\end{equation*}
\end{remark}

\section{Proof of Theorem \protect\ref{MT-2}}

Now we prove Theorems \ref{MT-2}.

\begin{proof}[Proof of Theorem \protect\ref{MT-2}]
Utilizing (\ref{Ee}) gives%
\begin{equation*}
1-\frac{2}{\pi }E\left( r\right) =-\sum_{n=1}^{\infty }\frac{\left( -\frac{1%
}{2}\right) _{n}\left( \frac{1}{2}\right) _{n}}{n!n!}r^{2n}:=\sum_{n=1}^{%
\infty }v_{n}r^{2n}.
\end{equation*}%
Applying binomial series to (\ref{S_5/2,2}) we have%
\begin{eqnarray*}
S_{5/2,2}\left( 1,r^{\prime }\right) &=&\left( \frac{4}{5}\frac{1-r^{\prime
5/2}}{1-r^{\prime 2}}\right) ^{2}=\frac{16}{25}\frac{\left( 1-r^{2}\right)
^{5/2}-2\left( 1-r^{2}\right) ^{5/4}+1}{r^{4}} \\
&=&\frac{16}{25}\frac{1}{r^{4}}\left( \sum_{n=0}^{\infty }\frac{\left( -%
\frac{5}{2}\right) _{n}}{n!}r^{2n}-2\sum_{n=0}^{\infty }\frac{\left( -\frac{5%
}{4}\right) _{n}}{n!}r^{2n}+1\right) \\
&=&1-\frac{16}{25}\sum_{n=1}^{\infty }\frac{2\left( -\frac{5}{4}\right)
_{n+2}-\left( -\frac{5}{2}\right) _{n+2}}{\left( n+2\right) !}r^{2n},
\end{eqnarray*}%
which implies that%
\begin{equation*}
1-S_{5/2,2}\left( 1,r^{\prime }\right) =\frac{16}{25}\sum_{n=1}^{\infty }%
\frac{2\left( -\frac{5}{4}\right) _{n+2}-\left( -\frac{5}{2}\right) _{n+2}}{%
\left( n+2\right) !}r^{2n}:=\sum_{n=1}^{\infty }u_{n}r^{2n}.
\end{equation*}%
Thus, the function $G\left( r\right) $ can be expressed as%
\begin{equation*}
G\left( r\right) =\frac{1-\left( 2/\pi \right) E\left( r\right) }{%
1-S_{5/2,2}\left( 1,r^{\prime }\right) }=\frac{\sum_{n=1}^{\infty
}v_{n}r^{2n}}{\sum_{n=1}^{\infty }u_{n}r^{2n}},
\end{equation*}%
where, by the formula $\left( a\right) _{n}=a\left( a+1\right) _{n-1}$, $%
v_{n}$ and $u_{n}$ can be written as%
\begin{eqnarray*}
v_{n} &=&\frac{1}{2}\frac{1}{n!n!}\left( \frac{1}{2}\right) _{n-1}\left( 
\frac{1}{2}\right) _{n}, \\
u_{n} &=&\frac{6}{5}\frac{1}{\left( n+2\right) !}\left( \frac{1}{2}\right)
_{n-1}+\frac{2}{5}\frac{1}{\left( n+2\right) !}\left( \frac{3}{4}\right)
_{n}.
\end{eqnarray*}

By Lemma \ref{L-ps-A/B}, due to $u_{n}>0$ for $n\geq 1$, to prove the
function $G$ is increasing on $\left( 0,1\right) $, it suffices to prove the
sequence $\{v_{n}/u_{n}\}$ is increasing for $n\geq 1$. In view of $v_{n}>0$
for $n\geq 1$, which suffices to check that $\left( v_{n+1}/v_{n}\right)
u_{n}-u_{n+1}>0$.

A simple verification yields%
\begin{equation*}
\frac{v_{n+1}}{v_{n}}=\frac{\left( n-1/2\right) \left( n+1/2\right) }{\left(
n+1\right) ^{2}},
\end{equation*}%
and%
\begin{equation*}
\frac{v_{n+1}}{v_{n}}u_{n}-u_{n+1}=\frac{3}{5}\frac{\left( 3n+1\right) }{%
\left( n+1\right) ^{2}\left( n+3\right) !}\left( \frac{1}{2}\right) _{n}+%
\frac{1}{10}\frac{n^{2}-11n-6}{\left( n+1\right) ^{2}\left( n+3\right) !}%
\left( \frac{3}{4}\right) _{n}.
\end{equation*}%
Direct computations give%
\begin{equation*}
\begin{array}{r}
\frac{v_{n+1}}{v_{n}}u_{n}-u_{n+1}=0,0,\frac{3}{81\,920},\frac{21}{512\,000},%
\frac{47}{1310\,720},\frac{1881}{64\,225\,280},\frac{157\,531}{6710\,886\,400%
},\bigskip \\ 
\frac{42\,559}{2264\,924\,160},\frac{507\,577}{33\,554\,432\,000},\frac{%
997\,177}{81\,201\,725\,440},\frac{20\,743\,573}{2061\,584\,302\,080}%
\end{array}%
\end{equation*}%
for $n=1$, $2$, ..., $11$, respectively. And, for $n\geq 12$, it is evident
that $\left( v_{n+1}/v_{n}\right) u_{n}-u_{n+1}>0$ due to $n^{2}-11n-6>0$.

Consequently, we obtain%
\begin{equation*}
1=\lim_{r\rightarrow 0^{+}}G\left( r\right) <G\left( r\right)
<\lim_{r\rightarrow 1^{-}}G\left( r\right) =\frac{25\left( \pi -2\right) }{%
9\pi },
\end{equation*}%
which implies inequalities (\ref{MI-2-1}) and the first and second ones in (%
\ref{MI-2-2}). The third one in (\ref{MI-2-2}) is equivalent to%
\begin{equation*}
-\tfrac{2\left( 8\pi -25\right) }{9\pi }+\tfrac{25\left( \pi -2\right) }{%
9\pi }S_{2,5/2}\left( 1,r^{\prime }\right) -\tfrac{25}{8\pi }S_{2,5/2}\left(
1,r^{\prime }\right) =\tfrac{25\left( 8\pi -25\right) }{72\pi }\left(
S_{2,5/2}\left( 1,r^{\prime }\right) -\tfrac{16}{25}\right) >0,
\end{equation*}%
where the inequality holds due to the increasing property of Stolarsky means
in their variables (P1) and $r^{\prime }\in \left( 0,1\right) $.

Since 
\begin{equation*}
\lim_{r\rightarrow 0^{+}}\frac{\left( 2/\pi \right) E\left( r\right) }{%
S_{5/2,2}\left( 1,r^{\prime }\right) }=1\text{ \ and \ }\lim_{r\rightarrow
1^{-}}\frac{\left( 2/\pi \right) E\left( r\right) }{S_{5/2,2}\left(
1,r^{\prime }\right) }=\frac{25}{8\pi },
\end{equation*}%
the coefficients $1$ and $25/\left( 8\pi \right) $ are the best.

This completes the proof.
\end{proof}

\begin{remark}
Denote by $w_{n}=v_{n}-u_{n}$. It is easy to verify that the relation%
\begin{equation*}
w_{n+1}-\frac{v_{n+1}}{v_{n}}w_{n}=\frac{v_{n+1}}{v_{n}}u_{n}-u_{n+1}
\end{equation*}%
holds for $n\geq 0$. From the proof of Theorem \ref{MT-2} we clearly see that%
\begin{equation}
w_{n+1}-\frac{v_{n+1}}{v_{n}}w_{n}>0  \label{w_n/v_n}
\end{equation}%
for $n\geq 3$, which, due to $w_{1}=w_{2}=w_{3}=0$, $w_{4}=3\times
2^{-14}/5>0$, means that $w_{n}>0$ for $n\geq 4$. This also yields%
\begin{equation*}
S_{5/2,2}\left( 1,r^{\prime }\right) -\frac{2}{\pi }E\left( r\right)
=\sum_{n=1}^{\infty }v_{n}r^{2n}-\sum_{n=1}^{\infty
}u_{n}r^{2n}=r^{8}\sum_{n=4}^{\infty }w_{n}r^{2n-8},
\end{equation*}%
so we have
\end{remark}

\begin{proposition}
The function%
\begin{equation*}
G_{1}\left( r\right) =\frac{S_{5/2,2}\left( 1,r^{\prime }\right) -\frac{2}{%
\pi }E\left( r\right) }{r^{8}}
\end{equation*}%
is convex and strictly increasing from $\left( 0,1\right) $ onto $\left(
\left( 3/5\right) 2^{-14},16/25-2/\pi \right) $. Consequently, we have%
\begin{equation*}
\frac{3}{5}\times 2^{-14}<\frac{S_{5/2,2}\left( 1,r^{\prime }\right) -\left(
2/\pi \right) E\left( r\right) }{r^{8}}<\frac{16}{25}-\frac{2}{\pi }\approx
0.003802.
\end{equation*}
\end{proposition}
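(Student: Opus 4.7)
My plan is to read off all three claims (monotonicity, convexity, and the range) directly from the power series representation already set up in the Remark preceding the proposition. From that Remark we have
\[
G_1(r) = \frac{S_{5/2,2}(1,r') - (2/\pi)E(r)}{r^8} = \sum_{k=0}^{\infty} w_{k+4}\, r^{2k},
\]
where $w_n = v_n - u_n$ satisfies $w_1 = w_2 = w_3 = 0$, $w_4 = (3/5)\cdot 2^{-14} > 0$, and, by the recursion $w_{n+1} - (v_{n+1}/v_n)\, w_n = (v_{n+1}/v_n)\, u_n - u_{n+1}$, which is positive for $n \geq 3$ as shown inside the proof of Theorem \ref{MT-2}, every $w_n$ with $n \geq 4$ is strictly positive. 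The series therefore has strictly positive coefficients and converges at least on $[0,1)$.

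From this representation both strict monotonicity and strict convexity fall out by termwise differentiation, which is legitimate inside the radius of convergence:
\[
G_1'(r) = \sum_{k=1}^{\infty} 2k\, w_{k+4}\, r^{2k-1}, \qquad G_1''(r) = \sum_{k=1}^{\infty} 2k(2k-1)\, w_{k+4}\, r^{2k-2},
\]
both of which are sums of strictly positive terms on $(0,1)$. Hence $G_1' > 0$ and $G_1'' > 0$ on $(0,1)$, giving the monotonicity and convexity assertions at once.

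For the endpoint values, as $r \to 0^+$ the power series yields $G_1(0^+) = w_4 = (3/5)\cdot 2^{-14}$, while as $r \to 1^-$ one has $E(1^-) = 1$ and, from the explicit formula (\ref{S_5/2,2}),
\[
\lim_{r \to 1^-} S_{5/2,2}(1,r') = \Bigl(\tfrac{4}{5}\Bigr)^2 = \tfrac{16}{25},
\]
so $G_1(1^-) = 16/25 - 2/\pi$. The two-sided inequality in the statement is then merely the image of the strictly increasing continuous function $G_1$ on $(0,1)$. I do not anticipate any real obstacle here: the Remark has already done the essential work of establishing that $w_n > 0$ for $n \geq 4$, so the proposition is a straightforward packaging of that positivity into the monotonicity/convexity language together with explicit endpoint evaluations.
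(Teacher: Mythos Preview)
Your proof is correct and is essentially the same approach as the paper's: the paper does not supply a separate proof for this proposition, but treats the preceding Remark (establishing $w_n>0$ for $n\geq 4$ and the power-series representation $G_1(r)=\sum_{k\geq 0}w_{k+4}r^{2k}$) as the proof content, from which monotonicity, convexity, and the endpoint values follow exactly as you spell out.
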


\begin{remark}
Further, the relation (\ref{w_n/v_n}) also indicates that for $n\geq 4$%
\begin{equation*}
w_{n}>\frac{w_{4}}{v_{4}}v_{n}=\frac{3\times 2^{-14}/5}{175\times 2^{-14}}%
v_{n}=\frac{3}{875}v_{n},
\end{equation*}%
and therefore,%
\begin{eqnarray*}
S_{5/2,2}\left( 1,r^{\prime }\right) -\frac{2}{\pi }E\left( r\right)
&=&\sum_{n=4}^{\infty }w_{n}r^{2n}>\frac{3}{875}\sum_{n=4}^{\infty
}v_{n}r^{2n}=\frac{3}{875}\left( \sum_{n=1}^{\infty
}v_{n}r^{2n}-\sum_{n=1}^{3}v_{n}r^{2n}\right) \\
&=&\frac{3}{875}\left( 1-\frac{2}{\pi }E\left( r\right) -\left( \frac{1}{4}%
r^{2}+\frac{3}{64}r^{4}+\frac{5}{256}r^{6}\right) \right) ,
\end{eqnarray*}

which can be stated as a proposition as follows:
\end{remark}

\begin{proposition}
For $r\in \left( 0,1\right) $, it holds that%
\begin{equation*}
\frac{2}{\pi }E\left( r\right) <\frac{875}{872}S_{5/2,2}\left( 1,r^{\prime
}\right) -\frac{3}{872}\left( 1-\frac{1}{4}r^{2}-\frac{3}{64}r^{4}-\frac{5}{%
256}r^{6}\right) .
\end{equation*}
\end{proposition}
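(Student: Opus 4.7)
The proposition is essentially the inequality sketched in the remark immediately preceding it, so my plan is to formalize that sketch and track the constants carefully. The starting point is the power series representations established in the proof of Theorem~\ref{MT-2}:
\[
1-\tfrac{2}{\pi}E(r)=\sum_{n=1}^{\infty}v_{n}r^{2n},\qquad 1-S_{5/2,2}(1,r')=\sum_{n=1}^{\infty}u_{n}r^{2n},
\]
and the sequence $w_{n}=v_{n}-u_{n}$, for which the proof of Theorem~\ref{MT-2} established $w_{1}=w_{2}=w_{3}=0$ together with the recursion-type inequality
\[
w_{n+1}-\frac{v_{n+1}}{v_{n}}w_{n}=\frac{v_{n+1}}{v_{n}}u_{n}-u_{n+1}>0\qquad (n\geq 3).
\]

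The main step is to convert this into a multiplicative comparison between $w_n$ and $v_n$. Dividing the displayed inequality by $v_{n+1}>0$ shows that the ratio $w_{n}/v_{n}$ is strictly increasing for $n\geq 3$. Since $w_{3}=0$, this gives $w_{n}/v_{n}\geq w_{4}/v_{4}$ for all $n\geq 4$. I would then compute the base value directly: using $v_{n}=\tfrac{1}{2}\tfrac{1}{n!n!}(\tfrac12)_{n-1}(\tfrac12)_{n}$ one gets $v_{4}=175\cdot 2^{-14}$, while from the proof $w_{4}=3\cdot 2^{-14}/5$. Hence $w_{4}/v_{4}=3/875$, yielding
\[
w_{n}\geq \tfrac{3}{875}\,v_{n}\qquad (n\geq 4).
\]

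Summing this inequality against $r^{2n}$ converts the coefficient estimate into a functional inequality. On the left I have $\sum_{n\geq 4}w_{n}r^{2n}=S_{5/2,2}(1,r')-\tfrac{2}{\pi}E(r)$ (using $w_{1}=w_{2}=w_{3}=0$), while on the right
\[
\sum_{n\geq 4}v_{n}r^{2n}=\Bigl(1-\tfrac{2}{\pi}E(r)\Bigr)-v_{1}r^{2}-v_{2}r^{4}-v_{3}r^{6},
\]
with the easy closed-form coefficients $v_{1}=\tfrac{1}{4}$, $v_{2}=\tfrac{3}{64}$, $v_{3}=\tfrac{5}{256}$. This produces
\[
S_{5/2,2}(1,r')-\tfrac{2}{\pi}E(r)>\tfrac{3}{875}\Bigl(1-\tfrac{2}{\pi}E(r)-\tfrac{1}{4}r^{2}-\tfrac{3}{64}r^{4}-\tfrac{5}{256}r^{6}\Bigr).
\]

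Solving this linear inequality for $(2/\pi)E(r)$ gives $(1-3/875)(2/\pi)E(r)<S_{5/2,2}(1,r')-\tfrac{3}{875}(1-\tfrac{1}{4}r^{2}-\tfrac{3}{64}r^{4}-\tfrac{5}{256}r^{6})$, and multiplying through by $875/872$ yields exactly the stated bound. There is no real obstacle here: every analytic fact is imported from the proof of Theorem~\ref{MT-2}, and the only potential pitfall is an arithmetic one, namely keeping the constants $3/875$, $175\cdot 2^{-14}$, and $v_{1},v_{2},v_{3}$ straight so that the final coefficients $875/872$ and $3/872$ come out correctly. So the task reduces to a careful, essentially mechanical verification.
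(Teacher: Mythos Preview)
Your proof is correct and follows exactly the same route as the paper: you use the inequality $w_{n+1}/v_{n+1}>w_{n}/v_{n}$ for $n\geq 3$ from the proof of Theorem~\ref{MT-2} to get $w_{n}\geq (w_{4}/v_{4})v_{n}=\tfrac{3}{875}v_{n}$ for $n\geq 4$, sum against $r^{2n}$, and rearrange. The only cosmetic point is that equality holds at $n=4$, so strictness of the final inequality comes from the terms with $n\geq 5$; this is implicit in both your argument and the paper's.
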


\section{Corollaries}

As direct consequences of Theorems \ref{MT-7/4} and \ref{MT-2}, we have

\begin{corollary}
\label{MC-1}Both the functions%
\begin{equation*}
r\mapsto \frac{2}{\pi }E\left( r\right) -S_{11/4,7/4}\left( 1,r^{\prime
}\right) \text{ \ and \ }r\mapsto \frac{\left( 2/\pi \right) E\left(
r\right) }{S_{11/4,7/4}\left( 1,r^{\prime }\right) }
\end{equation*}%
are strictly increasing from $\left( 0,1\right) $ onto $\left( 0,2/\pi
-7/11\right) $ and $\left( 1,22/\left( 7\pi \right) \right) $, respectively.
And therefore, we have%
\begin{eqnarray}
0 &<&\frac{2}{\pi }E\left( r\right) -S_{11/4,7/4}\left( 1,r^{\prime }\right)
<\frac{2}{\pi }-\frac{7}{11}\approx 0.00025614,  \label{Bi-E-S_7/4} \\
1 &<&\frac{\left( 2/\pi \right) E\left( r\right) }{S_{11/4,7/4}\left(
1,r^{\prime }\right) }<\frac{22}{7\pi }\approx 1.0004.  \label{Bi-E/S_7/4}
\end{eqnarray}
\end{corollary}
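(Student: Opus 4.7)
The plan is to derive both monotonicity statements directly from Theorem \ref{MT-7/4}, without any further calculus on $E$ itself. Set $D(r) = \tfrac{2}{\pi}E(r) - S_{11/4,7/4}(1,r')$ and $R(r) = \tfrac{(2/\pi)E(r)}{S_{11/4,7/4}(1,r')}$. The key algebraic identity I would use is
\begin{equation*}
D(r) \;=\; \bigl[1 - S_{11/4,7/4}(1,r')\bigr]\cdot\bigl[1 - F(r)\bigr],
\end{equation*}
which rewrites $D$ as the product of two nonnegative functions, each vanishing at $r=0^{+}$.

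For the first factor, property (P1) says $S_{11/4,7/4}(1,r')$ is increasing in $r'$; since $r' = \sqrt{1-r^2}$ is strictly decreasing in $r$, the factor $1 - S_{11/4,7/4}(1,r')$ is strictly increasing on $(0,1)$, rising from $0$ to $1 - 7/11 = 4/11$ (using $S_{11/4,7/4}(1,0) = (7/11)^{1/1} = 7/11$ from the defining formula). For the second factor, Theorem \ref{MT-7/4} gives that $F$ strictly decreases from $1$ to $11(\pi-2)/(4\pi)$, so $1 - F(r)$ strictly increases from $0$ to $(22-7\pi)/(4\pi)$. A product of two positive, strictly increasing functions on $(0,1)$ is strictly increasing, so $D$ is strictly increasing. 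The endpoint values then come from multiplying the limits: $D(0^{+}) = 0$ and
\begin{equation*}
D(1^{-}) \;=\; \frac{4}{11}\cdot\frac{22-7\pi}{4\pi} \;=\; \frac{22-7\pi}{11\pi} \;=\; \frac{2}{\pi} - \frac{7}{11},
\end{equation*}
which gives inequality \eqref{Bi-E-S_7/4}.

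For the ratio, I would write
\begin{equation*}
R(r) \;=\; 1 + \frac{D(r)}{S_{11/4,7/4}(1,r')},
\end{equation*}
and observe that $D(r)$ is a positive strictly increasing function on $(0,1)$ (just proved), while $S_{11/4,7/4}(1,r')$ is a positive strictly decreasing function on $(0,1)$ (by (P1) and the decrease of $r'$, with values in $(7/11,1)$). Therefore the quotient $D/S_{11/4,7/4}(1,r')$ is strictly increasing, and so is $R$. The limits $R(0^{+}) = 1$ and $R(1^{-}) = (2/\pi)/(7/11) = 22/(7\pi)$ complete \eqref{Bi-E/S_7/4}.

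There is no real obstacle here: the entire content is packaged in Theorem \ref{MT-7/4}. The only minor care needed is to confirm the boundary value $S_{11/4,7/4}(1,0) = 7/11$ from the Stolarsky formula \eqref{S_p,q} (which is straightforward) and to cite (P1) to get the monotonicity of $S_{11/4,7/4}(1,r')$ in $r$. Everything else is arithmetic.
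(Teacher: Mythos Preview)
Your proposal is correct and follows essentially the same approach as the paper: the paper also uses the factorization $D(r) = (1-F(r))(1-S_{11/4,7/4}(1,r'))$ together with (P1) and Theorem~\ref{MT-7/4} to handle the difference, and then writes the ratio as $1 + D(r)/S_{11/4,7/4}(1,r')$ exactly as you do. Your treatment of the endpoint values is slightly more explicit than the paper's, but the argument is otherwise identical.
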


\begin{proof}
By the increasing property of Stolarsky means in their variables and $%
r^{\prime }=\sqrt{1-r^{2}}$, it is seen that $S_{11/4,7/4}\left( 1,r^{\prime
}\right) $ is decreasing with respect to $r$ on $\left( 0,1\right) $, and so
both the functions 
\begin{equation*}
r\mapsto \left( 1-S_{11/4,7/4}\left( 1,r^{\prime }\right) \right) \text{ \
and \ }r\mapsto \frac{1}{S_{11/4,7/4}\left( 1,r^{\prime }\right) }
\end{equation*}%
are positive and increasing on $\left( 0,1\right) $.

From Theorem \ref{MT-7/4}, we see that $r\mapsto \left( 1-F\left( r\right)
\right) $ is positive and strictly increasing on $\left( 0,1\right) $. It
follows from the identity%
\begin{equation*}
\frac{2}{\pi }E\left( r\right) -S_{11/4,7/4}\left( 1,r^{\prime }\right)
=\left( 1-F\left( r\right) \right) \left( 1-S_{11/4,7/4}\left( 1,r^{\prime
}\right) \right)
\end{equation*}%
that the function $r\mapsto \left( 2/\pi \right) E\left( r\right)
-S_{11/4,7/4}\left( 1,r^{\prime }\right) $ is also positive and strictly
increasing on $\left( 0,1\right) $. Therefore, the double inequality (\ref%
{Bi-E-S_7/4}) is valid.

Making use of the assertion proved previously, and noting that%
\begin{equation*}
\frac{\left( 2/\pi \right) E\left( r\right) }{S_{11/4,7/4}\left( 1,r^{\prime
}\right) }=1+\left( \frac{2}{\pi }E\left( r\right) -S_{11/4,7/4}\left(
1,r^{\prime }\right) \right) \times \frac{1}{S_{11/4,7/4}\left( 1,r^{\prime
}\right) }
\end{equation*}%
gives the desired assertion. Then the estimate inequalities (\ref{Bi-E/S_7/4}%
) follow.
\end{proof}

Using the same technique\ we can prove

\begin{corollary}
\label{MC-2}Both the functions%
\begin{equation*}
r\mapsto \frac{2}{\pi }E\left( r\right) -S_{5/2,2}\left( 1,r^{\prime
}\right) \text{ \ and \ }r\mapsto \frac{\left( 2/\pi \right) E\left(
r\right) }{S_{5/2,2}\left( 1,r^{\prime }\right) }
\end{equation*}%
are strictly decreasing from $\left( 0,1\right) $ onto $\left( 2/\pi
-16/25,0\right) $ and $\left( 25/\left( 8\pi \right) ,1\right) $. And
therefore, we have%
\begin{eqnarray}
-0.0033802 &\approx &\frac{2}{\pi }-\frac{16}{25}<\frac{2}{\pi }E\left(
r\right) -S_{2,5/2}\left( 1,r^{\prime }\right) <0,  \label{Bi-E-S_2} \\
0.99472 &\approx &\frac{25}{8\pi }<\frac{\left( 2/\pi \right) E\left(
r\right) }{S_{5/2,2}\left( 1,r^{\prime }\right) }<1.  \label{Bi-E/S_2}
\end{eqnarray}
\end{corollary}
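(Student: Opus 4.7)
The plan is to mirror the argument used for Corollary \ref{MC-1}, but with careful attention to sign flips that occur because the relevant ratio $G(r)$ is increasing rather than decreasing. As a preliminary, property (P1) together with the fact that $r'=\sqrt{1-r^{2}}$ is decreasing in $r$ makes $S_{5/2,2}(1,r')$ strictly decreasing in $r$ on $(0,1)$, with range $(16/25,1)$. Consequently $1-S_{5/2,2}(1,r')$ is positive and strictly increasing on $(0,1)$, and $1/S_{5/2,2}(1,r')$ is positive and strictly increasing as well.

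For the difference, I would invoke Theorem \ref{MT-2}: since $G(r)$ is strictly increasing from $(0,1)$ onto $\bigl(1,25(\pi-2)/(9\pi)\bigr)$, the function $1-G(r)$ is strictly decreasing and negative throughout $(0,1)$. Using the identity
\begin{equation*}
\frac{2}{\pi}E(r)-S_{5/2,2}(1,r')=\bigl(1-G(r)\bigr)\bigl(1-S_{5/2,2}(1,r')\bigr),
\end{equation*}
the right-hand side is the product of a negative strictly decreasing function with a positive strictly increasing function. Passing to absolute values, one has a product of two positive strictly increasing quantities, which is positive and strictly increasing; hence the original signed product is negative and strictly decreasing. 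The endpoint values $0$ at $r\to 0^{+}$ and $2/\pi-16/25$ at $r\to 1^{-}$ follow from $E(0^{+})=\pi/2$, $E(1^{-})=1$, $S_{5/2,2}(1,1)=1$, and $S_{5/2,2}(1,0)=16/25$.

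For the ratio I would use the decomposition
\begin{equation*}
\frac{(2/\pi)E(r)}{S_{5/2,2}(1,r')}=1+\Bigl(\tfrac{2}{\pi}E(r)-S_{5/2,2}(1,r')\Bigr)\cdot\frac{1}{S_{5/2,2}(1,r')}.
\end{equation*}
The difference in parentheses is negative and strictly decreasing (by the previous step), while $1/S_{5/2,2}(1,r')$ is positive and strictly increasing; the same sign-tracking argument gives that the correction term is negative and strictly decreasing, so the ratio itself is strictly decreasing on $(0,1)$. Its endpoint limits are $1$ at $r\to 0^{+}$ and $(2/\pi)/(16/25)=25/(8\pi)$ at $r\to 1^{-}$, yielding the stated range.

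The only genuine obstacle is the sign bookkeeping: in Corollary \ref{MC-1} both factors of the analogous product were positive and increasing, which made the conclusion immediate, whereas here one factor is negative and the direction of monotonicity reverses. Once this is noted, the proof is a direct transcription of the argument given for Corollary \ref{MC-1}.
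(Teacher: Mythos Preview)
Your proposal is correct and follows exactly the approach the paper intends: the paper itself gives no independent proof of Corollary~\ref{MC-2} but simply states ``Using the same technique we can prove'' after the proof of Corollary~\ref{MC-1}. Your careful sign bookkeeping (passing to absolute values to recognize a product of two positive increasing functions) is precisely the small adaptation needed when $G$ is increasing rather than decreasing, and the endpoint computations are correct.
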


Now we give a monotonicity property for the ratio $\left( 2/\pi \right)
E\left( r\right) /S_{9/2-p,p}\left( 1,r^{\prime }\right) $ with respect to $%
r $ for $p\in (-\infty ,9/4]$. To this end, we need a known statement proved
in \cite[Theorem 5]{Losonczi-PMD-75(1-2)-2009} by Losonczi (see also \cite[%
Theorem 3.4]{Yang-IJMMS-591382-2009}).

\begin{lemma}
\label{RS_p-m}For fixed $c>0$, $0<x<y<z$, the function%
\begin{equation*}
p\mapsto \frac{S_{2c-p,p}\left( x,y\right) }{S_{2c-p,p}\left( x,z\right) }%
:=R_{2c-p,p}\left( x,y,z\right)
\end{equation*}%
is strictly decreasing on $(-\infty ,c]$ and strictly increasing on $%
[c,\infty )$.
\end{lemma}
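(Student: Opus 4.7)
The plan is to exploit the built-in symmetry $S_{p,q}=S_{q,p}$ of Stolarsky means in order to reduce the claim to a one-sided monotonicity statement, and then to reduce that further to a sign condition on a mixed partial derivative.

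First, since $S_{2c-p,p}(a,b)=S_{p,2c-p}(a,b)$, the map $p\mapsto S_{2c-p,p}(a,b)$ is invariant under the involution $p\mapsto 2c-p$. Applied separately to $(a,b)=(x,y)$ and $(a,b)=(x,z)$, this gives $R_{2c-p,p}(x,y,z)=R_{p,2c-p}(x,y,z)$, so $R$ is symmetric about $p=c$. It therefore suffices to prove that $R$ is strictly increasing on $[c,\infty)$; the decreasing behavior on $(-\infty,c]$ then follows from the symmetry.

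Next, by the homogeneity of $S_{p,q}$, I normalize $x=1$ and write $y=e^{s}$, $z=e^{t}$ with $0<s<t$. Setting
\begin{equation*}
M(p,s)=\log S_{2c-p,p}(1,e^{s})=s+\frac{1}{2(c-p)}\log\frac{p(1-e^{-(2c-p)s})}{(2c-p)(1-e^{-ps})}
\end{equation*}
(with the natural limits at $p\in\{0,c,2c\}$), the quantity to analyze is $\Phi(p)=M(p,s)-M(p,t)$. The symmetry forces $\Phi'(c)=0$, so monotonicity on $[c,\infty)$ amounts to $\Phi'(p)>0$ for $p>c$, which is equivalent to $\partial_{s}\partial_{p}M(p,s)<0$ for every $p>c$ and every $s>0$.

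For the final step, one computes $\partial_{p}M$ from the explicit formula and uses the identity $(1-e^{-ws})/w=\int_{0}^{s}e^{-wu}\,du$ to express $\partial_{p}M(p,s)$ as a ratio of integrals over $[0,s]$ against the exponential weights $e^{-pu}$ and $e^{-(2c-p)u}$. Differentiating once more in $s$, the required inequality reduces to a Chebyshev/covariance-type comparison for suitably ordered monotone functions against this one-parameter family of densities. The main obstacle is precisely this last verification: the symmetry only pins $\Phi'(c)=0$ and supplies no direct information about the mixed partial itself. The cleanest route I know—and the spirit of the Losonczi proof cited in the statement—is through an integral (probabilistic) representation of $\log S_{p,q}(x,y)$ as the expectation of $\log$ of a convex combination of $x$ and $y$ under a $(p,q)$-dependent density on $[0,1]$, combined with a Schur/rearrangement argument showing that this density spreads further from the midpoint as $|p-c|$ grows; this monotone spreading is exactly what translates into the required sign of $\partial_{s}\partial_{p}M$.
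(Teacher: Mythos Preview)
The paper does not prove this lemma at all: it is quoted verbatim as a known result of Losonczi (and independently Yang), with a citation and no argument. So there is no ``paper's own proof'' to compare against; the relevant question is simply whether your proposal stands on its own.

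Your reduction is sound as far as it goes. The symmetry $S_{p,q}=S_{q,p}$ does make $p\mapsto R_{2c-p,p}(x,y,z)$ invariant under $p\mapsto 2c-p$, so it suffices to treat $p\ge c$; the normalization $x=1$, $y=e^{s}$, $z=e^{t}$ with $0<s<t$ and the passage to $\Phi(p)=M(p,s)-M(p,t)$ are also fine, and the observation that $\Phi'(c)=0$ is a correct consequence of the symmetry.

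The gap is exactly where you flag it: you never verify the sign of $\partial_{s}\partial_{p}M(p,s)$ for $p>c$. Saying that ``the required inequality reduces to a Chebyshev/covariance-type comparison'' and then invoking, in outline only, an integral representation of $\log S_{p,q}$ together with a ``Schur/rearrangement argument'' is not a proof---it is a description of a possible strategy. Neither the claimed reduction to a covariance inequality nor the monotone-spreading property of the density is established, and these are precisely the nontrivial content of the lemma. In Losonczi's paper the monotonicity of such ratios is obtained through a careful analysis of the sign of the relevant partial derivative (equivalently, a log-convexity/Schur-type property of $(p,q)\mapsto \log S_{p,q}$), and that computation cannot be skipped. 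As written, your proposal is a plausible plan with the decisive step left open.
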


\begin{corollary}
\label{MC-E/S_p-mon.}Let $p\in (-\infty ,9/4]$. The the function%
\begin{equation*}
r\mapsto \frac{\left( 2/\pi \right) E\left( r\right) }{S_{9/2-p,p}\left(
1,r^{\prime }\right) }:=R_{p}\left( r\right)
\end{equation*}%
is strictly increasing from $\left( 0,1\right) $ onto $\left( 1,2/\left( \pi
\theta _{p}\right) \right) $ if and only if $p\in (-\infty ,7/4]$ and
strictly decreasing from $\left( 0,1\right) $ onto $\left( 2/\left( \pi
\theta _{p}\right) ,1\right) $ if $p\in \lbrack 2,9/4]$. Consequently, we
have%
\begin{eqnarray}
S_{9/2-p,p}\left( 1,r^{\prime }\right) &<&\frac{2}{\pi }E\left( r\right) <%
\frac{2}{\pi \theta _{p}}S_{9/2-p,p}\left( 1,r^{\prime }\right) \text{ if }%
p\in (0,\frac{7}{4}],  \label{E/S_p-p<7/4} \\
\frac{2}{\pi \theta _{p}}S_{9/2-p,p}\left( 1,r^{\prime }\right) &<&\frac{2}{%
\pi }E\left( r\right) <S_{9/2-p,p}\left( 1,r^{\prime }\right) \text{ if }%
p\in \lbrack 2,\frac{9}{4}],  \label{E/S_p-p>2}
\end{eqnarray}%
where the coefficients $1$ and $2/\left( \pi \theta _{p}\right) $ are the
best possible, here $\theta _{p}$ is defined by (\ref{theta_p(c)}).
\end{corollary}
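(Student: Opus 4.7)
The plan is to reduce the general parameter $p$ to the two anchor values $p=7/4$ and $p=2$, for which the monotonicity of $R_p$ has already been established in Corollaries \ref{MC-1} and \ref{MC-2}. Concretely, for $p\le 7/4$ I would factor
\[
R_p(r) = R_{7/4}(r)\cdot H_p(r), \qquad H_p(r) := \frac{S_{11/4,7/4}(1,r')}{S_{9/2-p,p}(1,r')},
\]
and analogously $R_p(r) = R_2(r)\cdot \bigl[S_{5/2,2}(1,r')/S_{9/2-p,p}(1,r')\bigr]$ for $p\ge 2$. Since both factors are positive, the strict monotonicity of $R_p$ follows from the product rule for co-monotone positive functions, provided the Stolarsky-quotient factor is strictly monotone in $r$ in the same direction as the first factor.

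Establishing that $r$-monotonicity is the core of the argument, and it is where Lemma \ref{RS_p-m} enters. For any $r_1<r_2$ in $(0,1)$ one has $1<1/r_1'<1/r_2'$, so with $c=9/4$ and $(x,y,z)=(1,1/r_1',1/r_2')$ the lemma yields that
\[
p \longmapsto \frac{S_{9/2-p,p}(1,1/r_1')}{S_{9/2-p,p}(1,1/r_2')}
\]
is strictly decreasing on $(-\infty,9/4]$. The homogeneity relation $S_{9/2-p,p}(1,r_i')=r_i'\,S_{9/2-p,p}(1,1/r_i')$ contributes only a $p$-independent factor $r_1'/r_2'$, which cancels in any comparison at two different parameters; hence $p\mapsto S_{9/2-p,p}(1,r_1')/S_{9/2-p,p}(1,r_2')$ is also strictly decreasing on $(-\infty,9/4]$. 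Reading this at the parameter pair $(p,7/4)$ for $p<7/4$ (respectively $(2,p)$ for $p>2$) and rearranging gives exactly the monotonicity of $H_p$ (respectively its $p=2$ analogue) in $r$ that the decomposition requires. The boundary values $R_p(0^+)=1$ and $R_p(1^-)=2/(\pi\theta_p)$ then follow from $S_{9/2-p,p}(1,1)=1$ together with the definition (\ref{theta_p(c)}) of $\theta_p$, and the inequalities (\ref{E/S_p-p<7/4})--(\ref{E/S_p-p>2}) with their best constants drop out.

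The remaining "only if" in the increasing claim is handled by a Taylor expansion: from (\ref{E-S_p}),
\[
R_p(r)-1 = \frac{(4p-7)(4p-11)}{5\cdot 2^{14}}\, r^8 + O(r^{10}),
\]
so for $p\in(7/4,9/4]\subset(7/4,11/4)$ the leading coefficient is strictly negative and $R_p(r)<1=R_p(0^+)$ for small $r>0$, precluding global monotone increase. I expect the principal technical obstacle to be the homogeneity-plus-symmetry bookkeeping that converts Lemma \ref{RS_p-m}, stated in a three-variable form, into the two-variable $r$-monotonicity required for $H_p$; once that translation is in place, the rest is an assembly of results already available in the paper.
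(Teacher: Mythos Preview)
Your proposal is correct and follows essentially the same route as the paper: the factorization $R_p=R_{p_0}\cdot(S_{9/2-p_0,p_0}(1,r')/S_{9/2-p,p}(1,r'))$ with anchor $p_0=7/4$ (resp.\ $p_0=2$), Lemma~\ref{RS_p-m} combined with homogeneity and symmetry to get the $r$-monotonicity of the Stolarsky quotient, and the expansion (\ref{E-S_p}) for the ``only if'' part. Your Taylor argument for necessity is in fact a bit more direct than the paper's L'Hospital manipulation, but the content is identical; the one point you should make explicit in the write-up is the limit $S_{9/2-p,p}(1,0^+)=\theta_p$ (valid for $p\in(0,9/2)$), since that is what pins down $R_p(1^-)=2/(\pi\theta_p)$ and hence the sharpness of the constants.
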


\begin{proof}
(i) The necessity can be derived from%
\begin{equation*}
\lim_{r\rightarrow 0^{+}}\frac{d\left( \ln R_{p}\left( r\right) \right) /dr}{%
8r^{7}}\geq 0.
\end{equation*}%
From $\lim_{r\rightarrow 0^{+}}R_{p}\left( r\right) =\lim_{r\rightarrow
0^{+}}S_{9/2-p,p}\left( 1,r^{\prime }\right) =1$ and L'Hospitial rule it is
obtained that%
\begin{eqnarray*}
1 &=&\lim_{r\rightarrow 0^{+}}\left( \frac{\ln R_{p}\left( r\right) }{%
R_{p}\left( r\right) -1}\frac{1}{S_{9/2-p,p}\left( 1,r^{\prime }\right) }%
\right) \\
&=&\lim_{r\rightarrow 0^{+}}\frac{\ln R_{p}\left( r\right) }{\left( 2/\pi
\right) E\left( r\right) -S_{9/2-p,p}\left( 1,r^{\prime }\right) }%
=\lim_{r\rightarrow 0^{+}}\frac{d\left( \ln R_{p}\left( r\right) \right) /dr%
}{d\left( \left( 2/\pi \right) E\left( r\right) -S_{9/2-p,p}\left(
1,r^{\prime }\right) \right) /dr}.
\end{eqnarray*}%
By the expansion (\ref{E-S_p}) and L'Hospitial rule we have%
\begin{eqnarray*}
\frac{\left( 4p-7\right) \left( 4p-11\right) }{5\times 2^{14}}
&=&\lim_{r\rightarrow 0^{+}}\frac{\left( 2/\pi \right) E\left( r\right)
-S_{9/2-p,p}\left( 1,r^{\prime }\right) }{r^{8}} \\
&=&\lim_{r\rightarrow 0^{+}}\frac{d\left( \left( 2/\pi \right) E\left(
r\right) -S_{9/2-p,p}\left( 1,r^{\prime }\right) \right) /dr}{8r^{7}}.
\end{eqnarray*}%
It follows that%
\begin{eqnarray*}
&&\lim_{r\rightarrow 0^{+}}\frac{d\left( \ln R_{p}\left( r\right) \right) /dr%
}{8r^{7}} \\
&=&\lim_{r\rightarrow 0^{+}}\left( \frac{d\left( \ln R_{p}\left( r\right)
\right) /dr}{d\left( \left( 2/\pi \right) E\left( r\right)
-S_{9/2-p,p}\left( 1,r^{\prime }\right) \right) /dr}\frac{d\left( \left(
2/\pi \right) E\left( r\right) -S_{9/2-p,p}\left( 1,r^{\prime }\right)
\right) /dr}{8r^{7}}\right) \\
&=&\frac{\left( 4p-7\right) \left( 4p-11\right) }{5\times 2^{14}},
\end{eqnarray*}%
which together with $p\in (-\infty ,9/4]$ gives the necessary condition $%
p\in (-\infty ,7/4]$.

(ii) To prove that the condition $p\in (-\infty ,7/4]$ is necessary, we have
to prove that the function%
\begin{equation*}
r\mapsto \frac{S_{9/2-p_{0},p_{0}}\left( 1,r^{\prime }\right) }{%
S_{9/2-p,p}\left( 1,r^{\prime }\right) }:=R_{p_{0},p}^{\ast }\left(
r^{\prime }\right)
\end{equation*}%
is strictly increasing (decreasing) on $\left( 0,1\right) $ if $p<\left(
>\right) p_{0}$. Due to $r^{\prime }=\sqrt{1-r^{2}}$, it suffices to prove
that $r^{\prime }\mapsto R_{p_{0},p}^{\ast }\left( r^{\prime }\right) $ is
strictly decreasing (increasing) on $\left( 0,1\right) $ if $p<\left(
>\right) p_{0}$. Assume that $r_{1}^{\prime }$, $r_{2}^{\prime }\in \left(
0,1\right) $ with $r_{1}^{\prime }<r_{2}^{\prime }$. Then $1<1/r_{2}^{\prime
}<1/r_{1}^{\prime }$. By Lemma \ref{RS_p-m}, we have%
\begin{equation*}
\frac{S_{2c-p_{0},p_{0}}\left( 1,1/r_{2}^{\prime }\right) }{%
S_{2c-p_{0},p_{0}}\left( 1,1/r_{1}^{\prime }\right) }<\left( >\right) \frac{%
S_{2c-p,p}\left( 1,1/r_{2}^{\prime }\right) }{S_{2c-p,p}\left(
1,1/r_{1}^{\prime }\right) },
\end{equation*}%
which is equivalent to%
\begin{equation*}
\frac{S_{2c-p_{0},p_{0}}\left( 1,1/r_{2}^{\prime }\right) }{S_{2c-p,p}\left(
1,1/r_{2}^{\prime }\right) }<\left( >\right) \frac{S_{2c-p_{0},p_{0}}\left(
1,1/r_{1}^{\prime }\right) }{S_{2c-p,p}\left( 1,1/r_{1}^{\prime }\right) }.
\end{equation*}%
From the homogeneity and symmetry of Stolarsky means with respect to their
variables, this shows that $r^{\prime }\mapsto R_{p_{0},p}^{\ast }\left(
r^{\prime }\right) $ is strictly decreasing (increasing) on $\left(
0,1\right) $ if $p<\left( >\right) p_{0}$.

Now, if $p_{0}=7/4$ and $p\in (-\infty ,7/4)$, then $r\mapsto
R_{p_{0},p}^{\ast }\left( r^{\prime }\right) $ is positive and strictly
increasing on $\left( 0,1\right) $. While Corollary \ref{MC-1} tells us that 
$r\mapsto R_{p_{0}}\left( r\right) $ is also. It is deduced by the relation 
\begin{equation*}
R_{p}\left( r\right) =\frac{\left( 2/\pi \right) E\left( r\right) }{%
S_{9/2-p_{0},p_{0}}\left( 1,r^{\prime }\right) }\times \frac{%
S_{9/2-p_{0},p_{0}}\left( 1,r^{\prime }\right) }{S_{9/2-p,p}\left(
1,r^{\prime }\right) }:=R_{p_{0}}\left( r\right) \times R_{p_{0},p}^{\ast
}\left( r^{\prime }\right)
\end{equation*}%
that $r\mapsto R_{p}\left( r\right) $ is also strictly increasing on $\left(
0,1\right) $.

(iii) We continue to show that the function $r\mapsto R_{p}\left( r\right) $
is strictly decreasing if $p\in \lbrack 2,9/4]$. Let $p_{0}=2$ and $p\in
(2,9/4]$. Then $r\mapsto R_{p_{0},p}^{\ast }\left( r^{\prime }\right) $ is
positive and strictly decreasing on $\left( 0,1\right) $. Similarly, this
together with Corollary \ref{MC-2} reveals that $r\mapsto R_{p}\left(
r\right) $ is also strictly decreasing on $\left( 0,1\right) $.

(iv) Lastly, the inequalities (\ref{E/S_p-p<7/4}) and (\ref{E/S_p-p>2})
follow from the monotonicity of the function $r\mapsto R_{p}\left( r\right) $
on $\left( 0,1\right) $.

The proof is finished.
\end{proof}

\begin{remark}
Let $p=9/8,3/2,7/4$; $2,9/4$ in Corollary \ref{MC-E/S_p-mon.}. Then by the
monotonicity of $S_{2c-p,p}$ and $\left( 1/\theta _{p}\right) S_{2c-p,p}$ in 
$p$ on $\left( 0,c\right) $ given in (P2) and (P3) we have%
\begin{eqnarray*}
He_{9/4}\left( 1,r^{\prime }\right) &<&A_{3/2}\left( 1,r^{\prime }\right)
<S_{11/4,7/4}\left( 1,r^{\prime }\right) <\frac{2}{\pi }E\left( r\right) \\
&<&\frac{22}{7\pi }S_{11/4,7/4}\left( 1,r^{\prime }\right) <\frac{2^{5/3}}{%
\pi }A_{3/2}\left( 1,r^{\prime }\right) <\frac{2\times 3^{4/9}}{\pi }%
He_{9/4}\left( 1,r^{\prime }\right) ,
\end{eqnarray*}%
\begin{equation*}
\frac{2e^{4/9}}{\pi }I_{9/4}\left( 1,r^{\prime }\right) <\frac{25}{8\pi }%
S_{5/2,2}\left( 1,r^{\prime }\right) <\frac{2}{\pi }E\left( r\right)
<S_{5/2,2}\left( 1,r^{\prime }\right) <I_{9/4}\left( 1,r^{\prime }\right) ,
\end{equation*}%
where $He_{p}\left( a,b\right) =He\left( a^{p},b^{p}\right) ^{1/p}$ and $%
I_{p}\left( a,b\right) =I\left( a^{p},b^{p}\right) ^{1/p}$ are the $p$-order
Heronian mean and identric (exponential) mean of positive numbers $a$ and $b$%
, respectively.
\end{remark}

Using expansion (\ref{E-S_p}) and Corollaries \ref{MC-1}, \ref{MC-2}
together with the property of Stolarsky means (P2), we obtain immediately

\begin{corollary}
\label{MC-E><S_p}For $p\in (-\infty ,9/4]$, the inequality%
\begin{equation}
\frac{2}{\pi }E\left( r\right) >S_{9/2-p,p}\left( 1,r^{\prime }\right)
\label{S_p,9/2,p-E}
\end{equation}%
holds for all $r\in \left( 0,1\right) $ if and only $p\in (-\infty ,7/4]$.
The inequality (\ref{S_p,9/2,p-E}) reverses for $p\in \lbrack 2,9/4]$.
\end{corollary}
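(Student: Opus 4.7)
The plan is to reduce everything to the sharp two--sided bound
$S_{11/4,7/4}(1,r') < (2/\pi) E(r) < S_{5/2,2}(1,r')$
provided by Corollaries \ref{MC-1} and \ref{MC-2}, and then to interpolate in the parameter $p$ by invoking the monotonicity property (P2) with $c = 9/4$. For the necessary direction, the power series expansion (\ref{E-S_p}) will supply what is needed near $r = 0$.

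For sufficiency when $p \in (-\infty, 7/4]$, I would apply (P2), which says that $p \mapsto S_{p,\,9/2-p}(1, r')$ is increasing on $(-\infty, 9/4]$. Since $p \leq 7/4 < 9/4$ and $r' \in (0,1)$, this yields
\[
S_{9/2-p,\,p}(1, r') \leq S_{11/4,\,7/4}(1, r'),
\]
and combining with Corollary \ref{MC-1} gives (\ref{S_p,9/2,p-E}). The reverse inequality for $p \in [2, 9/4]$ is handled symmetrically: since $2 \leq p \leq 9/4 = c$, applying (P2) in the other direction gives
\[
S_{9/2-p,\,p}(1, r') \geq S_{5/2,\,2}(1, r'),
\]
which, coupled with Corollary \ref{MC-2}, produces the reversed inequality on the whole interval $(0,1)$.

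For the necessity part, namely to show that (\ref{S_p,9/2,p-E}) cannot hold throughout $(0,1)$ once $p \in (7/4, 9/4]$, I would invoke (\ref{E-S_p}): on this range the leading coefficient $(4p-7)(4p-11)/(5 \cdot 2^{14})$ is strictly negative, since $4p - 7 > 0$ while $4p - 11 < 0$. Consequently $(2/\pi) E(r) - S_{9/2-p,\,p}(1, r') < 0$ for all sufficiently small $r > 0$, which rules out these values of $p$. There is no real obstacle here; the corollary is essentially a packaging result, and the only care needed is to track the direction of monotonicity of $p \mapsto S_{p,\,9/2-p}(1,r')$ across the pivot $c = 9/4$ and the sign of the leading coefficient of (\ref{E-S_p}) on each relevant subinterval.
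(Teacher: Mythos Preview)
Your proposal is correct and follows essentially the same route as the paper: the paper states that the corollary follows immediately from expansion (\ref{E-S_p}), Corollaries \ref{MC-1} and \ref{MC-2}, and property (P2), which is exactly what you do. One small remark on phrasing: when you handle $p\in[2,9/4]$ you are not ``applying (P2) in the other direction'' but rather using the same increasing-in-$p$ monotonicity on $(-\infty,9/4]$ at a larger value of $p$; the argument is otherwise fine.
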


\begin{remark}
Taking $p=-9/2,-9/4,0,9/8,3/2,7/4$; $2,9/4$ in Corollary \ref{MC-E><S_p}, we
get immediately%
\begin{eqnarray}
A_{9/2}^{1/3}\left( 1,r^{\prime }\right) G^{2/3}\left( 1,r^{\prime }\right)
&<&\sqrt{G\left( 1,r^{\prime }\right) He_{9/2}\left( 1,r^{\prime }\right) }%
<L_{9/2}\left( 1,r^{\prime }\right) <He_{9/4}\left( 1,r^{\prime }\right) < 
\notag \\
A_{3/2}\left( 1,r^{\prime }\right) &<&S_{11/4,7/4}\left( 1,r^{\prime
}\right) <\frac{2}{\pi }E\left( r\right) <S_{5/2,2}\left( 1,r^{\prime
}\right) <I_{9/4}\left( 1,r^{\prime }\right)  \label{E-S_p-Ic}
\end{eqnarray}%
holds for $r\in \left( 0,1\right) $, where $L_{p}\left( a,b\right) =L\left(
a^{p},b^{p}\right) ^{1/p}$ is the $p$-order logarithmic mean of positive
numbers $a$ and $b$.
\end{remark}

\begin{remark}
For $a,b>0$ with $a\neq b$, the Toader mean $\mathcal{T}(a,b)$ is defined in 
\cite{Toader-JMAA-218(2)-1998} by%
\begin{equation*}
\mathcal{T}(a,b)=\frac{2}{\pi }\int_{0}^{\pi /2}\sqrt{a^{2}\cos
^{2}t+b^{2}\sin ^{2}t}dt.
\end{equation*}%
An easy transformation yields%
\begin{equation*}
\mathcal{T}(a,b)=\left\{ 
\begin{array}{cc}
\frac{2}{\pi }aE\left( \sqrt{1-\left( b/a\right) ^{2}}\right) & \text{if }%
a>b, \\ 
\frac{2}{\pi }bE\left( \sqrt{1-\left( a/b\right) ^{2}}\right) & \text{if }%
a<b.%
\end{array}%
\right.
\end{equation*}%
Thus, all our results can be rewritten in the form of Toader mean, for
example, inequalities \ref{MI-7/4-2}, \ref{MI-2-2} and \ref{E-S_p-Ic} are
equivalent to%
\begin{equation*}
S_{11/4,7/4}\left( a,b\right) <\mathcal{T}\left( a,b\right) <\tfrac{22-7\pi 
}{4\pi }\max \left( a,b\right) +\tfrac{11\left( \pi -2\right) }{4\pi }%
S_{11/4,7/4}\left( a,b\right) <\tfrac{22}{7\pi }S_{11/4,7/4}\left(
a,b\right) ,
\end{equation*}%
\begin{equation*}
\tfrac{25}{8\pi }S_{2,5/2}\left( a,b\right) <-\tfrac{2\left( 8\pi -25\right) 
}{9\pi }\max \left( a,b\right) +\tfrac{25\left( \pi -2\right) }{9\pi }%
S_{2,5/2}\left( a,b\right) <\mathcal{T}\left( a,b\right) <S_{2,5/2}\left(
a,b\right) ,
\end{equation*}%
\begin{eqnarray*}
A_{9/2}^{1/3}\left( a,b\right) G^{2/3}\left( a,b\right) &<&\sqrt{G\left(
a,b\right) He_{9/2}\left( a,b\right) }<L_{9/2}\left( a,b\right)
<He_{9/4}\left( a,b\right) < \\
A_{3/2}\left( a,b\right) &<&S_{11/4,7/4}\left( a,b\right) <\mathcal{T}\left(
a,b\right) <S_{5/2,2}\left( a,b\right) <I_{9/4}\left( a,b\right) .
\end{eqnarray*}
\end{remark}

\section{Comparisons with some known approximations}

Let $\mathcal{A}\left( r\right) $ be the given approximation for $\left(
2/\pi \right) E\left( r\right) $ and let $\Delta \left( r\right) =\mathcal{A}%
\left( r\right) -\left( 2/\pi \right) E\left( r\right) $ denote the error.
In general, the most important criterion to measure of accuracy of the given
approximation $\mathcal{A}\left( r\right) $ for $\left( 2/\pi \right)
E\left( r\right) $ should be the maximum absolute error $\max_{r\in \left(
0,1\right) }|\Delta \left( r\right) |$. Due to $r\in \left( 0,1\right) $,
however, similar to Barnard et al.'s opinion in \cite{Barnard-JMA-32-2000},
if $\Delta \left( r\right) =\varepsilon _{n_{0}}r^{2n_{0}}+O\left(
r^{2n_{0}+2}\right) $ with $\varepsilon _{n_{0}}\neq 0$ by expanding in
Maclaurin series,\ then the leading item $\delta _{0}:=\varepsilon
_{n_{0}}r^{2n_{0}}$ can be viewed as a measure of accuracy of the given
approximation $\mathcal{A}\left( r\right) $ for $\left( 2/\pi \right)
E\left( r\right) $. For this reason, \emph{we call }$\mathcal{A}\left(
r\right) $\emph{\ an }$n_{0}$\emph{-order approximation for }$\left( 2/\pi
\right) E\left( r\right) $\emph{. }And, $\mathcal{A}\left( r\right) $\emph{\
is called an }$n_{0}$\emph{-order lower (upper) approximation for }$\left(
2/\pi \right) E\left( r\right) $ if $\Delta \left( r\right) \leq \left( \geq
\right) 0$ for all $r\in \left( 0,1\right) $. In most cases, the greater the
order $n_{0}$ is, the higher the accuracy of approximation $\mathcal{A}%
\left( r\right) $ for $\left( 2/\pi \right) E\left( r\right) $ is.

Of course, a desirable approximation $\mathcal{A}\left( r\right) $ also has
the simplicity of the expression. Unfortunately, there exists frequently,
certain negative correlation between the accuracy and simplicity.

Now we choose those approximations which have higher accuracy mentioned in
Introduction to compare with our ones.

\subsection{For some lower approximations}

In Tables 1, the values in the third column are derived by expanding in
power series, while $\max_{r\in \left( 0,1\right) }|\Delta _{i}\left(
r\right) |$ for $i=1$, $2$, $5$ in the fourth column are from \cite[Theorem
1.1]{Barnard-JMA-31-2000}, \cite[Theorem 2.5]{Wang-JAT-164-2012}, (\ref%
{Bi-E-S_7/4}), respectively.%
\begin{equation*}
\begin{array}{c}
\text{Table 1: \ The lower approximations} \\ 
\begin{tabular}{|c|c|c|c|}
\hline
$\mathcal{A}_{i}\left( r^{\prime }\right) $ & Expressions & $\delta
_{0}^{\left( i\right) }:=\varepsilon _{n_{0}}^{\left( i\right) }r^{2n_{0}}$
& $\max_{r\in \left( 0,1\right) }|\Delta _{i}\left( r\right) |$ \\ \hline
\multicolumn{1}{|l|}{$\mathcal{A}_{1}\left( r^{\prime }\right) $} & 
\multicolumn{1}{|l|}{$A_{3/2}\left( 1,r^{\prime }\right) $} & 
\multicolumn{1}{|l|}{$\frac{1}{2^{14}}r^{8}$} & \multicolumn{1}{|l|}{$=\frac{%
2}{\pi }-2^{-2/3}\approx 0.0066592$} \\ \hline
\multicolumn{1}{|l|}{$\mathcal{A}_{2}\left( r^{\prime }\right) $} & 
\multicolumn{1}{|l|}{$\frac{23A\left( 1,r^{\prime }\right) -5H\left(
1,r^{\prime }\right) -2S\left( 1,r^{\prime }\right) }{16}$} & 
\multicolumn{1}{|l|}{$\frac{3}{2^{20}}r^{12}$} & \multicolumn{1}{|l|}{$=%
\frac{2}{\pi }-\frac{23-2\sqrt{2}}{32}\approx 0.0062581$} \\ \hline
\multicolumn{1}{|l|}{$\mathcal{A}_{3}\left( r^{\prime }\right) $} & 
\multicolumn{1}{|l|}{$\frac{1}{128}\frac{\left( 9r^{\prime
}{}^{2}+14r^{\prime }+9\right) ^{2}}{\left( r^{\prime }+1\right) ^{3}}$} & 
\multicolumn{1}{|l|}{$\frac{1}{2^{20}}r^{12}$} & \multicolumn{1}{|l|}{$\geq 
\frac{2}{\pi }-\frac{81}{128}\approx 0.0038073$} \\ \hline
\multicolumn{1}{|l|}{$\mathcal{A}_{4}\left( r^{\prime }\right) $} & 
\multicolumn{1}{|l|}{$\frac{1+r^{\prime }+r^{\prime 2}}{2\left( 1+r^{\prime
}\right) }+\frac{1+r^{\prime }}{8}$} & \multicolumn{1}{|l|}{$\frac{263}{%
2^{16}}r^{8}$} & \multicolumn{1}{|l|}{$\geq \frac{2}{\pi }-\frac{5}{8}%
\approx 0.011620$} \\ \hline
\multicolumn{1}{|l|}{$\mathcal{A}_{5}\left( r^{\prime }\right) $} & 
\multicolumn{1}{|l|}{$S_{11/4,7/4}\left( 1,r^{\prime }\right) $} & 
\multicolumn{1}{|l|}{$\frac{1}{7\times 2^{21}}r^{12}$} & 
\multicolumn{1}{|l|}{$=\frac{2}{\pi }-\frac{7}{11}\approx 0.00025614$} \\ 
\hline
\end{tabular}%
\end{array}%
\end{equation*}%
Moreover, we can prove

\begin{lemma}
\label{A-53214}Let $\mathcal{A}_{i}\left( x\right) $ ($i=1,2,3,4,5$) be
defined on $\left( 0,\infty \right) $ by (\ref{E>A_3/2}), (\ref{Wang-1}), (%
\ref{Wang-2}), (\ref{HQ}) and (\ref{S_11/4,7/4}), respectively (see also
Table 1). Then the inequalities%
\begin{equation}
\mathcal{A}_{5}\left( x\right) >\mathcal{A}_{3}\left( x\right) >\mathcal{A}%
_{2}\left( x\right) >\mathcal{A}_{1}\left( x\right) >\mathcal{A}_{4}\left(
x\right)  \label{A_5>4>2>1>4}
\end{equation}%
hold for $x>0$ with $x\neq 1$.
\end{lemma}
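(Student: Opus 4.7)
The plan is to verify each of the four chained inequalities $\mathcal{A}_{5}>\mathcal{A}_{3}$, $\mathcal{A}_{3}>\mathcal{A}_{2}$, $\mathcal{A}_{2}>\mathcal{A}_{1}$, $\mathcal{A}_{1}>\mathcal{A}_{4}$ separately. Each $\mathcal{A}_{i}(x)$ equals $1$ at $x=1$, so every difference vanishes there, and the common strategy is: introduce a substitution that clears all fractional powers, reduce the inequality to a polynomial or rational inequality in the new variable $t>0$ (with $t\neq 1$), factor out the maximal $(t-1)^{2m}$ from the difference, and verify that the remaining cofactor is positive (typically by exhibiting nonnegative coefficients after expanding in $t$ or in $t-1$).

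For $\mathcal{A}_{5}>\mathcal{A}_{3}$, I substitute $x=t^{4}$. Using the geometric-series identity, $\mathcal{A}_{5}(t^{4})=(7/11)\cdot(1+t+\cdots+t^{10})/(1+t+\cdots+t^{6})$ becomes rational, and $\mathcal{A}_{3}(t^{4})$ is already rational. Cross-multiplying produces the polynomial inequality
\[
896\,(t^{4}+1)^{3}\sum_{k=0}^{10}t^{k}-11(9t^{8}+14t^{4}+9)^{2}\sum_{k=0}^{6}t^{k}>0,
\]
where both sides have degree $22$. For $\mathcal{A}_{3}>\mathcal{A}_{2}$, I isolate the sole surd $(1/8)\sqrt{(1+x^{2})/2}$, check that the remaining rational expression in $x$ is positive, then square and multiply by $(x+1)^{6}$ to obtain a polynomial inequality in $x$. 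For $\mathcal{A}_{1}>\mathcal{A}_{4}$, I substitute $x=t^{2}$, cube both sides (both sides being positive), and clear $(1+t^{2})^{3}$ to obtain a polynomial inequality in $t$. The comparison $\mathcal{A}_{2}>\mathcal{A}_{1}$ is already explicit in \cite{Wang-JAT-164-2012}, where $\mathcal{A}_{2}$ was introduced precisely as an improvement over $\mathcal{A}_{1}$, so it suffices to cite that result; alternatively one substitutes $x=t^{2}$, separates the two surds $\sqrt{(1+t^{4})/2}$ and $((1+t^{3})/2)^{2/3}$ onto opposite sides, and successively squares and cubes.

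In each of the four reductions we are left with a polynomial inequality $P(t)>0$ for $t>0$, $t\neq 1$, with $P(1)=0$ of some even order $2m$ (the order being dictated by the leading nonvanishing term in the power-series expansion of $\mathcal{A}_{i}-\mathcal{A}_{j}$ at $x=1$, which is easy to read off). Writing $P(t)=(t-1)^{2m}Q(t)$, the task is to show $Q(t)>0$ for $t>0$. The cleanest route is to expand $Q$ about $t=1$ (that is, as a polynomial in $s=t-1$) and check nonnegativity of every coefficient; if some coefficient is negative, one can split the range of $t$ at $1$ and expand about $0$ (for $0<t<1$) and about $\infty$ (for $t>1$) to recover positivity on each subinterval.

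The principal difficulty will be the first comparison $\mathcal{A}_{5}>\mathcal{A}_{3}$: the polynomial $P(t)$ has degree $22$, the factorization $(t-1)^{2m}Q(t)$ leaves a cofactor of substantial degree, and verifying positivity of its coefficients, while mechanical, requires computer algebra. The other three reductions lead to polynomial inequalities of much smaller degree and can reasonably be checked by hand.
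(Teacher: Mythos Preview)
Your proposal is correct and follows essentially the same approach as the paper. The paper likewise treats the four inequalities one at a time, using the very substitutions you describe (replacing $x$ by $x^{4}$ for $\mathcal{A}_{5}>\mathcal{A}_{3}$, isolating the surd and squaring for $\mathcal{A}_{3}>\mathcal{A}_{2}$, citing \cite[Lemma 3.2]{Wang-JAT-164-2012} for $\mathcal{A}_{2}>\mathcal{A}_{1}$, and replacing $x$ by $x^{2}$ then cubing for $\mathcal{A}_{1}>\mathcal{A}_{4}$), and in each case factors the resulting expression as a power of $(x-1)$ times an explicit polynomial whose coefficients in $x$ are all visibly positive---so the expansion about $t=1$ you anticipate is not even needed.
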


\begin{proof}
(i) The first inequality in (\ref{A_5>4>2>1>4}) is equivalent to 
\begin{equation*}
\mathcal{A}_{5}\left( x^{4}\right) -\mathcal{A}_{3}\left( x^{4}\right) =%
\frac{7\left( x^{11}-1\right) }{11\left( x^{7}-1\right) }-\frac{\left(
9x^{8}+14x^{4}+9\right) ^{2}}{128\left( x^{4}+1\right) ^{3}}>0
\end{equation*}%
Factoring yields%
\begin{equation*}
\mathcal{A}_{5}\left( x^{4}\right) -\mathcal{A}_{3}\left( x^{4}\right) =%
\frac{1}{1408}\frac{\left( x-1\right) ^{5}}{\left( x^{7}-1\right) \left(
x^{4}+1\right) ^{3}}h_{1}\left( x\right) >0,
\end{equation*}%
where%
\begin{eqnarray*}
h_{1}\left( x\right)
&=&5x^{16}+35x^{15}+140x^{14}+420x^{13}+966x^{12}+1722x^{11}+2268x^{10}+2415x^{9}
\\
&&+2362x^{8}+2415x^{7}+2268x^{6}+1722x^{5}+966x^{4}+420x^{3}+140x^{2}+35x+5.
\end{eqnarray*}

(ii) A direct computation leads to the second inequality in (\ref%
{A_5>4>2>1>4}) is equivalent to%
\begin{eqnarray*}
\mathcal{A}_{3}\left( x\right) -\mathcal{A}_{2}\left( x\right) &=&\frac{1}{%
128}\frac{\left( 9x^{2}+14x+9\right) ^{2}}{\left( x+1\right) ^{3}}-\frac{23%
\frac{1+x}{2}-5\frac{2x}{1+x}-2\sqrt{\frac{1+x^{2}}{2}}}{16} \\
&=&\frac{\sqrt{2\left( 1+x^{2}\right) }}{16}-\frac{1}{128}\frac{%
11x^{4}+36x^{3}+34x^{2}+36x+11}{\left( x+1\right) ^{3}}>0,
\end{eqnarray*}%
where the inequality holds due to%
\begin{eqnarray*}
&&\left( \frac{\sqrt{2\left( 1+x^{2}\right) }}{16}\right) ^{2}-\left( \frac{1%
}{128}\frac{36x+34x^{2}+36x^{3}+11x^{4}+11}{\left( x+1\right) ^{3}}\right)
^{2} \\
&=&\frac{1}{2^{14}}\frac{\left( x-1\right) ^{6}\left( 7x^{2}+18x+7\right) }{%
\left( x+1\right) ^{6}}>0.
\end{eqnarray*}

(iii) It has been shown in \cite[Lemma 3.2]{Wang-JAT-164-2012} that the
third inequality in (\ref{A_5>4>2>1>4}) holds for $x>0$ with $x\neq 1$.

(iv) The last inequality in (\ref{A_5>4>2>1>4}) is equivalent to%
\begin{eqnarray*}
\mathcal{A}_{1}\left( x^{2}\right) ^{3}-\mathcal{A}_{4}\left( x^{2}\right)
^{3} &=&\left( \frac{1+x^{3}}{2}\right) ^{2}-\left( \frac{1+x^{2}+x^{4}}{%
2\left( 1+x^{2}\right) }+\frac{1+x^{2}}{8}\right) ^{3} \\
&=&\frac{\left( x-1\right) ^{6}}{512}\frac{\left(
3x^{6}+18x^{5}-3x^{4}+28x^{3}-3x^{2}+18x+3\right) }{\left( x^{2}+1\right)
^{3}}>0.
\end{eqnarray*}%
This completes the proof.
\end{proof}

\begin{remark}
In Table 1, both $\mathcal{A}_{1}\left( r^{\prime }\right) $ and $\mathcal{A}%
_{4}\left( r^{\prime }\right) $ are $4$-order lower approximations. And for
the accuracy, since $\varepsilon _{4}^{\left( 1\right) }<\varepsilon
_{4}^{\left( 4\right) }$ and $\max_{r\in \left( 0,1\right) }|\Delta
_{1}\left( r\right) |<\max_{r\in \left( 0,1\right) }|\Delta _{4}\left(
r\right) |$, so the former is better than the latter. And, the last
inequality in (\ref{A_5>4>2>1>4}) also proves this assertion.

While $\mathcal{A}_{2}\left( r^{\prime }\right) $, $\mathcal{A}_{3}\left(
r^{\prime }\right) $ and $\mathcal{A}_{5}\left( r^{\prime }\right) $($%
=S_{11/4,7/4}\left( 1,r^{\prime }\right) $) are $6$-order lower
approximations. In view of $\varepsilon _{6}^{\left( 5\right) }<\varepsilon
_{6}^{\left( 3\right) }<\varepsilon _{6}^{\left( 2\right) }$ and%
\begin{equation*}
\max_{r\in \left( 0,1\right) }|\Delta _{5}\left( r\right) |\ll \min \left(
\max_{r\in \left( 0,1\right) }|\Delta _{2}\left( r\right) |,\max_{r\in
\left( 0,1\right) }|\Delta _{3}\left( r\right) |\right) ,
\end{equation*}%
we claim that the accuracy of $\mathcal{A}_{5}\left( r^{\prime }\right) $ ($%
=S_{11/4,7/4}\left( 1,r^{\prime }\right) $) is far superior to $\mathcal{A}%
_{2}\left( r^{\prime }\right) $ and $\mathcal{A}_{3}\left( r^{\prime
}\right) $. And, the first and second inequalities in (\ref{A_5>4>2>1>4})
confirm similarly prove this claim.

To sum up, our lower approximation $\mathcal{A}_{5}\left( r^{\prime }\right) 
$($=S_{11/4,7/4}\left( 1,r^{\prime }\right) $) for $\left( 2/\pi \right)
E\left( r\right) $ is the best of all five ones listed in Table 1.
\end{remark}

\subsection{For some upper approximations}

In Tables 2, the values in the third column are derived by expanding in
power series, while $\max_{r\in \left( 0,1\right) }|\Delta _{i}\left(
r\right) |$ for $i=7$, $8$ in the fourth column are from \cite[Theorem 3.2]%
{Chu-CMA-63-2012}, (\ref{Bi-E-S_2}), respectively.

\begin{equation*}
\begin{array}{c}
\text{Table 2: \ The upper approximations} \\ 
\begin{tabular}{|c|c|c|c|}
\hline
$\mathcal{A}_{i}\left( r^{\prime }\right) $ & Expressions & $\delta
_{0}^{\left( i\right) }:=\varepsilon _{n_{0}}^{\left( i\right) }r^{2n_{0}}$
& $\max_{r\in \left( 0,1\right) }|\Delta \left( r\right) |$ \\ \hline
\multicolumn{1}{|l|}{$\mathcal{A}_{6}\left( r^{\prime }\right) $} & 
\multicolumn{1}{|l|}{$\mathcal{L}_{1/4}\left( 1,r^{\prime }\right) $} & 
\multicolumn{1}{|l|}{$-\frac{1}{2^{12}}r^{8}$} & \multicolumn{1}{|l|}{$\geq
1-\frac{2}{\pi }\approx 0.36338$} \\ \hline
\multicolumn{1}{|l|}{$\mathcal{A}_{7}\left( r^{\prime }\right) $} & 
\multicolumn{1}{|l|}{$\frac{18A\left( 1,r^{\prime }\right) -5G\left(
1,r^{\prime }\right) +3S\left( 1,r^{\prime }\right) }{16}$} & 
\multicolumn{1}{|l|}{$-\frac{7}{2^{20}}r^{12}$} & \multicolumn{1}{|l|}{$=%
\frac{18+3\sqrt{2}}{32}-\frac{2}{\pi }\approx 0.058463$} \\ \hline
\multicolumn{1}{|l|}{$\mathcal{A}_{8}\left( r^{\prime }\right) $} & 
\multicolumn{1}{|l|}{$S_{5/2,2}\left( 1,r^{\prime }\right) $} & 
\multicolumn{1}{|l|}{$-\frac{3}{5\times 2^{14}}r^{8}$} & 
\multicolumn{1}{|l|}{$=\frac{16}{25}-\frac{2}{\pi }\approx 0.0033802$} \\ 
\hline
\end{tabular}%
\end{array}%
\end{equation*}

Further we have

\begin{lemma}
\label{A-786}Let $\mathcal{A}_{i}\left( x\right) $ ($i=6,7,8$) be defined on 
$\left( 0,\infty \right) $ by (\ref{E<L_1/4}), (\ref{Ch-2}), (\ref{S_5/2,2}%
), respectively (see also Table 2). Then (i) the inequality%
\begin{equation}
\max \left( \mathcal{A}_{7}\left( x\right) ,\mathcal{A}_{8}\left( x\right)
\right) <\mathcal{A}_{6}\left( x\right)  \label{A_7,8<6}
\end{equation}%
hold for $x>0$ with $x\neq 1$; (ii) there is a $x_{0}\in \left( 0,1\right) $
such that $\mathcal{A}_{8}\left( x\right) >\mathcal{A}_{7}\left( x\right) $
for $x\in \left( 0,x_{0}\right) $ and $\mathcal{A}_{8}\left( x\right) <%
\mathcal{A}_{7}\left( x\right) $ for $x\in \left( x_{0},1\right) $.
\end{lemma}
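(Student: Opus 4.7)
The proof of part (i) splits into the two sub-claims $\mathcal{A}_6 > \mathcal{A}_7$ and $\mathcal{A}_6 > \mathcal{A}_8$, each of which I plan to reduce to polynomial positivity via the substitution $x = t^4$ (clearing the quarter power in $\mathcal{L}_{1/4}$). Under this change $\mathcal{A}_6(t^4) = (1+t^5)/(1+t)$ and $\mathcal{A}_8(t^4) = \bigl[\tfrac{4(1+t^2+t^4+t^6+t^8)}{5(1+t^2)(1+t^4)}\bigr]^2$. For $\mathcal{A}_6 > \mathcal{A}_8$, the difference becomes rational in $t$ with numerator vanishing at $t = 1$; I expect that factoring out the maximal power $(t-1)^{2k}$ leaves a polynomial with nonnegative coefficients in $t$, giving strict positivity for $t > 0$, $t \neq 1$, in the spirit of the computations in the proof of Lemma \ref{A-53214}. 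For $\mathcal{A}_6 > \mathcal{A}_7$, the root-square mean introduces the irrational term $\sqrt{(1+t^8)/2}$; the plan is to rearrange as $(1+t^5)/(1+t) - 9(1+t^4)/16 + 5t^2/16 > (3/16)\sqrt{(1+t^8)/2}$, verify that the left-hand side is already positive (by the same $(t-1)^{2k}$-factor technique), then square both sides to obtain a purely polynomial inequality, which I again reduce by extracting the zero at $t = 1$ and inspecting coefficients.

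For part (ii) I would study $h(x) := \mathcal{A}_8(x) - \mathcal{A}_7(x)$ on $(0,1)$. Evaluation at the left endpoint gives the explicit constant $h(0) = 16/25 - (18+3\sqrt 2)/32$, whose sign is definite. The behaviour as $x \to 1^-$ is governed by the Maclaurin expansions relative to $(2/\pi) E(r)$ with $r = \sqrt{1-x^2}$: Table 2 shows $\mathcal{A}_7 - (2/\pi)E = O(r^{12})$ while $\mathcal{A}_8 - (2/\pi)E$ has leading term of order $r^8$, so $h$ has a definite sign, opposite to that of $h(0)$, on a one-sided neighbourhood of $x = 1$. Continuity then produces at least one zero $x_0 \in (0,1)$ by the intermediate value theorem; the remaining content of the lemma is that this zero is unique.

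For the uniqueness, my plan is to apply the substitution $x = t^4$ to the equation $\mathcal{A}_8(x) = \mathcal{A}_7(x)$, isolate the square-root term from the root-square mean, and square to obtain a polynomial equation in $t$. I would then display this polynomial as $(t-1)^{2k} Q(t)$ and show --- via Descartes' rule of signs applied to $Q(1-s)$, or by an explicit sign count of coefficients --- that $Q$ has precisely one zero in $(0,1)$; any spurious root introduced by squaring is discarded by checking the signs of the two sides of the unsquared equation at candidates. The main obstacle throughout is the interaction between the fractional powers in $\mathcal{A}_6$ and the square root in $\mathcal{A}_7$, which forces us to clear both before any polynomial-positivity argument becomes available, with a correspondingly heavy amount of symbolic computation; the uniqueness step in part (ii) is the most delicate, since it requires controlling the polynomial $Q(t)$ globally on $(0,1)$ rather than merely at the endpoints.
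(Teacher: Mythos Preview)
Your plan for part (i) is essentially what the paper does for $\mathcal{A}_8<\mathcal{A}_6$: substitute $x\mapsto x^4$, factor out the appropriate power of $(x-1)$, and observe that the cofactor has only nonnegative coefficients. For $\mathcal{A}_7<\mathcal{A}_6$ the paper simply cites \cite[Lemma 5.1]{Chu-CMA-63-2012}, so your direct isolate--and--square argument is an alternative route; it should go through, but be aware that the left-hand side you need to show positive before squaring is itself a rational function minus a polynomial, so one more $(t-1)^{2k}$-factorisation is required there.

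For part (ii), your overall architecture (endpoint signs, intermediate-value existence, then squaring for uniqueness) is the same as the paper's, but you are missing the device that actually makes the uniqueness step work. Two remarks. First, since $\mathcal{A}_6$ no longer appears, the substitution $x\mapsto x^2$ already rationalises $G(1,x)$ and $x^{5/2}$; your $x=t^4$ is harmless but doubles every degree for no gain. Second, and this is the real point: after writing $\mathcal{A}_8(x^2)-\mathcal{A}_7(x^2)=\bigl(h_2(x)-h_3(x)\bigr)/400$ with $h_3(x)=75\sqrt{(1+x^4)/2}$ and passing to $h_2^2-h_3^2$, the paper exploits the symmetry of the means under $x\leftrightarrow 1/x$ via the change of variable $u=x+1/x$. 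This collapses the resulting polynomial to degree $6$ in $u$, and after the further shift $v=u-2\ge 0$ one obtains a polynomial with exactly one sign change in its coefficients, so Lemma~\ref{L-P-zp} (a clean instance of Descartes' rule) gives the unique positive root immediately.

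Without the $u=x+1/x$ reduction your squared equation, once denominators are cleared and the root at $t=1$ is removed, will have degree in the twenties (or the forties under $t^4$), and there is no a priori reason that $Q(1-s)$ exhibits a single sign change; Descartes' rule then only bounds the number of roots by an unspecified odd number, and you would have to fall back on ad hoc interval estimates. So your programme is not wrong, but the step you yourself flag as ``the most delicate'' is not yet under control; the symmetry substitution is precisely what turns it into a one-line application of Lemma~\ref{L-P-zp}.
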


\begin{proof}
(i) It has been proven in \cite[Lemma 5.1]{Chu-CMA-63-2012} that $\mathcal{A}%
_{7}\left( x\right) <\mathcal{A}_{6}\left( x\right) $. To prove $\mathcal{A}%
_{8}\left( x\right) <\mathcal{A}_{6}\left( x\right) $, it suffices to prove
that $\mathcal{A}_{8}\left( x^{4}\right) -\mathcal{A}_{6}\left( x^{4}\right)
<0$. Straightforward calculation gives%
\begin{eqnarray*}
\mathcal{A}_{8}\left( x^{4}\right) -\mathcal{A}_{6}\left( x^{4}\right) &=&%
\frac{16\left( x^{10}-1\right) ^{2}}{25\left( x^{8}-1\right) ^{2}}-\frac{%
x^{5}+1}{x+1}=-\frac{\left( x-1\right) ^{6}\left( x+1\right) \left(
x^{5}+1\right) }{25\left( x^{8}-1\right) ^{2}} \\
&&\times \left(
9x^{8}+20x^{7}+44x^{6}+60x^{5}+74x^{4}+60x^{3}+44x^{2}+20x+9\right) ,
\end{eqnarray*}%
which is obviously negative for $x>0$ with $x\neq 1$.

(ii) We have%
\begin{eqnarray}
\mathcal{A}_{8}\left( x^{2}\right) -\mathcal{A}_{7}\left( x^{2}\right) &=&%
\frac{16\left( x^{5}-1\right) ^{2}}{25\left( x^{4}-1\right) ^{2}}-\left( 
\frac{9}{8}\frac{1+x^{2}}{2}-\frac{5}{16}x+\frac{3}{16}\sqrt{\frac{1+x^{4}}{2%
}}\right)  \notag \\
&:&=\frac{h_{2}\left( x\right) -h_{3}\left( x\right) }{400}=\frac{1}{400}%
\frac{h_{2}^{2}\left( x\right) -h_{3}^{2}\left( x\right) }{h_{2}\left(
x\right) +h_{3}\left( x\right) },  \label{S2-Ch}
\end{eqnarray}%
where%
\begin{eqnarray*}
h_{2}\left( x\right) &=&\frac{%
31x^{8}+187x^{7}+118x^{6}+49x^{5}+430x^{4}+49x^{3}+118x^{2}+187x+31}{\left(
x^{2}+1\right) ^{2}\left( x+1\right) ^{2}}, \\
h_{3}\left( x\right) &=&75\sqrt{\frac{1+x^{4}}{2}}.
\end{eqnarray*}

Making a change of variable $u=x+1/x$ yields%
\begin{equation*}
h_{2}\left( x\right) =x\frac{31u^{4}+187u^{3}-6u^{2}-512u+256}{u^{2}\left(
u+2\right) }\text{ \ and \ }h_{3}\left( x\right) =75x\sqrt{\frac{u^{2}-2}{2}}%
,
\end{equation*}%
and therefore,%
\begin{equation}
h_{2}^{2}\left( x\right) -h_{3}^{2}\left( x\right) =-\frac{x^{2}}{2}\frac{%
\left( u-2\right) ^{2}}{u^{4}\left( u+2\right) ^{2}}h_{4}\left( u\right) ,
\label{h2-h3}
\end{equation}%
where%
\begin{equation*}
h_{4}\left( u\right)
=3703u^{6}+14\,124u^{5}-16\,260u^{4}-98\,560u^{3}-23\,040u^{2}+98\,304u-32%
\,768.
\end{equation*}%
Since $u=x+1/x\geq 2$, replacing $u$ by $v+2$ leads us to%
\begin{equation*}
h_{4}\left( v+2\right)
=3703v^{6}+58\,560v^{5}+347\,160v^{4}+928\,800v^{3}+1014\,000v^{2}+144%
\,000v-288\,000,
\end{equation*}%
where $v>0$.

It follows from Lemma \ref{L-P-zp} that the polynomial $h_{4}\left(
v+2\right) $ has a unique zero point $v_{1}\in \left( 0,\infty \right) $
such that $h_{4}\left( v+2\right) <0$ for $v\in \left( 0,v_{1}\right) $ and $%
h_{4}\left( v+2\right) >0$ for $v\in \left( v_{1},\infty \right) $. Numeric
computation gives $v_{1}\in \left( 0.399\,475162,0.399\,475163\right) $.
This together with (\ref{h2-h3}) and (\ref{S2-Ch}) reveals that%
\begin{equation*}
\mathcal{A}_{8}\left( x^{2}\right) -\mathcal{A}_{7}\left( x^{2}\right)
\left\{ 
\begin{array}{cc}
>0 & \text{for }x\in \left( 0,x_{1}\right) , \\ 
<0 & \text{for }x\in \left( x_{1},1\right) ,%
\end{array}%
\right.
\end{equation*}%
where $x_{1}+1/x_{1}=2+v_{1}$, that is, $x_{1}=\left( v_{1}+2-\sqrt{%
v_{1}\left( v_{1}+4\right) }\right) /2\approx 0.53689$, which proves the
second assertion, where $x_{0}=x_{1}^{2}\approx 0.28825$.

This completes the proof.
\end{proof}

\begin{remark}
From Table 2, as upper approximations, $\mathcal{A}_{6}\left( r^{\prime
}\right) $($=\mathcal{L}_{1/4}\left( 1,r^{\prime }\right) $) and $\mathcal{A}%
_{8}\left( r^{\prime }\right) $($=S_{5/2,2}\left( 1,r^{\prime }\right) $)
have $8$-order accuracy, but the facts $|\varepsilon _{4}^{\left( 8\right)
}|<|\varepsilon _{4}^{\left( 6\right) }|$ and $\max_{r\in \left( 0,1\right)
}|\Delta _{8}\left( r\right) |<\max_{r\in \left( 0,1\right) }|\Delta
_{6}\left( r\right) |$ together with the inequality (\ref{A_7,8<6}) show
that the accuracy of $\mathcal{A}_{8}\left( r^{\prime }\right) $ is higher
than $\mathcal{A}_{6}\left( r^{\prime }\right) $.

The upper approximation $\mathcal{A}_{7}\left( r^{\prime }\right) $ has $6$%
-order accuracy, but its maximum absolute error is greater than our
approximation $\mathcal{A}_{8}\left( r^{\prime }\right) $'s. By the second
assertion of Lemma \ref{A-786}, it is seen that there is a unique $r_{0}\in
\left( 0,1\right) $ such that $\left( 2/\pi \right) E\left( r\right) <%
\mathcal{A}_{7}\left( r^{\prime }\right) <\mathcal{A}_{8}\left( r^{\prime
}\right) $ for $r\in \left( 0,r_{0}\right) $ and $\left( 2/\pi \right)
E\left( r\right) <\mathcal{A}_{8}\left( r^{\prime }\right) <\mathcal{A}%
_{7}\left( r^{\prime }\right) $ for $r\in \left( r_{0},1\right) $, where $%
r_{0}=\sqrt{1-x_{0}^{2}}\approx 0.95756$.
\end{remark}

\begin{remark}
Not only that but our approximation $S_{11/4,7/4}\left( 1,r^{\prime }\right) 
$ and $S_{5/2,2}\left( 1,r^{\prime }\right) $ have very small absolute
relative errors. Exactly, from (\ref{MI-7/4-2}) and (\ref{MI-2-2}) it is
derived that%
\begin{eqnarray*}
\left\vert \frac{\left( 2/\pi \right) E\left( r\right) -S_{11/4,7/4}\left(
1,r^{\prime }\right) }{\left( 2/\pi \right) E\left( r\right) }\right\vert
&<&1-\frac{7\pi }{22}\approx 0.00040234, \\
\left\vert \frac{\left( 2/\pi \right) E\left( r\right) -S_{5/2,2}\left(
1,r^{\prime }\right) }{\left( 2/\pi \right) E\left( r\right) }\right\vert &<&%
\frac{8\pi }{25}-1\approx 0.0053096.
\end{eqnarray*}%
Furthermore, by Corollary \ref{MC-E/S_p-mon.}, the absolute relative error
function%
\begin{equation*}
\mathcal{E}_{p}\left( r\right) =\left\vert \frac{\left( 2/\pi \right)
E\left( r\right) -S_{9/2-p,p}\left( 1,r^{\prime }\right) }{\left( 2/\pi
\right) E\left( r\right) }\right\vert
\end{equation*}%
is strictly increasing in $r$ from $\left( 0,1\right) $ onto $\left(
0,\left\vert 1-\pi \theta _{p}/2\right\vert \right) $ for $p\in (0,7/4]\cup %
\left[ 5/2,9/4\right] $. Also, by the Remark \ref{P3-C}, the maximum
absolute relative error $\max_{r\in \left( 0,1\right) }\mathcal{E}_{p}\left(
r\right) =\left\vert 1-\pi \theta _{p}/2\right\vert $ is strictly decreasing
in $p$ on $(0,7/4]$ and strictly increasing on $[2,9/4]$.
\end{remark}

Lastly, we close this paper by proposing a conjecture as follows.

\begin{conjecture}
Let $p_{0}\approx 1.763135$ denote the unique solution of the equation $%
\theta _{p}=2/\pi $, that is,%
\begin{equation*}
\left( \frac{9/2-p}{p}\right) ^{1/\left( 2p-9/2\right) }=\frac{2}{\pi }
\end{equation*}%
on $(0,9/4]$. Then there is a $r_{0}\in \left( 0,1\right) $ such that the
function%
\begin{equation*}
H\left( r\right) =\frac{1-\left( 2/\pi \right) E\left( r\right) }{%
1-S_{9/2-p_{0},p_{0}}\left( 1,r^{\prime }\right) }
\end{equation*}%
is strictly increasing on $\left( 0,r_{0}\right) $ and strictly decreasing
on $\left( r_{0},1\right) $. Consequently, it holds that%
\begin{equation*}
\frac{2}{\pi }E\left( r\right) <S_{9/2-p_{0},p_{0}}\left( 1,r^{\prime
}\right)
\end{equation*}%
for $r\in \left( 0,1\right) $ with the best constant $p_{0}$.
\end{conjecture}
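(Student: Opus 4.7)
The plan is to set $f_1(r)=1-(2/\pi)E(r)$ and $f_2(r)=1-S_{9/2-p_0,p_0}(1,r^{\prime})$, so that $H(r)=f_1(r)/f_2(r)$, and then establish three things: (i)~$H(0^+)=H(1^-)=1$; (ii)~$H$ has a unique interior critical point, which is a strict maximum; (iii)~$p_0$ is the best constant. Once (i) and (ii) are in hand, $H\ge 1$ on $(0,1)$ with equality only in the limits, giving the strict inequality $(2/\pi)E(r)<S_{9/2-p_0,p_0}(1,r^{\prime})$.

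For (i), near $r=0$ the normalization $p_0+(9/2-p_0)=9/2$ together with (\ref{E-S_p}) gives $f_1(r)-f_2(r)=-\frac{(4p_0-7)(4p_0-11)}{5\cdot 2^{14}}r^8+O(r^{10})$, with both $f_1(r)$ and $f_2(r)$ equal to $\frac{1}{4}r^2+\frac{3}{64}r^4+O(r^6)$; since $7/4<p_0<11/4$ the leading coefficient of $f_1-f_2$ is positive, so $H(0^+)=1$ and $H(r)>1$ just to the right of $0$. Near $r=1$, the defining equation $\theta_{p_0}=2/\pi$ combined with $S_{9/2-p_0,p_0}(1,0)=\theta_{p_0}$ yields $f_1(1^-)=f_2(1^-)=1-2/\pi$, so $H(1^-)=1$; a direct expansion of the Stolarsky mean as $r^{\prime}\to 0$ gives $S_{9/2-p_0,p_0}(1,r^{\prime})=\theta_{p_0}+\frac{\theta_{p_0}}{9/2-2p_0}r^{\prime p_0}+o(r^{\prime p_0})$, while $(2/\pi)E(r)-2/\pi=O(r^{\prime 2}\log(1/r^{\prime}))$, and because $p_0<2$ the $r^{\prime p_0}$ correction dominates with a sign that again forces $H(r)>1$ for $r$ close to $1$. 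Statement (iii) is then immediate: by Remark~\ref{P3-C}, $\theta_p$ is strictly increasing on $(0,9/4)$, so $p<p_0$ gives $\theta_p<2/\pi$ and hence $\lim_{r\to 1^-}[S_{9/2-p,p}(1,r^{\prime})-(2/\pi)E(r)]=\theta_p-2/\pi<0$, so no constant smaller than $p_0$ can sustain the upper bound.

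The real work is (ii). Differentiating $H$ with the formulas in (\ref{dK-dE}) together with the logarithmic derivative of $S_{9/2-p_0,p_0}(1,r^{\prime})$ reduces the sign of $H'(r)$ to that of a single auxiliary function $\Phi(r)$ built from $E$, $K$, and rational-power combinations of $r^{\prime}$. One then aims to prove that $\Phi$ has exactly one zero $r_0$ in $(0,1)$, with $\Phi>0$ on $(0,r_0)$ and $\Phi<0$ on $(r_0,1)$. Two routes seem natural: (a)~a power-series analysis in $r^2$ in the spirit of the seven-step argument used in the proof of Theorem~\ref{MT-7/4}, controlling the sign-change pattern of the coefficient sequence of $\Phi$; or (b)~an appeal to Pinelis' generalized monotone form of L'Hospital's rule asserting that, if $f_1'/f_2'$ has a single interior extremum, then so does $f_1/f_2$, and iterating on the derivative ratio. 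The chief obstacle is that $p_0$ is defined transcendentally by $\theta_{p_0}=2/\pi$, so none of its powers clear rationally; the series coefficients of $\Phi$ depend on $p_0$ nonlinearly and do not admit the clean polynomial-positivity arguments that succeeded at the rational values $p=7/4$ and $p=2$ in Sections~3--4. Closing the proof therefore appears to require either an interval-arithmetic certificate or a structural identity linking $\theta_p$ to the series coefficients of $\Phi$, which is precisely why the statement is offered only as a conjecture.
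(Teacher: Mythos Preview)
The statement is a \emph{conjecture} in the paper; the author provides no proof at all, only posing it as an open problem in the final lines. So there is no ``paper's own proof'' to compare against, and you correctly recognize this in your closing sentence.

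Your supporting analysis is sound as far as it goes. Part (i) is right: the choice $p+q=9/2$ forces the $r^2,r^4,r^6$ terms to match, so $H(0^+)=1$, and the sign of the $r^8$ coefficient from (\ref{E-S_p}) is indeed positive when $7/4<p_0<11/4$; at the other endpoint the defining relation $\theta_{p_0}=2/\pi$ gives $H(1^-)=1$, and your asymptotic $S_{9/2-p_0,p_0}(1,r')=\theta_{p_0}+\frac{\theta_{p_0}}{9/2-2p_0}\,r'^{p_0}+o(r'^{p_0})$ with $p_0<2$ correctly shows $H>1$ near $r=1$. Part (iii) is also correct: by Remark~\ref{P3-C} (with $c=9/4$) the map $p\mapsto\theta_p$ is strictly increasing on $(0,9/4)$, so any $p<p_0$ makes $\lim_{r\to1^-}S_{9/2-p,p}(1,r')=\theta_p<2/\pi$, and the proposed upper bound fails near $r=1$.

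The gap is exactly where you place it: step (ii). Neither your route (a) nor (b) can be completed with the tools of the paper, because the exponent $p_0$ is a transcendental solution of $\theta_p=2/\pi$ and the coefficient manipulations in Sections~3--4 relied essentially on $p$ being rational (so that binomial expansions of $r'^{\,p}$ and $r'^{\,9/2-p}$ produce Pochhammer symbols amenable to Lemmas~\ref{L-g(x+a)/g(x+1)>}--\ref{L-P-zp}). Your diagnosis of the obstruction matches the author's implicit one; the conjecture remains open.
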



\begin{thebibliography}{99}
\bibitem{Bowman-IEFA-NY-1961} F. Bowman, \emph{Introduction to Elliptic
Functions with Applications}, Dover Publications, New York, 1961.

\bibitem{Byrd-HEIES-NY-1971} P.F. Byrd, M.D. Friedman, \emph{Handbook of
Elliptic Integrals for Engineers and Scientists}, Springer-Verlag, New York,
1971.

\bibitem{Anderson-CA-14-1998} G.D. Anderson, S.-L. Qiu, and M.K.
Vamanamurthy, Elliptic integral inequalities, with applications, \emph{%
Constr. Approx.} \textbf{14} (1998), 195--207.

\bibitem{Anderson-PJM-192-2000} G.D. Anderson, S.-L. Qiu, and M.K.
Vamanamurthy, M. Vuorinen, Generalized elliptic integrals and modular
equations, \emph{Pacific J. Math.} \textbf{192} (2000), 1--37.

\bibitem{Anderson-IJM-62-1988} G.D. Anderson, M.K. Vamanamurthy, and M.
Vuorinen, Distortion functions for plane quasiconformal mappings, \emph{%
Israel J. Math.} \textbf{62} (1988), 1--16.

\bibitem{Baricz-LNM-1994} \'{A}. Baricz, \emph{Generalized Bessel functions
of the first kind}, Lecture Notes in Math. vol. 1994, Springer-Verlag,
Berlin, 2010.

\bibitem{Baricz-EM-2011} \'{A}. Baricz, S. Ponnusamy, and M. Vuorinen,
Functional inequalities for modified Bessel functions, \emph{Expo. Math.}
(2011), doi:10.1016/j.exmath.2011.07.001.

\bibitem{Barnard-JMA-31-2000} R.W. Barnard, K. Pearce, and K.C. Richards, An
inequality involving the generalized hypergeometric function and the arc
length of an ellipse, \emph{SIAM J. Math. Anal.} \textbf{31} (2000),
693--699.

\bibitem{Barnard-JMA-32-2000} R.W. Barnard, K. Pearce, and K.C. Richards, A
monotonicity property involving $_{3}F_{2}$ and comparisons of the classical
approximations of elliptical arc length, \emph{SIAM J. Math. Anal.} \textbf{%
32} (2000), 403--419.

\bibitem{Barnard-JMAA-260-2001} R.W. Barnard, K. Pearce, and L. Schovanec,
Inequalities for the perimeter of an ellipse, \emph{J. Math. Anal. Appl.} 
\textbf{260} (2001), 295--306.

\bibitem{Qiu-ACS-43-2000} S.-L. Qiu, Gr\"{o}zsch ring and Ramanujan's
modular equations, \emph{Acta Math. Sin.} \textbf{43} (2000), 283--290. (in
Chinese)

\bibitem{Qiu-JIA-4-1999} S.-L. Qiu, M.K. Vamanamurthy, and M. Vuorinen, Some
inequalities for the Hersch--Pfluger distortion function, \emph{J. Inequal.
Appl.} \textbf{4} (1999), 115--139.

\bibitem{Vuorinen-SM-121-1996} M. Vuorinen, Singular values, Ramanujan
modular equations, and Landen transformations, \emph{Studia Math.} \textbf{%
121} (1996), 221--230.

\bibitem{Muir-MM-12-1883} T. Muir, On the perimeter of an ellipse, \emph{%
Mess. Math.} 12 (1883) 149--151.

\bibitem{Vuorinen-PFDE-1988} M. Vuorinen, \emph{Hypergeometric functions in
geometric function theory}, in: Special Functions and Differential
Equations, Proceedings of a Workshop held at The Institute of Mathematical
Sciences, Madras, India, January 13--24, 1997, Allied Publ., NewDelhi, 1998,
pp. 119--126.

\bibitem{Qiu-JHIEE-3-1997} S.-L. Qiu and J.-M. Shen, On two problems
concerning means, \emph{J. Hangzhou Inst. Electr. Eng.} \textbf{17} (1997),
no. 3, 1--7. (in Chinese)

\bibitem{Qiu-JHIEE-20(1)-2000} S.-L. Qiu, The Muir mean and the complete
elliptic intergral of the second kind, \emph{J. Hangzhou Inst. Electr. Eng.} 
\textbf{20} (2000), no. 1, 28--33. (in Chinese)

\bibitem{Alzer-JCAM-17(2)-2004} H. Alzer and S.-L. Qiu, Monotonicity
theorems and inequalities for the complete elliptic integrals, \emph{J.
Comput. Appl. Math.} \textbf{172} (2004), no. 2, 289--312.

\bibitem{Chu-RM-61-2012} Y.-M. Chu, M.-K. Wang, Optimal Lehmer mean bounds
for the Toader mean, \emph{Results Math.} \textbf{61} (2012), 223--229.

\bibitem{Chu-CMA-63-2012} Y.-M. Chu, M.-K. Wang, S.-L. Qiu, and Y.-P. Jiang,
Bounds for complete elliptic integrals of the second kind with applications, 
\emph{Computers and Mathematics with Applications} \textbf{63} (2012),
1177--1184.

\bibitem{Wang-JAT-164-2012} M.-K. Wang, Y.-M. Chu, S.-L. Qiu, and Y.-P.
Jiang, Bounds for the perimeter of an ellipse, \emph{J. Approx. Theory} 
\textbf{164} (2012), 928--937.

\bibitem{Wang-JMAA-402-2013} M.-K. Wang, Y.-M. Chu, Asymptotical bounds for
complete elliptic integrals of the second kind, \emph{J. Math. Anal. Appl. }%
\textbf{402} (2013), 119--126.

\bibitem{Hua-F-28(4)-2014} Y. Hua and F. Qi, The best bounds for Toader mean
in terms of the centroidal and arithmetic means, \emph{Filomat} \textbf{28}
(2014), no. 4, 775--780.

\bibitem{Kazi-JAT-146-2007} H. Kazi, E. Neuman, Inequalities and bounds for
elliptic integrals, \emph{J. Approx. Theory} \textbf{146} (2007), 212--226.

\bibitem{Guo-MIA-14-2011} B.-N. Guo and F. Qi, Some bounds for the complete
elliptic integrals of the first and second kinds. \emph{Math. Inequal. Appl.}
\textbf{14} (2011), no. 2, 323--334

\bibitem{Wang-AML-24-2011} M.-K. Wang, Y.-M. Chu, Y.-F. Qiu, and S.-L. Qiu,
An optimal power mean inequality for the complete elliptic integrals, \emph{%
Appl. Math. Lett.} \textbf{24} (2011) 887--890.

\bibitem{Chu-PIASMS-121-2011} Y.-M. Chu, M.-K. Wang, and S.-L. Qiu, Optimal
combinations bounds of root-square and arithmetic means for Toader mean, 
\emph{Proc. Indian Acad. Sci. Math. Sci.} \textbf{121} (2011), 495--501.

\bibitem{Chu-AAA-11-2012} Y.-M. Chu and M.-K. Wang, Inequalities between
arithmetic-geometric, Gini, and Toader means, \emph{Abstr. Appl. Anal.} 
\textbf{2012} (2012), Art. ID 830585, pages.

\bibitem{Chu-JMAA0395-2012} Y.-M. Chu, M.-K. Wang, Y.-P. Jiang, and S.-L.
Qiu, Concavity of the complete elliptic integrals of the second kind with
respect to H\"{o}der means, \emph{J. Math. Anal. Appl.} \textbf{395} (2012),
637--642

\bibitem{Chu-JMI-7(2)-2013} Y.-M. Chu, M.-K. Wang and X.-Y. Ma, Sharp bounds
for Toader mean in terms of contraharmonic mean with applications, \emph{J.
Math. Inequal.} \textbf{7} (2013), no. 2, 161--166, doi:10.7153/jmi-07-15.

\bibitem{Song-JMI-7(4)-2013} Y.-Q. Song, W.-D. Jiang, Y.-M. Chu and D.-D.
Yan, Optimal bounds for Toader mean in terms of arithmetic and
contraharmonic means, \emph{J. Math. Inequal.} \textbf{7} (2013), no. 4,
751--777, doi:10.7153/jmi-07-68.

\bibitem{Li-JFSA-2013-394194} W.-H. Li and M.-M. Zheng, Some inequalities
for bounding Toader mean, \emph{J. Func. Spaces Appl.} \textbf{2013} (2013),
Art. ID 394194, 5 pages, http://dx.doi.org/10.1155/2013/394194.

\bibitem{Hua-PIAS(MS)-124(4)-2014} Y. Hua and F. Qi, A double inequality for
bounding Toader mean by the centroidal mean, \emph{Proc. Indian Acad. Sci.
(Math. Sci.)} \textbf{124} (2014), no. 4, 527--531.

\bibitem{Yin-AME-14-2014} L. Yin and F. Qi, Some inequalities for complete
elliptic integrals, \emph{Applied Mathematics E-Notes} \textbf{14} (2014),
193-199, Available free at mirror sites of http://www.math.nthu.edu.tw/%
\symbol{126}amen/.

\bibitem{Stolarsky-MM-48-1975} K.B. Stolarsky, Generalizations of the
logarithmic mean, \emph{Math. Mag.} \textbf{48 }(1975), 87-92.

\bibitem{Leach-AMM-85-1978} E.B. Leach and M. Sholander, Extended mean
values, \emph{Amer. Math. Monthly} \textbf{85 }(1978), 84-90.

\bibitem{Pales-JMAA-131-1988d} Zs. P\'{a}les, Inequalities for differences
of powers, \emph{J. Math. Anal. Appl.} \textbf{131 }(1988), 271-281.

\bibitem{Qi-PAMS-130(6)-2002} F. Qi , Logarithmic convexities of the
extended mean values, \emph{Pro. Ame. Math. Soc.} \textbf{130 }(2002), no.
6, 1787--1796. 545-548..

\bibitem{Yang-JIA-149286-2008} Zh.-H. Yang, On the monotonicity and
log-convexity of a four-parameter homogeneous mean, \emph{J. Inequal. Appl.} 
\textbf{2008} (2008), Art. ID 149286, 12 pages, doi:10.1155/2008/149286,
Available online at \href{http://www.hindawi.com/GetArticle.aspx?doi=10.1155/2008/149286}%
{http://www.hindawi.com/GetArticle.aspx?doi=10.1155/2008/149286}.

\bibitem{Yang-IJMA-4(34)-2010} Zh.-H. Yang, H\"{o}lder, Chebyshev and
Minkowski Type Inequalities for Stolarsky Means, \emph{Int. Journal of Math.
Analysis} \textbf{4 }(2010), no. 34, 1687-1696a Available online at
http://www.m-hikari.com/ijma/ijma-2010/ijma-33-36-2010/yangIJMA33-36-2010.pdf.

\bibitem{Losonczi-PMD-75(1-2)-2009} L. Losonczi, Ratio of Stolarsky means:
monotonicity and comparison, \emph{Publ. Math. Debrecen} \textbf{75 }(1-2)
(2009), Art. 18.

\bibitem{Yang-JISF-1(1)-2010} Zh.-H. Yang, Log-convexity of ratio of the
two-parameter symmetric homogeneous functions and an application, \emph{J.
Inequal. Spec. Func}. \textbf{1 }(2010), no. 1, 16-29; available online at
http://www.ilirias.com.

\bibitem{Yang-IJMMS-2012-540710} Zh.-H. Yang, The monotonicity results for
the ratio of certain mixed means and their applications, \emph{Int. J. Math.
Math. Sci}. \textbf{2012} (2012), Art. ID 540710, 13 pages,
doi:10.1155/2012/540710.

\bibitem{Yang-NKMS-49(1)-2012} Zh.-H. Yang, The log-convexity of another
class of one-parameter means and\ its applications, \emph{Bull. Korean Math.
Soc.} \textbf{49} (2012), no. 1, 33--47.

\bibitem{Yang-arxiv-1408.2252} Zh.-H. Yang, Bivariate log-convexity of the
more extended means and its applications, arXiv:1408.2252 [math.CA].

\bibitem{Pinelis-JIPAM-3(1)-2002-5} I. Pinelis, L'Hospital type rules for
monotonicity, with applications, \emph{J. Inequal. Pure Appl. Math.} \textbf{%
3} (2002), no. 1, Ar. 5, Available online at
http://jipam.vu.edu.au/v3n1/010\_01.html).

\bibitem{Anderson-AMM-113(9)-2006} G. D. Anderson, M. Vamanamurthy and M.
Vuorinen, Monotonicity rules in calculus, \emph{Amer. Math. Monthly} \textbf{%
113} (2006), no. 9, 805--816.

\bibitem{Yang-arxiv-1409.6408} Zh.-H. Yang, A new way to prove L'Hospital
Monotone Rules with applications, arXiv:1409.6408 [math.CA], Available
online at http://arxiv-web3.library.cornell.edu/pdf/1409.6408v1.pdf.

\bibitem{Biernacki-AUMC-S-9-1955} M. Biernacki and J. Krzyz, On the
monotonicity of certain functionals in the theory of analytic functions, 
\emph{Annales Universitatis Mariae Curie-Sklodowska}, \textbf{9} (1955),
135--147.

\bibitem{Yanga-JMAA-428-2014} Zh.-H. Yang, Y.-M. Chu and M.-K. Wang,
Monotonicity criterion for the quotient of power series with applications, 
\emph{J. Math. Anal. Appl.} \textbf{428} (2015), 587--604, Available online
at http://dx.doi.org/10.1016/j.jmaa.2015.03.043.

\bibitem{Wendel-AMM-55-1948} J. Wendel, Note on the gamma function, \emph{%
Amer. Math. Monthly} \textbf{55 }(1948), 563-564.

\bibitem{Qi-JIA-2010-493058} F. Qi, Bounds for the ratio of two gamma
functions, J. Inequal. Appl. 2010 (2010), Art. ID 493058, 84 pages,
doi:10.1155/2010/493058.

\bibitem{Ivady-JMI-3(2)-2009} P. Iv\'{a}dy, A note on a gamma function
inequality, \emph{J. Math. Inequal.} \textbf{3} (2009), no. 2, 227--236.

\bibitem{Batir-AM-91-2008} N. Batir, Inequalities for the gamma function, 
\emph{Arch. Math.} \textbf{91} (2008) 554--563.

\bibitem{Yang-AAA-702718-AAA} Zh.-H. Yang, Y.-M. Chu, and X.-J. Tao, A
double inequality for the trigamma function and its applications, \emph{%
Abstract and Applied Analysis} \textbf{2014}\ (2014), Art. ID 702718, 9
pages, http://dx.doi.org/10.1155/2014/702718.

\bibitem{Yang-IJMMS-591382-2009} Zh.-H. Yang, Some monotonictiy results for
the ratio of two-parameter symmetric homogeneous functions, \emph{Int. J.
Math. Math. Sci}. \textbf{2009} (2009), Art. ID 591382, 12 pages, 2009.
doi:10.1155/2009/591382, Available online at \href{http://www.hindawi.com/journals/ijmms/2009/591382.html}%
{http://www.hindawi.com/journals/ijmms/2009/591382.html}.

\bibitem{Toader-JMAA-218(2)-1998} Gh. Toader, Some mean values related to
the arithmetic-geometric mean, \emph{J. Math. Anal. Appl.} \textbf{218}
(1998), no. 2, 358--368.
\end{thebibliography}
\end{document}